\newtheorem{theo}{Theorem}[section]
\newtheorem{example}[theo]{Example}
\newtheorem{lemma}[theo]{Lemma}
\newtheorem{corollary}[theo]{Corollary}
\newtheorem{prop}[theo]{Proposition}
\newcommand {\ZZ} {\mathbb {Z}}
\newcommand {\CC} {\mathbb {C}}
\newcommand {\End} {\mathrm{End}}
\newcommand {\Top} {\mathrm{top}}
\newcommand {\Ker} {\mathrm{Ker}}
\newcommand {\Id} {\mathrm{Id}}
\newcommand {\card} {\mathrm {card}}
\newcommand{\hh}{\mathfrak{h}}
\newcommand{\cc}{\mathfrak{c}}
\renewcommand{\gg}{\mathfrak{g}}
\newcommand{\bb}{\mathfrak{b}}
\renewcommand{\ll}{\mathfrak{l}}
\renewcommand {\phi} {\varphi}
\newcommand{\refth}[1]{Theorem \ref{#1}}
\newcommand{\refex}[1]{Example \ref{#1}}
\newcommand{\refle}[1]{Lemma \ref{#1}}
\newcommand{\refcor}[1]{Corollary \ref{#1}}
\newcommand{\refprop}[1]{Proposition \ref{#1}}
\newcommand{\refeq}[1]{(\ref{#1})}
\renewcommand{\Im}{\mathrm{Im}}
\newcommand{\im}{\mathrm{Im}}
\renewcommand{\span}{\mathrm{span}}
\renewcommand{\sl}{\mathrm{sl}}
\newcommand{\ext}{\mathrm{Ext}}
\newcommand{\fin}{\mathrm{fin}}
\newcommand{\ds}{\displaystyle}
\def\cplus{\hbox{$\subset${\raise0.3ex\hbox{\kern -0.55em ${\scriptscriptstyle +}$}}\ }}
\def\clplus{\hbox{$\subset${\raise0.3ex\hbox{\kern -0.55em ${\scriptscriptstyle +}$}}\ }}
\def\crplus{\hbox{$\supset${\raise1.05pt\hbox{\kern -0.55em ${\scriptscriptstyle +}$}}\ }}
\newcommand{\Int}{\mathrm{Int}}
\newcommand{\Intwt}{\Int^{\mathrm{wt}}_{\gg,\hh}}
\newcommand{\Intfin}{\Int^{\mathrm{fin}}_{\gg,\hh}}
\newcommand{\tens}{\mathrm{Tens}}
\newcommand{\lind}{\widetilde{\mathrm{Tens}}}
\newcommand{\soc}{\mathrm{soc}}
\renewcommand{\hom}{\mathrm{Hom}}
\begin{document}

\title{Categories of integrable $sl(\infty)$-, $o(\infty)$-, $sp(\infty)$-modules \\}

\author{Ivan Penkov}

\address{Jacobs University Bremen, School of Engineering and Science,
Campus Ring 1, 28759 Bremen, Germany}

\email{i.penkov@jacobs-university.de}

\author{Vera Serganova}

\address{Department of Mathematics, University of California
Berkeley, Berkeley CA 94720, USA}

\email{serganov@math.berkeley.edu}

\subjclass{Primary 17B65, 17B10}

\keywords{Locally finite Lie algebra, tensor representation, integrable module, Loewy length, semisimplicity, injective hull, injective resolution.}

\date{June 3, 2010}

\begin{abstract}
We investigate several categories of integrable $\sl(\infty)$-, $o(\infty)$-, $sp(\infty)$-modules. In particular, we prove that the category of integrable $\sl(\infty)$-, $o(\infty)$-, $sp(\infty)$-modules with finite-dimensional weight spaces is semisimple. The most interesting category we study is the category $\lind_\gg$ of tensor modules. Its objects $M$ are defined as integrable modules of finite Loewy length such that the algebraic dual $M^*$ is also integrable and of finite Loewy length.

We prove that the simple objects of $\lind_\gg$ are precisely the simple tensor modules, i.e. the simple subquotients of the tensor algebra of the direct sum of the natural and conatural representations.

We also study injectives in $\lind_\gg$ and compute the Ext$^1$'s between simple modules. Finally, we characterize a certain subcategory $\tens_\gg$ of $\lind_\gg$ as the unique minimal abelian full subcategory of the category of integrable modules which contains a non-trivial module and is closed under tensor product and algebraic dualization.
\end{abstract}

\maketitle
\section{Introduction}

The category of finite-dimensional representations of a Lie
algebra is endowed with a natural contravariant involution
\begin{equation}\label{eq1}
M\rightsquigarrow M^{*},
\end{equation}
where $^*$ indicates dual space. For categories of
infinite-dimensional modules $(1)$ is never an
involution as $M\not\simeq M^{**}$. This is why one usually looks
for a ``restricted dual'' or a ``continuous dual'' which might
still yield a contravariant involution on a given category of
infinite-dimensional modules. In this paper we study two
categories of infinite-dimensional modules of certain
infinite-dimensional Lie algebras and show, in particular, that
there exists an interesting category $\lind_\gg$ of
infinite-dimensional representations on which the functor (1) of
algebraic dualization is well-defined and preserves the property
of a module to be of finite Loewy length. 

More precisely, we study representations of locally finite Lie
algebras, i.e. of direct limits of finite-dimensional Lie
algebras. There are three well-known classical simple locally
finite Lie algebras $sl(\infty), o(\infty), sp(\infty)$, each of
them being defined by an obvious direct limit. None of these Lie
algebras admits non-trivial finite-dimensional representations,
and instead one studies integrable representations (the definition
see in section 2 below). However, the category of integrable
$\gg$-modules is vast (and ``wild'' in the technical sense), so it
is reasonable to look for interesting subcategories.

One subcategory we study is the category of integrable weight
modules with finite-dimensional weight spaces, and this is obviously an
analog of the category of finite-dimensional representations of
a classical finite-dimensional Lie algebra. It is less obvious
that for $\ds \gg=sl(\infty)$ this category contains some rather
interesting simple modules which are not highest weight modules.
The first main result of this paper is the proof of the
semisimplicity of this category: an extension of Hermann Weyl's
semisimplicity theorem to the classical Lie algebras $sl(\infty),
o(\infty), sp(\infty)$.

The above category is clearly not the only reasonable
generalization of the category of finite-dimensional
representations, as for instance it does not contain the adjoint
representation. Indeed, note that the adjoint representation has
an infinite-dimensional weight space, the Cartan subalgebra
itself. On the other hand, the adjoint representation is naturally
a simple tensor module as defined in [PS]. More generally, we
define the category $\lind_\gg$ for $\gg\cong
sl(\infty),\,o(\infty),\,sp(\infty)$ simply as the largest
category of integrable $\gg$-modules which is closed under
algebraic dualization and such that every object has finite Loewy
length. This category is a (non-rigid) tensor category with respect
to the usual tensor product. 

The second main contribution of the
present paper is the study of the category $\lind_\gg$. In particular,
we study injectives in $\lind_\gg$ and compute the Ext$^1$'s between
simple modules. We also
give an alternative characterization of $\lind_\gg$ by proving
that an integrable $\gg$-module is an object of $\tens_\gg$
if and only if it has finite Loewy length and admits only finitely
many non-isomorphic simple subquotients each of which is a submodule
of a suitable finite tensor product of natural and conatural modules.
Finally, we describe a certain subcategory $\tens_\gg$ of $\lind_\gg$
as the unique minimal abelian full subcategory of the category of
integrable modules which contains a non-trivial module and is closed
under tensor product and algebraic dualization.

\textbf{Acknowledgement.} We thank Gregg Zuckerman for his
supportive interest and constructive criticism of this project.
We thank also A. Baranov for pointing out the connection of
Proposition 4.3 to his work.
Both authors acknowledge partial support through DFG Grants PE
980/2-1 and PE 980/3-1, and the second author aknowledges partial
support through NSF grant 0901554.

\section{Basic definitions}

\hspace{0.5 cm}The ground field is $\mathbb{C}$ and $\otimes$
stands for $\otimes_\CC$. If $\mathcal C$ is a category, $C\in
\mathcal C$ indicates that $C$ is an object of $\mathcal C$. If
$P$ is a set, we denote by $2^P$ the power set of $P$. We recall
that the cardinal numbers $\beth_n$ are defined inductively:
$\beth_0=\card\,\ZZ$, $\beth_1=\card\,2^\ZZ$,
$\beth_n=\card\,2^{P_{n-1}}$, where $P_{n-1}$ is a set of
cardinality $\beth_{n-1}$.

In this paper $\gg$ stands for a \textit{locally semisimple}
(complex) Lie algebra. By definition,
$\gg=\bigcup_{i\in\mathbb{Z}_{>0}}\gg_i$ where
\begin{equation}\label{eq2}
\gg_1\subset\gg_2\subset\gg_3\subset\dots
\end{equation}
 is a sequence of inclusions of semisimple finite-dimensional Lie
 algebras. We call the sequence (2) an {\it exhaustion} of $\gg$, and we will assume that it is fixed.
 A locally semisimple Lie algebra is \textit{locally simple} if it
 admits an exhaustion (2) so that all $\gg_i$ are simple. It is clear that a locally simple Lie algebra is
 simple. If no restrictions on $\gg$ are clearly stated, in what
 follows $\gg$ is assumed to be an arbitrary locally semisimple
 Lie algebra.

 A locally simple algebra $\gg$ is \textit{diagonal} if an exhaustion (2) can be chosen so that all $\gg_i$ are classical simple Lie algebras and the natural representation $V_i$ of $\gg_i$, when restricted to $\gg_{i-1}$, has the form $k_iV_{i-1}\oplus l_iV_{i-1}^*\oplus\mathbb{C}^{s_i}$ for some $k_i, l_i$ and $s_i\in\mathbb{Z}_{\geq 0}$. Here $V_{i-1}$ stands for the natural representation of $\gg_{i-1}$, $\mathbb{C}^{s_i}$ stands for the trivial module of dimension $s_i$, and $\ds k_iV_{i-1}$ (respectively, $l_iV_{i-1}^*$) denotes the direct sum of $k_i$(respectively, $l_i$) copies of $V_{i-1}$ (respectively, $V_{i-1}^*$).

The three classical simple Lie algebras $sl(\infty), o(\infty)$
and $sp(\infty)$ (defined respectively as $\displaystyle
 sl(\infty)=\cup_i sl(i)$,  $\displaystyle o(\infty)=\cup_i
 o(i)$, $\ds sp(\infty):=\cup_isp(2i)$ via the natural inclusions
 $sl(i)\subset sl(i+1))$ etc.) are clearly diagonal. Moreover,
 $sl(\infty), o(\infty),sp(\infty)$ are (up to isomorphism) the only finitary locally simple Lie algebras $\gg$; \textit{finitary}
means
 by definition that $\gg$ admits a faithful countable-dimensional
 $\gg$-module with a basis in which each element $g\in\gg$ acts
 through a finite matrix,  [Ba1], [Ba3].  More generally, there exists
 also a classification of locally simple diagonal Lie algebras up to
 isomorphism, [BZh]. We do not use this classification in the present
 paper and present only the simplest example of a diagonal Lie algebra
 not isomorphic to $sl(\infty), o(\infty)$ or $sp(\infty)$. This is
 the Lie algebra $sl(2^{\infty})$ defined as the direct limit
 $\ds\lim_{\to}$ $  sl(2^i)$ under the inclusions

\[ sl(2^i) \rightarrow sl(2^{i+1}), A\rightarrow\left( \begin{array}{cc}
A & 0  \\
0 & A  \end{array} \right).\]

A $\gg$-module $M$ is \textit{integrable} if 
$\dim\span\{m, g\cdot m,g\cdot m^2,\cdots\}<\infty$ for any $m\in M$ and $g\in\gg$. 
Since $\gg$ is locally semisimple, this is equivalent to the condition that,
 when restricted to any semisimple finite-dimensional subalgebra $\mathfrak{f}$ of $\gg$, $M$ is isomorphic to a (not necessarily countable) direct sum of finite-dimensional $\mathfrak{f}$-modules. We denote by $\Int_\gg$ the category of integrable $\gg$-modules; $\Int_\gg$
is a full subcategory of the category of $\gg$-modules $\gg$-mod.

Any countable-dimensional $\gg$-module $M\in\Int_{\gg}$ can be
exhausted by finite dimensional $\gg_i$-modules $M_i$, i. e. there
exists a chain of finite-dimensional $\gg_i$-submodules
$M_1\subset M_2\subset\dots$ such that $\ds
M=\underrightarrow{\lim}M_i$. We call $M$ {\it locally simple} if
all $M_i$ can be chosen to be simple modules. It is clear that a
locally simple module is simple. Note also that if $M$ is locally
simple then any two exhaustions $\{M_i\}$ and $\{M'_i\}$ coincide
from some point on: that follows from the fact that $M_i\cap
M'_i\neq 0$ for some $i$ and hence $M_j=M'_j=M_j\cap M'_j$ for any
$j\geq i$. We say that a locally simple $\gg$-module $\ds
M=\underrightarrow{\lim}M_i$ is a \emph{highest weight module} if
there is a chain of nested Borel subalgebras $\bb_i$ of $\gg_i$
such that the $\bb_i$-highest weight space of $M_i$ is mapped into
the $\bb_{i+1}$-highest weight space of $M_{i+1}$ under the
inclusion $M_i\subset M_{i+1}$. The direct limit of highest weight
spaces is then the $\bb$-\emph{highest weight space of} $M$, where
$\ds \bb=\underrightarrow{\lim}\bb_i$.

By $$\ds\Gamma_{\gg}:\gg-\mathrm{mod} \rightsquigarrow\Int_\gg,$$
$$ M\mapsto\Gamma_{\gg}(M):=\{m\in M, \dim\span\{m, g\cdot m,
g\cdot m^2,\cdots\}<\infty \hspace{0.5 cm} \forall g\in\gg \}$$ we
denote the \textit{functor of $\gg$-integrable vectors}. It is an
exercise to check that $\Gamma_{\gg}(M)$ is indeed a well-defined
$\gg$-submodule of $M$; the fact that $\Gamma_{\gg}(M)$ is
integrable is obvious. Furthermore, $\Gamma_{\gg}$ is a left-exact
functor.

If $\gg$ is a diagonal (locally simple) Lie algebra, then one can
define a {\it natural module} $V$ of $\gg$. Indeed, the reader
will verify that one can choose a subexhaustion of \refeq{eq2}
such that the natural $\gg_i$-module $V_i$ is a $\gg_i$-submodule
of $V_{i+1}$ for any $i$. Therefore, fixing arbitrary injective
homomorphisms $V_i\to V_{i+1}$ of $\gg_i$-modules, we obtain a
direct system and we set $V:=\underrightarrow{\lim}V_i$. Note that
$V$ depends on the choice of the homomorphisms $V_i\to V_{i+1}$.
If however, $\gg\cong sl(\infty),\,o(\infty),\,sp(\infty)$, then
the homomorphisms $V_i\to V_{i+1}$ are unique up to
proportionality, and one can prove that as a result $V$ is unique
up to isomorphism, i.e. in particular does not depend on the fixed
exhaustion of $\gg$. In these latter cases we speak about
\emph{the natural representation}.

By choosing injective homomorphisms of $\gg_i$-modules $V^*_i\to
V^*_{i+1}$, we obtain a direct system defining a {\it conatural
representation} of $\gg$. We denote such a representation by
$V_*$. For $\gg\cong sl(\infty),\,o(\infty),\,sp(\infty)$ $V_*$ is
unique up to isomorphism. In fact, $V\simeq V_*$ for $\gg\cong
o(\infty),\,sp(\infty)$.

\section{Injective modules in $\Int_\gg$ and semisimplicity of the
  category $\Intwt$}

\begin{prop}\label{ext}
$\ext^1_\gg (X,M^*)=0$ for any $X,M\in \Int_\gg$.
\end{prop}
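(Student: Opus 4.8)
The plan is to translate the statement into the vanishing of a single Lie algebra cohomology group and then to compute that group using the exhaustion $\gg=\bigcup_i\gg_i$. Since the canonical map $U(\gg)\otimes_\CC X\to U(\gg)\otimes_\CC X^{\mathrm{triv}}$ is an isomorphism of $\gg$-modules, tensoring a projective resolution of the trivial module by $X$ produces a projective resolution of $X$; together with tensor--hom adjunction this gives the standard identification $\ext^1_\gg(X,N)\cong H^1(\gg,\hom_\CC(X,N))$ for every $\gg$-module $N$. Taking $N=M^*$ and using the $\gg$-module isomorphism $\hom_\CC(X,M^*)\cong(X\otimes M)^*$, we reduce to proving $H^1(\gg,Y^*)=0$, where $Y:=X\otimes M$. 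Here $Y\in\Int_\gg$, because a tensor product of two modules which restrict to direct sums of finite-dimensional modules over each $\gg_i$ again has this property; on the other hand $Y^*$ itself need not be integrable, so the Ext is understood in $\gg$-mod. Thus it suffices to show $H^1(\gg,Y^*)=0$ for an arbitrary $Y\in\Int_\gg$.

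For this I would patch cocycles along the exhaustion. Restricting $Y^*$ to $\gg_i$ gives a product $\prod_\alpha Y_\alpha^*$ of finite-dimensional $\gg_i$-modules; since the Chevalley--Eilenberg cochain functor $\hom_\CC(\Lambda^\bullet\gg_i,-)$ and the passage to cohomology both commute with arbitrary products, $H^1(\gg_i,Y^*)\cong\prod_\alpha H^1(\gg_i,Y_\alpha^*)=0$ by Whitehead's first lemma. Hence any $1$-cocycle $c\colon\gg\to Y^*$ satisfies $c|_{\gg_i}=\delta\psi_i$ for some $\psi_i\in Y^*$, and comparing the relations for $\gg_i\subset\gg_{i+1}$ shows $\psi_i-\psi_{i+1}\in(Y^*)^{\gg_i}$, where $(Y^*)^{\gg_1}\supseteq(Y^*)^{\gg_2}\supseteq\cdots$. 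To conclude that $c$ is a coboundary one must correct the $\psi_i$ by elements of $(Y^*)^{\gg_i}$ so that $\psi_i$ becomes independent of $i$; the obstruction is exactly $\varprojlim^1_i(Y^*)^{\gg_i}$.

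The heart of the matter is therefore the vanishing of this $\varprojlim^1$, and this is precisely where the \emph{algebraic} dual, rather than a restricted or continuous dual, is essential. One has $(Y^*)^{\gg_i}=(Y/\gg_iY)^*$, so the inverse system $\{(Y^*)^{\gg_i}\}_i$ is the image under $\hom_\CC(-,\CC)$ of the direct system of coinvariants $\{Y/\gg_iY\}_i$. For any $\NN$-indexed direct system $\{B_i\}$ of vector spaces with structure maps $g_i\colon B_i\to B_{i+1}$, the group $\varprojlim^1_i B_i^*$ is the cokernel of $1-\sigma$ acting on $\prod_i B_i^*=(\bigoplus_iB_i)^*$, and $1-\sigma$ is the transpose of the map $d\colon\bigoplus_iB_i\to\bigoplus_iB_i$ sending $(x_i)_i$ to $(x_i-g_{i-1}(x_{i-1}))_i$; this $d$ is injective, since $x_1=0$ forces $x_2=g_1(x_1)=0$ and so on inductively. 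As $\hom_\CC(-,\CC)$ is exact, $\varprojlim^1_iB_i^*=(\ker d)^*=0$, and applying this with $B_i=Y/\gg_iY$ finishes the proof. I expect the only delicate point to be exactly this last step: the ``telescoping'' corrections $\sum_{j\ge i}(\psi_j-\psi_{j+1})$ one wishes to add to $\psi_i$ need not lie in $Y^*$ term by term, but the corresponding sums of functionals on $\bigoplus_i Y/\gg_iY$ are finite in every coordinate, and this finiteness is what makes the $\varprojlim^1$ vanish.
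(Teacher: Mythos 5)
Your proposal is correct, and after the opening reduction it takes a genuinely different route from the paper. Both arguments start identically: $\ext^1_\gg(X,M^*)\cong H^1(\gg,\hom_\CC(X,M^*))\cong H^1(\gg,(X\otimes M)^*)$, so everything comes down to $H^1(\gg,R^*)=0$ for an integrable $R$. At this point the paper stays on the homology side: the Chevalley--Eilenberg cochain complex of $R^*$ is the linear dual of the chain complex of $R$, the latter is the direct limit of the chain complexes for the $\gg_i$, so $H_1(\gg,R)=\varinjlim H_1(\gg_i,R)=0$ by Whitehead, and exactness of $\hom_\CC(-,\CC)$ then gives $H^1(\gg,R^*)\cong H_1(\gg,R)^*=0$ in one stroke. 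You instead argue on the cohomology side: you kill $H^1(\gg_i,R^*)$ via the product decomposition and Whitehead, and then face the real obstruction to gluing the local primitives $\psi_i$, namely $\varprojlim^1_i\,(R^*)^{\gg_i}$ (your patching argument is precisely the Milnor sequence $0\to\varprojlim^1 H^0(\gg_i,R^*)\to H^1(\gg,R^*)\to\varprojlim H^1(\gg_i,R^*)$ made explicit, using that the restriction maps of cochain complexes are surjective). Your vanishing lemma for $\varprojlim^1$ of a dual system, via $(R^*)^{\gg_i}=(R/\gg_iR)^*$, the telescope map $d$ being injective, and exactness of $\hom_\CC(-,\CC)$, is correct; note that it is the same exactness-of-duality phenomenon the paper exploits, applied to the direct system of coinvariants rather than to the whole chain complex at once. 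The paper's version is shorter and avoids all $\varprojlim^1$ bookkeeping; yours is longer but isolates exactly where the algebraic dual (as opposed to a restricted dual) enters, by exhibiting the concrete obstruction group and showing it dies for duals of direct systems, which is a useful perspective for the injectivity statements that follow in Section 3.
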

\begin{proof} We use that
$$\ext^1_\gg (X,M^*)=\ext^1_\gg (\mathbb{C},\hom_\mathbb
C(X,M^*))\simeq H^1(\gg,\hom_\mathbb
C(X,M^*))=H^1(\gg,(X\otimes M)^*),$$ see for instance [W].
Therefore it suffices to show
that $H^1(\gg,R^*)=0$ for any integrable $\gg$-module $R$.
Consider the standard complex for the cohomology of $\gg$ with
coefficients in $R^*$:
\begin{equation}
0\to R^* \to (\gg\otimes R)^* \to (\Lambda^2(\gg)\otimes R)^*\to\dots
\end{equation}
It is dual to the standard homology complex
$$0\gets R \gets \gg\otimes R \gets \Lambda^2(\gg)\otimes R \gets \dots,$$
which is the direct limit of complexes
$$0\gets R \gets \gg_i\otimes R \gets \Lambda^2(\gg_i)\otimes R \gets \dots.$$
Since $H_1(\gg_i,R)=0$ for each $i$, we get $H_1(\gg,R)=0$.
Therefore the dual complex (3) has trivial first cohomology, i.e.
$H^1(\gg,R^*)=0$.
\end{proof}

\begin{prop}\label{inj}
For any $M\in\Int_\gg$, $\Gamma_{\gg}(M^*)$ is an injective object of $\Int_\gg$.
\end{prop}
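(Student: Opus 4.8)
The plan is to deduce injectivity of $\Gamma_\gg(M^*)$ from Proposition \ref{ext} together with the fact that $\Gamma_\gg$ is right adjoint to the inclusion functor $\Int_\gg \hookrightarrow \gg\text{-mod}$. First I would record this adjunction: for any $N \in \Int_\gg$ and any $\gg$-module $L$, one has $\hom_\gg(N, \Gamma_\gg(L)) = \hom_\gg(N, L)$, since the image of an integrable module under a $\gg$-homomorphism consists of $\gg$-integrable vectors. Because the inclusion functor is exact, its right adjoint $\Gamma_\gg$ sends injective objects of $\gg\text{-mod}$ to injective objects of $\Int_\gg$; this is a standard homological fact. So it would suffice to show that $\Gamma_\gg(M^*)$ is a direct summand, inside $\Int_\gg$, of $\Gamma_\gg(I)$ for some injective $\gg$-module $I$ — or, more directly, to show $\ext^1_{\Int_\gg}(X, \Gamma_\gg(M^*)) = 0$ for all $X \in \Int_\gg$.

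The cleanest route is the latter. For $X \in \Int_\gg$, the adjunction gives $\hom_\gg(X, \Gamma_\gg(M^*)) = \hom_\gg(X, M^*)$ functorially, and since both $X$ and any integrable extension of $X$ by $\Gamma_\gg(M^*)$ live in $\Int_\gg$, an extension $0 \to \Gamma_\gg(M^*) \to E \to X \to 0$ in $\Int_\gg$ is in particular an extension of $\gg$-modules; pushing out along the inclusion $\Gamma_\gg(M^*) \hookrightarrow M^*$ yields an extension $0 \to M^* \to E' \to X \to 0$ of $\gg$-modules, which splits by Proposition \ref{ext}. A splitting $M^* \to E'$ restricts on $E$... here one must be slightly careful: I would instead argue that the original sequence splits because $E$, being integrable, satisfies $E = \Gamma_\gg(E)$, so a $\gg$-module splitting $X \to E'$ of the pushed-out sequence need not land in $E$, but one can use left-exactness of $\Gamma_\gg$: applying $\Gamma_\gg$ to $0 \to M^* \to E' \to X \to 0$ and using $\Gamma_\gg(M^*) = \Gamma_\gg(M^*)$, $\Gamma_\gg(X) = X$, $\Gamma_\gg(E') \supseteq E$, one recovers $0 \to \Gamma_\gg(M^*) \to \Gamma_\gg(E') \to X$, and the pushout map $E \to E'$ factors through $\Gamma_\gg(E')$; comparing, $E \to \Gamma_\gg(E')$ is an isomorphism onto the preimage of $X$, whence the splitting of the $E'$-sequence pulls back to a splitting of the $E$-sequence.

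Alternatively — and this is perhaps the more robust phrasing I would ultimately write down — I would avoid pushouts and argue directly: $\ext^1_{\Int_\gg}(X, \Gamma_\gg(M^*))$ injects into $\ext^1_\gg(X, \Gamma_\gg(M^*))$ when restriction of extension classes is injective, but it is cleaner to use that $\Gamma_\gg$ preserves injectives and note that $M^*$ need not be injective in $\gg\text{-mod}$; so instead I would use the identification $\ext^1_{\Int_\gg}(X, \Gamma_\gg(M^*)) \cong \ext^1_{\Int_\gg}(X, M^*)$ (valid because $M^*/\Gamma_\gg(M^*)$ has no nonzero integrable vectors, so $\hom_\gg(X, M^*/\Gamma_\gg(M^*)) = 0$ for integrable $X$, and similarly with the connecting map, forcing the long exact sequence to give the iso in degree $1$ after checking the degree-$0$ and the $\ext^1(X, M^*/\Gamma_\gg(M^*))$ term, the latter of which also vanishes since any integrable extension of $X$ maps to zero in the torsion-free-from-integrables quotient). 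Then $\ext^1_{\Int_\gg}(X, M^*)$ maps to $\ext^1_\gg(X, M^*) = 0$ by Proposition \ref{ext}, and this map is injective because a class in $\ext^1_{\Int_\gg}$ that is trivial in $\gg\text{-mod}$ is represented by a $\gg$-split sequence, and a $\gg$-splitting of a sequence of integrable modules is automatically a morphism in $\Int_\gg$. Hence $\ext^1_{\Int_\gg}(X, \Gamma_\gg(M^*)) = 0$ for all $X \in \Int_\gg$, which is exactly injectivity of $\Gamma_\gg(M^*)$ in $\Int_\gg$. The main obstacle is the bookkeeping around the non-integrable quotient $M^*/\Gamma_\gg(M^*)$: one must verify carefully that it contributes nothing to $\hom$ or $\ext^1$ against integrable modules, which rests on the tautology that an integrable module admits no nonzero map to a module all of whose vectors are non-integrable, combined with left-exactness of $\Gamma_\gg$.
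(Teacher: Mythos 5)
Your proposal is correct and takes essentially the same route as the paper: both deduce the statement from Proposition \ref{ext} via the exact sequence $0\to\Gamma_\gg(M^*)\to M^*\to M^*/\Gamma_\gg(M^*)\to 0$ together with the vanishing of $\hom_\gg(X,M^*/\Gamma_\gg(M^*))$ for integrable $X$, your pushout argument being an element-level version of the paper's long exact sequence of $\ext_\gg$-groups. Two small points to tidy up: the quotient $M^*/\Gamma_\gg(M^*)$ receives no nonzero map from an integrable module not because of left-exactness of $\Gamma_\gg$ but because $\Int_\gg$ is closed under extensions (the preimage in $M^*$ of an integrable submodule of the quotient is integrable, hence lies in $\Gamma_\gg(M^*)$), and the expression $\ext^1_{\Int_\gg}(X,M^*)$ in your second formulation is not literally defined since $M^*\notin\Int_\gg$, so one should work with $\ext^1_\gg$ there, as the paper does.
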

\begin{proof}
Let $X\in \Int_\gg$. The exact sequence of $\gg$-modules
$$0\to\Gamma_{\gg}(M^*)\to M^*\to M^*/\Gamma_{\gg}(M^*)\rightarrow 0$$ induces an exact sequence of vector spaces

$$
0\to\hom_{\mathbb{C}}(X,\Gamma_{\gg}(M^*))\stackrel{\phi}{\to}\hom_{\mathbb{C}}(X,M^*)\to
\hom_{\mathbb{C}} (X,M^*/\Gamma_{\gg}(M^*))\to$$
$$
\to\ext^1_{\gg}(X,\Gamma_{\gg}(M^*))\stackrel{\psi}{\to}\ext^1_{\gg}(X,M^*)=0.
$$ Since $\hom_{\mathbb{C}}(X, M^*/\Gamma_{\gg}(M^*))=0$ (this follows from the facts that a quotient of an integrable $\gg$-module is again an integrable $\gg$-module and that $\Int_{\gg}$ is closed with respect to extensions) we conclude that $\psi$ is an isomorphism, i.e. that $\ext^1_{\gg}(X,\Gamma_{\gg}(M^*))=0$. \end{proof}

 \begin{corollary}\label{hull} 
$\Int_\gg$ has enough injectives. 
\end{corollary}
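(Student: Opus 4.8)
The plan is to deduce \refcor{hull} directly from \refprop{inj} together with the standard fact that a module category always has enough injectives once one can embed each object into an injective. First I would observe that for any $M\in\Int_\gg$ there is a canonical $\gg$-module map $M\to M^{**}$, the evaluation map $m\mapsto \mathrm{ev}_m$, and that this map is injective because the pairing between $M$ and $M^*$ is nondegenerate. Since $M$ is integrable, the image of this map lands inside $\Gamma_\gg(M^{**})$: indeed $\Gamma_\gg$ applied to any $\gg$-module returns the largest integrable submodule, and $M$, being integrable, maps into it. Thus we obtain an embedding $M\hookrightarrow \Gamma_\gg(M^{**})$ of $\gg$-modules, and by \refprop{inj} (applied with $M^*$ in the role of ``$M$'', so that $\Gamma_\gg((M^*)^*)=\Gamma_\gg(M^{**})$ is injective in $\Int_\gg$) the target is an injective object of $\Int_\gg$.

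Next I would note that this is precisely the statement that $\Int_\gg$ has enough injectives: every object embeds into an injective one. If one wishes to be pedantic about the definition, one can also record that $\Int_\gg$ is an abelian category — it is closed under subobjects, quotients, and finite direct sums inside $\gg$-mod, and it is closed under extensions (a fact already invoked in the proof of \refprop{inj}) — so the notion of ``enough injectives'' makes sense and is witnessed by the embeddings just constructed.

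The only genuine point requiring care, and the step I expect to be the main (minor) obstacle, is the verification that $\mathrm{ev}\colon M\to M^{**}$ factors through $\Gamma_\gg(M^{**})$; this is immediate from the universal property of $\Gamma_\gg$, namely that any morphism from an integrable module to an arbitrary $\gg$-module has image in the integrable radical, but it is worth stating explicitly since $M^{**}$ itself is typically not integrable. Everything else is formal: the injectivity of the evaluation map is the nondegeneracy of the evaluation pairing, and the injectivity of $\Gamma_\gg(M^{**})$ in $\Int_\gg$ is exactly the content of \refprop{inj}.
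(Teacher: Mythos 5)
Your proposal is correct and follows essentially the same route as the paper: embed $M$ into $M^{**}$ via the (injective) evaluation map, observe that the image lies in $\Gamma_\gg(M^{**})$ since $M$ is integrable, and invoke Proposition \ref{inj} for the injectivity of $\Gamma_\gg(M^{**})$ in $\Int_\gg$. The extra care you take with the factorization through $\Gamma_\gg(M^{**})$ is exactly the point the paper settles by appealing to the definition of $\Gamma_\gg$.
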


\begin{proof}
Let $M\in\Int_\gg$. Then
$M\subset M^{**}$. By the very definition of $\Gamma_\gg$,
$M\subset\Gamma_\gg(M^{**})$, and $\Gamma_\gg(M^{**})$ is an injective
object of $\Int_\gg$ by Proposition 3.2.
\end{proof}

Note that there is a simpler proof of Corollary 3.3 not
referring to Proposition 3.2. Indeed it is enough to notice that
the functor $\Gamma_\gg :\gg$-mod$\rightsquigarrow \Int_\gg$ is right adjoint
to the inclusion functor $\Int_\gg\subset\gg$-mod. Then the
equality $$\hom_\gg(M,J_M)=\hom_\gg(M,\Gamma_\gg(J_M))$$
allows us to conclude that, if $i:M\rightarrow J_M$ is an
injective homomorphism of $M\in\Int_\gg$ into an injective
$\gg$-module, then $\Gamma_\gg(J_M)$ is an injective object
of $\Int_\gg$ and $i$ factors through the inclusion $\Gamma_\gg(J_M)\subset J_M$.
In particular, this argument allows to reduce the existence of
injective hulls in $\Int_\gg$ to the well-known existence of
injective hulls in $\gg$-mod.

With this in mind, we can view Propositions 3.1 and 3.2 as yielding
an explicit construction of an injective module $\Gamma_\gg(M^*)$
associated to any $M\in\Int_\gg$.

In the rest of this section we assume that $\gg$ admits a
splitting Cartan subalgebra $\hh\subset\gg$, i.e. an abelian
subalgebra $\hh\subset\gg$ such that $\gg$ decomposes as
$$\hh\oplus\bigoplus_{\alpha\in\hh^*}\gg^\alpha,$$
where $$\gg^\alpha=\{g\in \gg | [h,g]=\alpha(h)g\,\text{for
any}\,h\in\hh\}.$$ It is well-known that in this case $\gg$ is
isomorphic to a direct sum of copies of
$sl(\infty),o(\infty),sp(\infty)$ and finite-dimensional simple
Lie algebras, see [PStr].

We define the category $\Intwt$ as the full subcategory of
$\Int_\gg$ which consists of {\it weight modules} $M$, i.e.
objects $M\in\Int_\gg$ which admit a decomposition
\begin{equation}\label{eq3}M=\bigoplus_{\alpha\in\hh^*}M^\alpha,\end{equation} where
$$M^\alpha=\{m\in M | h\cdot m=\alpha(h)m\,\text{for any}\,h\in\hh\}.$$
Note that \refeq{eq3} is automatically a decomposition of
$\hh$-modules. It is also clear that there is a left exact functor
$$\Gamma^{\mathrm{wt}}_\hh:\Int_\gg\rightsquigarrow \Intwt,\,\, M\mapsto \bigoplus_{\alpha\in\hh^*}M^\alpha.$$
By $\Gamma^{\mathrm{wt}}_{\gg,\hh}$ we denote the composition
$$\Gamma^{\mathrm{wt}}_\hh\circ \Gamma_\gg:\gg\text{-mod} \rightsquigarrow \Intwt.$$

\begin{lemma}\label{wtinj} If $X$ is an injective object of
  $\Int_\gg$, then $\Gamma^{\mathrm{wt}}_\hh(X)$ is an injective object of $\Intwt$.
\end{lemma}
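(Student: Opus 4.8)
The plan is to establish injectivity of $\Gamma^{\mathrm{wt}}_\hh(X)$ in $\Intwt$ by showing $\ext^1_{\Intwt}(Y,\Gamma^{\mathrm{wt}}_\hh(X))=0$ for every weight module $Y\in\Intwt$, reducing this to the injectivity of $X$ in $\Int_\gg$. The first thing to observe is that $\Gamma^{\mathrm{wt}}_\hh$ is right adjoint to the inclusion functor $\Intwt\hookrightarrow\Int_\gg$. Indeed, a $\gg$-module homomorphism from a weight module $Y$ into an arbitrary integrable module $X$ automatically has image in the sum of the weight spaces of $X$, because each weight vector of $Y$ must map to a vector on which $\hh$ acts by the same weight; hence $\hom_\gg(Y,X)=\hom_\gg(Y,\Gamma^{\mathrm{wt}}_\hh(X))$ naturally in $Y$. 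This is the exact analogue of the adjunction $\Gamma_\gg\dashv(\text{inclusion }\Int_\gg\subset\gg\text{-mod})$ used in the remark after Corollary 3.3.

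Next I would invoke the standard categorical fact that a right adjoint to an exact functor preserves injectives. Here the relevant left adjoint is the inclusion $\Intwt\hookrightarrow\Int_\gg$, which is exact (a submodule, quotient, or extension of weight modules, computed inside $\Int_\gg$, is again a weight module — the point being that $\Int_\gg$ is closed under extensions, already used in the proof of Proposition 3.2, and a weight-module structure passes to submodules and quotients trivially, while for an extension one checks that each weight space is still finite-codimensional... more precisely, that the module decomposes over $\hh$, which follows since $\hh$ acts semisimply on both sub and quotient). Granting exactness of the inclusion, for any short exact sequence $0\to Y'\to Y\to Y''\to 0$ in $\Intwt$ and the injective object $X\in\Int_\gg$, applying $\hom_\gg(-,X)$ gives exactness; but by the adjunction this is the same as $\hom_{\Intwt}(-,\Gamma^{\mathrm{wt}}_\hh(X))$ applied to the sequence, so $\hom_{\Intwt}(-,\Gamma^{\mathrm{wt}}_\hh(X))$ is exact, i.e. $\Gamma^{\mathrm{wt}}_\hh(X)$ is injective in $\Intwt$.

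Alternatively, and perhaps more transparently in this concrete setting, one can argue directly: given an injection $Y'\hookrightarrow Y$ in $\Intwt$ and a map $f:Y'\to\Gamma^{\mathrm{wt}}_\hh(X)$, compose with the inclusion $\Gamma^{\mathrm{wt}}_\hh(X)\hookrightarrow X$ to get $Y'\to X$; since $X$ is injective in $\Int_\gg$ and $Y'\hookrightarrow Y$ is an injection of integrable modules, extend to $g:Y\to X$; finally, since $Y$ is a weight module, $g(Y)\subset\Gamma^{\mathrm{wt}}_\hh(X)$, so $g$ is the desired extension landing in $\Gamma^{\mathrm{wt}}_\hh(X)$. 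The main (and really only) obstacle is verifying cleanly that the inclusion $\Intwt\subset\Int_\gg$ is exact — equivalently that $\Intwt$ is closed under extensions inside $\Int_\gg$ — since everything else is formal once the adjunction is in place; but this closure follows because $\hh$ is a splitting Cartan subalgebra, so an integrable $\gg$-module that is an extension of two $\hh$-semisimple modules is itself $\hh$-semisimple.
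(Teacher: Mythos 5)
Your argument is the same as the paper's: the paper's entire proof is the one-line observation that $\Gamma^{\mathrm{wt}}_\hh$ is right adjoint to the inclusion functor $\Intwt\subset\Int_\gg$, and both your adjunction argument and the direct extension-of-maps argument in your last paragraph are exactly this. One correction is needed, however: the claim you single out as ``the main (and really only) obstacle'' --- that $\Intwt$ is closed under extensions inside $\Int_\gg$, i.e.\ that an integrable extension of two $\hh$-semisimple modules is again $\hh$-semisimple --- is false. For $\gg=sl(\infty)$ the module $V^*$ is integrable and sits in the exact sequence $0\to V_*\to V^*\to V^*/V_*\to 0$ whose two ends are weight modules (the quotient is a trivial module), yet $\Gamma^{\mathrm{wt}}_\hh(V^*)=V_*\neq V^*$, so $V^*$ itself is not a weight module (cf.\ Example 6.2 a)). Fortunately this closure is neither what exactness of the inclusion means nor needed anywhere: exactness only requires that kernels and cokernels in $\Intwt$ agree with those computed in $\Int_\gg$, which holds because submodules and quotients of weight modules are weight modules (decompose a vector into its finitely many weight components and separate them by a suitable $h\in\hh$). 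And your direct argument --- extend $Y'\to\Gamma^{\mathrm{wt}}_\hh(X)\hookrightarrow X$ to $g\colon Y\to X$ by injectivity of $X$ in $\Int_\gg$, then note $g(Y)\subset\Gamma^{\mathrm{wt}}_\hh(X)$ because $Y$ is a weight module --- uses only that $Y'\hookrightarrow Y$ is an injection of integrable modules, so the proof stands once the false extension-closure claim is simply deleted.
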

\begin{proof} 
It suffices to note that $\Gamma_\gg^{\mathrm{wt}}$ is a right adjoint
to the inclusion functor $\Int_{\gg,\hh}^{\mathrm{wt}}\subset\Int_\gg$.
\end{proof}
\begin{example}\label{ex36}{\rm  Let $\gg=sl(\infty)$
and $M=V\otimes V_*$ . Consider the $\gg$-module
$M^*$. Let's think of $M^*=(V\otimes V_*)^*$ as the space of all
infinite matrices $B=(b_{ij}), i,j\in \mathbb Z_{>0}$, and of $M$
as the space of finitary infinite matrices $A=(a_{ij}), i,j\in
\mathbb Z_{>0}$, where $B(A)=\sum_{i,j} b_{ij}a_{ji}$. Then $\gg$
is identified with the subspace $F\subset (V\otimes V_*)^*$ of
finitary matrices with trace zero, and the $\gg$-module structure
on $M^*$ is given by $A\cdot B=[A,B]$. We fix the Cartan
subalgebra $\hh$ to be the algebra of finitary diagonal matrices, and
we claim that $\Gamma^{\mathrm{wt}}_\hh(M^*)=F+D$ where $D$ is
the subspace of diagonal matrices. Indeed, clearly $D$ equals the
$\hh$-weight space $(M^*)^0$ of weight 0. Furthermore, any
non-zero eigenspace of $\hh$ is the span of an elementary
non-diagonal matrix, hence $\Gamma^{\mathrm{wt}}_\hh(M^*)=F+D$.
Note also that we have a non-splitting exact sequence of
$\gg$-modules
$$0\to\gg\to\Gamma^{\mathrm{wt}}_\hh(M^*)\to T\to 0,$$ where $T=D/D\cap
F$ is a trivial $\gg$-module of dimension $\beth_1$. }
\end{example}

\begin{corollary}\label{wtinj1} For any $M\in\Int_\gg$, $\Gamma^{\mathrm{wt}}_{\gg,\hh}(M^*)$
is an injective object of $\Intwt$.
\end{corollary}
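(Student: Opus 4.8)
The plan is to obtain this as a direct composition of the two immediately preceding results, \refprop{inj} and \refle{wtinj}. Recall that by definition $\Gamma^{\mathrm{wt}}_{\gg,\hh}=\Gamma^{\mathrm{wt}}_\hh\circ\Gamma_\gg$, so that for any $M\in\Int_\gg$ we have $\Gamma^{\mathrm{wt}}_{\gg,\hh}(M^*)=\Gamma^{\mathrm{wt}}_\hh\bigl(\Gamma_\gg(M^*)\bigr)$. First I would invoke \refprop{inj} to see that $\Gamma_\gg(M^*)$ is an injective object of $\Int_\gg$. Then, since $\Gamma^{\mathrm{wt}}_\hh$ takes $\Int_\gg$ into $\Intwt$, I would set $X:=\Gamma_\gg(M^*)\in\Int_\gg$ and apply \refle{wtinj} to this injective object $X$, concluding that $\Gamma^{\mathrm{wt}}_\hh(X)=\Gamma^{\mathrm{wt}}_{\gg,\hh}(M^*)$ is an injective object of $\Intwt$.

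There is essentially no obstacle to overcome: the only point to unwind is the definitional identity $\Gamma^{\mathrm{wt}}_{\gg,\hh}=\Gamma^{\mathrm{wt}}_\hh\circ\Gamma_\gg$, after which the assertion is purely formal. (Alternatively, one could bypass \refle{wtinj} and argue directly that $\Gamma^{\mathrm{wt}}_{\gg,\hh}$ is right adjoint to the inclusion $\Intwt\subset\gg\text{-mod}$, hence preserves injectivity of the $\gg$-injective module into which $M^*$ embeds; but the composition route above is shorter and self-contained given what has already been proved.) This also parallels \refcor{hull}: just as $\Gamma_\gg(M^{**})$ provides an injective object of $\Int_\gg$ containing $M$, the module $\Gamma^{\mathrm{wt}}_{\gg,\hh}(M^*)$ produced here is the natural explicit injective object of $\Intwt$ attached to $M$.
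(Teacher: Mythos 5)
Your proof is correct and is exactly the argument the paper intends (the corollary is stated without proof precisely because it follows by composing Proposition \ref{inj} with Lemma \ref{wtinj} via $\Gamma^{\mathrm{wt}}_{\gg,\hh}=\Gamma^{\mathrm{wt}}_\hh\circ\Gamma_\gg$). Nothing is missing.
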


Define now $\Intfin$ as the full subcategory of $\Intwt$ consisting of
$\hh$-weight modules $M=\bigoplus_{\alpha\in\hh^*}M^\alpha$ such that
$\dim M^\alpha<\infty$ for any $\alpha\in\hh^*$.

\begin{theo}\label{simpfin} The category $\Intfin$ is semisimple.
\end{theo}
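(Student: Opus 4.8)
The plan is to reduce semisimplicity of $\Intfin$ to the vanishing of $\ext^1$ between simple objects, and then to establish that vanishing using the injectivity results already available together with a weight-space argument. First I would recall that a (Grothendieck) category in which every object has an exhaustive filtration with simple subquotients is semisimple as soon as $\ext^1(S,S')=0$ for all pairs of simple objects $S,S'$; so it suffices to prove the latter. Given $M\in\Intfin$, every finite-dimensional weight space forces $M$ to be a direct limit of its finite-dimensional $\gg_i$-isotypic pieces, and in particular $M$ is a (possibly infinite) direct sum of simple weight modules as an $\hh$-module; the key reduction is that simple objects of $\Intfin$ are countable-dimensional, so one may work with a countable exhaustion throughout.

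Next I would set up the extension problem: suppose $0\to S'\to E\to S\to 0$ is a non-split extension in $\Intfin$. Using \refprop{inj} and \refcor{wtinj1}, the module $\Gamma^{\mathrm{wt}}_{\gg,\hh}((S')^*)$ is an injective object of $\Intwt$ into which $S'$ embeds (via $S'\subset (S')^{**}$, landing in the $\gg$-integrable weight vectors). Since $S'$ has finite-dimensional weight spaces, I want to argue that the injective hull of $S'$ inside $\Intwt$ computed this way still has finite-dimensional weight spaces — this is where one must look closely at the structure of $(S')^*$ and cut down to $\Gamma^{\mathrm{wt}}_\hh$, showing the weight spaces of $\Gamma^{\mathrm{wt}}_{\gg,\hh}((S')^*)$ are finite-dimensional. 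If that holds, then $S'$ has an injective hull inside $\Intfin$ itself, and any extension $E$ of $S$ by $S'$ in $\Intfin$ splits because the inclusion $S'\hookrightarrow E$ extends to a map $E\to \Gamma^{\mathrm{wt}}_{\gg,\hh}((S')^*)$ whose restriction to $S'$ is the identity, giving a retraction.

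The main obstacle is exactly the finite-dimensionality of weight spaces of $\Gamma^{\mathrm{wt}}_{\gg,\hh}((S')^*)$: as \refex{ex36} shows, even for $S'=\gg$ (an object not in $\Intfin$, with an infinite weight space) the module $\Gamma^{\mathrm{wt}}_\hh(M^*)$ can acquire a huge trivial quotient, so one genuinely needs the hypothesis $\dim(S')^\alpha<\infty$ and must understand which weights $\alpha$ can occur in $(S')^*$ and with what multiplicity. I expect that a weight $\alpha$ occurs in $\Gamma^{\mathrm{wt}}_{\gg,\hh}((S')^*)$ with multiplicity equal to $\dim(S')^{-\alpha}$, hence finite, because a $\gg$-integrable vector in $(S')^*$ of weight $\alpha$, when paired against the finitely many weight vectors of $S'$ of weight $-\alpha$, is determined by finitely many coordinates — the integrability cuts off the ``tail'' that would otherwise make the dual infinite-dimensional in each weight. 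Once this multiplicity bound is in hand, the injective object $\Gamma^{\mathrm{wt}}_{\gg,\hh}((S')^*)$ lies in $\Intfin$, $\Intfin$ has enough injectives, every object embeds in a semisimple-looking injective hull, and the $\ext^1$-vanishing — hence semisimplicity — follows. As a sanity check one should verify the claim directly on the natural module: $V\in\Intfin$, $V_*\subset\Gamma_\gg(V^*)$, and $V_*$ has one-dimensional weight spaces, consistent with the multiplicity formula above.
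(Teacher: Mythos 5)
Your central mechanism is the right one and your multiplicity expectation is in fact correct: a weight functional on $S'$ of weight $\alpha$ is automatically supported on the single weight space $(S')^{-\alpha}$, so the weight-$\alpha$ space of $\Gamma^{\mathrm{wt}}_{\hh}((S')^*)$ is $((S')^{-\alpha})^*$ and is finite-dimensional (it is the weight condition, not integrability, that cuts off the ``tail''; integrability then comes for free). The first genuine gap is that you embed $S'$ into the wrong module. The inclusion $S'\subset (S')^{**}$ gives an embedding into $\Gamma^{\mathrm{wt}}_{\gg,\hh}((S')^{**})$, whereas the module whose weight spaces you analyze is $\Gamma^{\mathrm{wt}}_{\gg,\hh}((S')^{*})$, which is the restricted dual $S'_*=\bigoplus_\alpha ((S')^{\alpha})^*$; for $\gg=sl(\infty)$ this is in general not isomorphic to $S'$ (compare $V$ and $V_*$), so $S'$ need not embed into it at all. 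The paper turns exactly this point into the proof: it first checks that $\Gamma^{\mathrm{wt}}_{\gg,\hh}(M^*)=M_*$ is again a \emph{simple} object of $\Intfin$, and then applies the same computation to $M_*$ to get $\Gamma^{\mathrm{wt}}_{\gg,\hh}((M_*)^*)=M$; by Corollary \ref{wtinj1} this exhibits the simple module $M$ itself as injective in $\Intwt$, hence in $\Intfin$. Passing through $M_*$ (rather than through the unwieldy full double dual $M^{**}$) is the manoeuvre your proposal is missing.

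The second gap is the splitting step. Even if you knew that $S'$ embeds into an injective object $I$ of $\Intwt$ lying in $\Intfin$, extending $S'\hookrightarrow I$ to a map $E\to I$ restricts on $S'$ to the inclusion into $I$, not to a retraction onto $S'$; an injective hull with socle $S'$ admits no projection onto $S'$ unless it equals $S'$, so the existence of an injective hull inside $\Intfin$ does not by itself give $\ext^1(S,S')=0$ --- one would still have to rule out maps $S\to I/S'$ that do not lift. Your argument becomes correct precisely when $S'$ itself is injective, which is what the paper's double-restricted-dual identity delivers. (A smaller point: your opening reduction assumes every object of $\Intfin$ has an exhaustive filtration with simple subquotients, which is not established and is close to what the theorem asserts; arguing through injectivity of the simple objects, as the paper does, avoids relying on that hypothesis.)
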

\begin{proof}
Let $M\in\Intfin$ be simple. Then there is an $\hh$-module isomorphism
$$\displaystyle M=\oplus_{\alpha\in\hh^*}M^{\alpha}.$$
Therefore $M^*=\prod_{\alpha\in\hh^*} (M^{\alpha})^*$. A
non-difficult computation shows that
$\Gamma^{\mathrm{wt}}_\hh(M^*)$ is isomorphic to
$\oplus_{\alpha\in\hh^*}(M_{\alpha})^*$. Moreover, using the fact that $\dim M^\alpha<\infty$ for all $\alpha$, it is easy to
check that $\displaystyle
M_*:=\oplus_{\alpha\in\hh^*}(M_{\alpha})^*$ is a simple integrable
$\gg$-module. Hence $M_*=\Gamma^{\mathrm{wt}}_{\gg,\hh}(M^*)$.
Applying $\Gamma^{\mathrm{wt}}_{\gg,\hh}$ again, we see that
$$\Gamma^{\mathrm{wt}}_{\gg,\hh}(\Gamma^{\mathrm{wt}}_{\gg,\hh}(M^*)^*)= M.$$
Therefore $M$ is injective in $\Intwt$, and thus also in $\Intfin$, by Corollary \ref{wtinj1}.
\end{proof}

\begin{example}\label{ex38}\end{example}

a) Let $\gg=sl(\infty)$. One checks immediately that all tensor
powers $V^{\otimes k}$, $V$ being the natural module, are objects
of $\Intfin$. The same applies to the tensor powers of the
conatural module $V_*$. However, the category $\Intfin$ contains
also more interesting modules as the following one:
$M=\underrightarrow{\lim} S^i(V_i)$, $V_i$ being the natural
representation of $sl(i)$ . The module $M$ has 1-dimensional
weight spaces, but is not a highest weight module, see [DP1,
Example 3]. Note also that the adjoint representation is not an
object of $\Intfin$.

b) Let $\gg=o(\infty)$ and let $\gg$ be exhausted by
$\gg_i=o(2i),i\geq 3$. Denote by $S_i^1$ and $S_i^2$ the two
non-isomorphic spinor $\gg_i$-modules. Then $S_i^1$ and $S_i^2$
are both isomorphic to $S_{i-1}^1\oplus S_{i-1}^2$ as
$\gg_{i-1}$-modules. Therefore there is an injective homomorphism
of $\gg_{i-1}$-modules $\phi^{ks}_{i-1}: S^k_{i-1}\to S^s_i$ for
$k,s\in\{1,2\}$, and moreover $\phi^{ks}_{i-1}$ is unique up to
proportionality. Any sequence $\{t_i\}_{i\geq 3}$ of elements in
$\{1,2\}$ defines a direct system
$$S_3^{t_3}\stackrel{\phi_3^{t_3,t_4}}{\longrightarrow}S_4^{t_4}\stackrel{\phi_4^{t_4,t_5}}{\longrightarrow} S_5^{t_5}\stackrel{\phi_5^{t_5,t_6}}{\longrightarrow}\dots$$
and hence a simple $\gg$-module $S(\{t_i\})$. Using the fact that
$S(\{t_i\})$ is locally simple, it is easy to see that
$S(\{t_i\})=S(\{t'_i\})$ if and only if the  ``tails'' of the
sequence $\{t_i\}$ and $\{t'_i\}$ coincide, i.e. $t_i=t'_i$ for
large enough $i$.

The modules $S(\{t_i\})$ are weight modules with 1-dimensional
spaces for any Cartan subalgebra $\hh$ of the form
$\ds\hh=\cup_i\hh_i$ where $\hh_3\subset\hh_4\subset\dots$ are
nested Cartan subalgebras of
$\gg_3=o(6)\subset\gg_4=o(8)\subset\dots$. In particular, $S(\{t_i\})\in\Intfin$.

\section{On the integrability of $M^*$ for $M\in\Int_{\gg}$}

\begin{lemma}\label{crit} Let $M\in\Int_\gg$. Then $M^*\in\Int_\gg$ if and
  only if for any $i>0$ $\hom_{\gg_i}(N,M)\neq 0$ only for finitely
  many non-isomorphic simple $\gg_i$-modules $N$.
\end{lemma}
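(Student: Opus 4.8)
The plan is to translate the integrability of $M^*$ into a statement about the restriction of $M$ to each $\gg_i$, and then to reduce the integrability of $M^*$ as a $\gg$-module to its integrability as a $\gg_i$-module for every $i$. First I would recall that $M \in \Int_\gg$ means that, restricted to any $\gg_i$, the module $M$ decomposes as a direct sum $M|_{\gg_i} = \bigoplus_{N} N^{\oplus \mu_N}$ over the isomorphism classes of simple finite-dimensional $\gg_i$-modules $N$, where $\mu_N$ is the (possibly infinite) multiplicity and $\mu_N = \dim \hom_{\gg_i}(N,M)$. Dualizing, $M^*|_{\gg_i} \simeq \prod_{N}(N^*)^{\oplus \mu_N}$ as $\gg_i$-modules. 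Since each $N^*$ is again a finite-dimensional simple $\gg_i$-module, the product $\prod_N (N^*)^{\oplus \mu_N}$ is a direct sum of finite-dimensional $\gg_i$-modules — hence an integrable $\gg_i$-module — precisely when only finitely many $\mu_N$ are nonzero; if infinitely many isotypic components occur, the product contains an element with components in infinitely many distinct isotypic pieces, which generates an infinite-dimensional $\gg_i$-submodule, so $M^*|_{\gg_i}$ fails to be integrable over $\gg_i$.

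Next I would assemble the local statements into the global one. For the ``only if'' direction: if $M^* \in \Int_\gg$, then in particular $M^*$ is integrable over each $\gg_i$, so by the previous paragraph only finitely many non-isomorphic simple $\gg_i$-modules $N$ have $\hom_{\gg_i}(N,M) \neq 0$, since $\hom_{\gg_i}(N,M) \neq 0$ iff $\hom_{\gg_i}(N^*, M^*) \neq 0$ iff the $N^*$-isotypic component of $M^*|_{\gg_i}$ is nonzero. For the ``if'' direction: suppose for every $i$ only finitely many simple $\gg_i$-modules occur in $M$. I must show $M^* \in \Int_\gg$, i.e. that every $B \in M^*$ and every $g \in \gg$ span a finite-dimensional space under iterated action of $g$. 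Given $g \in \gg$, choose $i$ with $g \in \gg_i$. By hypothesis $M|_{\gg_i}$ is a \emph{finite} direct sum of finitely many isotypic components each of finite length (as $\gg_i$-modules, infinite multiplicity but each simple constituent finite-dimensional); more to the point, $M^*|_{\gg_i} \simeq \prod_N (N^*)^{\mu_N}$ with only finitely many $N$, and on each such factor $g$ acts by a matrix built from finitely many finite blocks, so the whole factor is integrable as a $\gg_i$-module. Thus $g$ acts locally finitely on $M^*$, which is exactly integrability of $M^*$ as a $\gg$-module.

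The main obstacle I anticipate is the dualization step with infinite multiplicities: one must be careful that $(\bigoplus_N N^{\oplus\mu_N})^*$ is $\prod_N (N^*)^{\oplus\mu_N}$ and argue correctly why an infinite product of nonzero finite-dimensional $\gg_i$-modules fails to be integrable over $\gg_i$ — the key point being that an infinite product is \emph{not} a direct sum of finite-dimensional submodules, since a vector supported in infinitely many factors generates an infinite-dimensional cyclic $\gg_i$-module unless $g$ acts nilpotently with bounded index, which it does not for a generic $g$ once the set of occurring weights is infinite. One should phrase this via a single well-chosen semisimple element $h \in \gg_i$ (or an $\mathfrak{sl}(2)$-triple) whose eigenvalues on $M$ form an infinite set exactly when infinitely many $N$ occur, so that $h$ cannot act locally finitely on $M^*$. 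The remaining verifications — that $\hom_{\gg_i}(N,M)\neq 0 \Leftrightarrow \hom_{\gg_i}(N^*,M^*)\neq 0$, and that integrability over every $\gg_i$ is equivalent to integrability over $\gg$ — are routine given the definitions in Section 2.
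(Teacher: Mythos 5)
Your proposal is correct and follows essentially the same route as the paper's proof: decompose $M|_{\gg_i}$ into isotypic components, identify $M^*|_{\gg_i}$ with the corresponding direct product, and, when infinitely many simple $\gg_i$-modules occur, pick $h$ in a Cartan subalgebra of $\gg_i$ separating the occurring highest weights so that a vector of $M^*$ with components in infinitely many factors is not locally finite under $\CC[h]$. The only immaterial difference is in the finite case, where the paper kills $M^*$ by a polynomial in the given $g\in\gg_i$ instead of observing that a finite product of duals of isotypic components is again a direct sum of finite-dimensional $\gg_i$-modules.
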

\begin{proof} Fix $i$.
Let $\Lambda_i$ be the set of integral dominant weights of $\gg_i$
(for some fixed Borel subalgebra $\bb_i$ of $\gg_i$ with fixed
Cartan subalgebra $\hh_i\subset\bb_i$)
  and $V_{\lambda}^i$ be the simple $\gg_i$-module with highest weight
  $\lambda$.
Denote by $\Lambda_i(M)$ the set of all $\lambda\in\Lambda_i$
  such that $\hom_{\gg_i}(V_{\lambda}^i,M)\neq 0$. Since
  $M$ is a semisimple $\gg_i$-module, we can write $M$ as
\[
M=\oplus_{\lambda\in\Lambda_i(M)} M^\lambda\otimes V_{\lambda}^i,
\]
where $M^\lambda:=\hom_{\gg_i}(V_{\lambda}^i,M)$ is a trivial
$\gg_i$-module. We have
\[
M^*=\prod_{\lambda\in\Lambda_i(M)} (V_{\lambda}^i)^*\otimes
(M^\lambda)^*.
\]

Suppose that $\Lambda_i(M)$ is finite. Then for any fixed
$g\in\gg_i$ there is a polynomial $p_\lambda(z)$ such that
$p_\lambda(g)\cdot (V_{\lambda}^i)^*=0$. Set
$p(z):=\prod_{\lambda\in\Lambda_i(M)}p_\lambda(z)$. Then
$p(g)\cdot M^*=0$. Hence $g$ acts integrably on $M^{*}$, i.e.
$M^*$ is integrable over $\gg_i$.

Now let $\Lambda_i(M)$ be infinite. Let $v_\lambda$ be a non-zero
vector of weight $-\lambda$ in $(V_{\lambda}^i)^*\otimes
(M^\lambda)^*$. One can choose $h$ in the Cartan subalgebra of
$\gg_i$ such that $\lambda(h)\neq\mu(h)$ for any
$\mu\neq\lambda\in\Lambda_i(M)$. Let
$v:=\prod_{\lambda\in\Lambda_i(M)}(v_\lambda)\in
\prod_{\lambda\in\Lambda_i(M)}
(V_{\lambda}^i)^*\otimes(M^\lambda)^*$. Then $\dim(\CC[h]\cdot
v)=\infty$, and $M^*$ is not $\gg_i$-integrable. \end{proof}

\begin{corollary}\label{tendual} Let $M,M'\in\Int_\gg$. If
  $M^*,(M')^*\in\Int_\gg$, then $(M\otimes M')^*\in \Int_\gg$ and $M^{**}\in\Int_\gg$.
\end{corollary}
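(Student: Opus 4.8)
The plan is to reduce everything to the criterion of Lemma~\ref{crit}. That lemma says that for $N\in\Int_\gg$, the dual $N^*$ is integrable if and only if, for every $i$, only finitely many non-isomorphic simple $\gg_i$-modules occur in the restriction $N|_{\gg_i}$; equivalently, the set $\Lambda_i(N)$ of dominant weights of $\gg_i$ appearing in $N$ is finite. So the whole corollary becomes a statement about the finiteness of certain weight sets, and the hypotheses $M^*,(M')^*\in\Int_\gg$ give us that $\Lambda_i(M)$ and $\Lambda_i(M')$ are finite for all $i$.

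For the tensor product: fix $i$, and decompose $M|_{\gg_i}=\bigoplus_{\lambda\in\Lambda_i(M)}M^\lambda\otimes V_\lambda^i$ and similarly for $M'$, as in the proof of Lemma~\ref{crit}. Then $(M\otimes M')|_{\gg_i}$ is a direct sum, over pairs $(\lambda,\mu)\in\Lambda_i(M)\times\Lambda_i(M')$, of copies of $V_\lambda^i\otimes V_\mu^i$. Each such $V_\lambda^i\otimes V_\mu^i$ is a finite-dimensional $\gg_i$-module, hence decomposes into finitely many simple $\gg_i$-modules (here we use that $\gg_i$ is finite-dimensional semisimple). Since $\Lambda_i(M)$ and $\Lambda_i(M')$ are both finite, the product set is finite, so $\Lambda_i(M\otimes M')\subseteq\bigcup_{(\lambda,\mu)}\weights$ of $V_\lambda^i\otimes V_\mu^i$ is a finite union of finite sets, hence finite. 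By Lemma~\ref{crit}, $(M\otimes M')^*\in\Int_\gg$.

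For $M^{**}$: the cleanest route is to observe that $M^{**}=\Gamma_\gg(M^{**})\oplus(\text{nothing})$ — more carefully, since $M^*\in\Int_\gg$ by hypothesis, we have $\Lambda_i(M^*)$ finite for all $i$, and we want $\Lambda_i(M^{**})=\Lambda_i((M^*)^*)$ finite. But $M^*$ need not be countable-dimensional, so a little care is needed; still, the decomposition $M^*|_{\gg_i}=\bigoplus_{\lambda\in\Lambda_i(M^*)}(M^*)^\lambda\otimes V_\lambda^i$ as $\gg_i$-modules holds (integrable modules over a finite-dimensional semisimple algebra are semisimple), and then $M^{**}|_{\gg_i}=\prod_{\lambda}(V_\lambda^i)^*\otimes((M^*)^\lambda)^*$, so the simple $\gg_i$-constituents of $M^{**}$ are exactly those of the $(V_\lambda^i)^*$ for $\lambda\in\Lambda_i(M^*)$ — a finite set since $\Lambda_i(M^*)$ is finite and each $(V_\lambda^i)^*$ is simple. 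Alternatively, and perhaps more slickly: apply the tensor-product part with $M'$ the trivial one-dimensional module to get nothing, so instead just note that $M^{**}$ is a submodule of $(M^*)^*$ where $M^*\in\Int_\gg$, and run Lemma~\ref{crit} directly on $M^*$ in place of $M$. Either way the statement is immediate once the bookkeeping with $\Lambda_i$ is set up.

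The main obstacle, such as it is, is purely one of care with infinite direct products and uncountable-dimensional modules: Lemma~\ref{crit} is stated for $M\in\Int_\gg$ without countability assumptions and its proof already handles the product decomposition $M^*=\prod_\lambda(V_\lambda^i)^*\otimes(M^\lambda)^*$, so in fact no new difficulty arises — one simply has to make sure that when restricting a product like $\prod_\lambda(V_\lambda^i)^*\otimes(M^\lambda)^*$ to $\gg_i$ it is still integrable (which is exactly the content of the finite-$\Lambda_i$ case of Lemma~\ref{crit}) and that tensoring two such modules does not create infinitely many new $\gg_i$-types. I expect the proof to be only a few lines: cite Lemma~\ref{crit} in both directions, decompose over $\gg_i$, and invoke finiteness of $\Lambda_i(M)\times\Lambda_i(M')$ together with finite decomposability of tensor products of finite-dimensional $\gg_i$-modules.
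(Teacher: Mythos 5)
Your reduction of everything to Lemma \ref{crit} is exactly the intended route (the paper states this corollary without proof, as an immediate consequence of that lemma), and your argument for $(M\otimes M')^*$ is complete: $\Lambda_i(M)$ and $\Lambda_i(M')$ are finite by Lemma \ref{crit}, the $\gg_i$-types of $M\otimes M'$ lie among the finitely many simple constituents of the modules $V^i_\lambda\otimes V^i_\mu$ with $(\lambda,\mu)\in\Lambda_i(M)\times\Lambda_i(M')$, and Lemma \ref{crit} applies again.

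There is, however, one non sequitur in your treatment of $M^{**}$: you assert ``since $M^*\in\Int_\gg$ by hypothesis, we have $\Lambda_i(M^*)$ finite for all $i$.'' Integrability of a module never by itself bounds the number of its $\gg_i$-types --- for instance $\underrightarrow{\lim}\,S^i(V_i)$ in Example 4.4\,a) is integrable with infinitely many $\gg_i$-types --- and Lemma \ref{crit} applied to $M$ gives finiteness of $\Lambda_i(M)$, not of $\Lambda_i(M^*)$. Both of your proposed routes (direct computation of $M^{**}|_{\gg_i}$, or ``run Lemma \ref{crit} on $M^*$'') need this finiteness as input, so as written the second half has a gap. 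The fix is the one-line dualization step you in fact use one level higher: since $\Lambda_i(M)$ is finite, the decomposition from the proof of Lemma \ref{crit} gives $M^*=\prod_{\lambda\in\Lambda_i(M)}(V^i_\lambda)^*\otimes(M^\lambda)^*$ as a $\gg_i$-module, and any simple $\gg_i$-submodule of this product maps nontrivially to some factor, hence is isomorphic to some $(V^i_\lambda)^*$ with $\lambda\in\Lambda_i(M)$. Therefore $\Lambda_i(M^*)$ is finite for every $i$, and Lemma \ref{crit} applied to $M^*$ yields $M^{**}=(M^*)^*\in\Int_\gg$. With that sentence inserted, your proof is correct and coincides with the intended argument.
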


\begin{prop}\label{diag} Let $\gg$ be a locally simple Lie algebra.
There exists a non-trivial module $M\in\Int_{\gg}$ such that $M^*$
  is integrable if and only if $\gg$ is diagonal.
\end{prop}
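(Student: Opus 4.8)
The plan is to prove the two implications separately; the second is the substantial one.

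\emph{If $\gg$ is diagonal, a suitable $M$ exists.} I would fix an exhaustion $\gg=\bigcup_i\gg_i$ with all $\gg_i$ classical simple and $V_i|_{\gg_{i-1}}\cong k_iV_{i-1}\oplus l_iV_{i-1}^*\oplus\CC^{s_i}$, and take $M=V$, where $V=\underrightarrow{\lim}V_i$ is a natural module of $\gg$; it is non-trivial and integrable. An induction on $j-i\geq 0$ shows that $V_j|_{\gg_i}$ is a direct sum of copies of $V_i$, $V_i^*$ and $\CC$: the case $j=i$ is clear, and if this holds for $j$ then $V_j^*|_{\gg_i}=(V_j|_{\gg_i})^*$ is again a sum of copies of $V_i^*$, $V_i$, $\CC$, so $V_{j+1}|_{\gg_i}\cong(k_{j+1}V_j\oplus l_{j+1}V_j^*\oplus\CC^{s_{j+1}})|_{\gg_i}$ has the same shape. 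Hence only the isomorphism classes $V_i$, $V_i^*$, $\CC$ occur in $V|_{\gg_i}$, i.e. $\hom_{\gg_i}(N,V)\neq 0$ for at most three non-isomorphic simple $\gg_i$-modules $N$, for every $i$; by \refle{crit} this gives $V^*\in\Int_\gg$. (I use here that the criterion of \refle{crit}, being equivalent to $M^*\in\Int_\gg$, is independent of the chosen exhaustion, so it may be verified for the diagonal exhaustion above.)

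\emph{If a suitable $M$ exists, then $\gg$ is diagonal.} Let $M\in\Int_\gg$ be non-trivial with $M^*\in\Int_\gg$, and fix an exhaustion $\gg=\bigcup_i\gg_i$ with all $\gg_i$ \emph{simple} (possible since $\gg$ is locally simple). By \refle{crit}, for each $i$ the set $\Lambda_i(M)$ of highest weights $\lambda$ with $\hom_{\gg_i}(V_\lambda^i,M)\neq 0$ is finite, where $V_\lambda^i$ denotes the simple $\gg_i$-module of highest weight $\lambda$. As $M$ is non-trivial, some $\gg_{i_0}$ acts non-trivially on $M$, so for every $j\geq i_0$ the semisimple $\gg_j$-module $M$ contains a non-trivial simple finite-dimensional submodule $N_j$; since $\gg_j$ is simple and $N_j$ non-trivial, $N_j$ is a faithful $\gg_j$-module, so the representation homomorphism $\gg_j\to\gl(N_j)$ is an injective morphism of $\gg_j$-modules and $\gg_j$, with its adjoint action, is isomorphic to a submodule of $\gl(N_j)=N_j\otimes N_j^*$. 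Restricting to $\gg_i$ for $i_0\leq i\leq j$ and using $N_j|_{\gg_i}\subseteq M|_{\gg_i}$, every composition factor of $N_j|_{\gg_i}$ is a $V_\lambda^i$ with $\lambda\in\Lambda_i(M)$ and every composition factor of $N_j^*|_{\gg_i}=(N_j|_{\gg_i})^*$ is a $(V_\lambda^i)^*$; hence every composition factor of $\gg_j|_{\gg_i}$ lies in the finite set $T_i$ of composition factors of the $\gg_i$-modules $V_\lambda^i\otimes(V_\mu^i)^*$, $\lambda,\mu\in\Lambda_i(M)$. Restricting once more from $\gg_{i_0}$ down to $\gg_i$ for $i<i_0$, I conclude: for every $i$, the composition factors of $\gg_j|_{\gg_i}$, as $j$ ranges over all $j\geq i$, lie in a single finite set depending only on $i$.

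At this point I would invoke the structural fact that a locally simple Lie algebra $\gg=\bigcup_i\gg_i$ with this property --- for each $i$, only finitely many isomorphism classes of simple $\gg_i$-modules occur among the composition factors of $\gg_j|_{\gg_i}$ over all $j\geq i$ --- must be diagonal. This is precisely where the work of Baranov and Baranov--Zhilinskii on diagonal locally simple Lie algebras enters (cf. [Ba1], [BZh]): informally, an inclusion of classical Lie algebras whose natural-module branching is not of the diagonal form $kV\oplus lV^*\oplus\CC^s$, and likewise any exhaustion involving exceptional algebras of unbounded rank, forces composition factors of $\gg_j|_{\gg_i}$ of unbounded highest weight, whereas diagonal branching keeps them within a fixed finite set, consistently with the computation above. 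Granting this, $\gg$ is diagonal.

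The main obstacle is this last step --- deducing from the finiteness of the composition factors of $\gg_j|_{\gg_i}$ the explicit combinatorial structure of a diagonal exhaustion. Everything else is a formal combination of \refle{crit} with the elementary observation that, for any faithful finite-dimensional module $N$ of a simple finite-dimensional Lie algebra $\kk$, the adjoint module of $\kk$ embeds into $N\otimes N^*$.
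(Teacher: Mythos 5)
Your ``if'' direction and the formal first half of the converse are correct and essentially parallel the paper: the paper likewise takes $M=V$ for a diagonal $\gg$ and checks the criterion of \refle{crit}, and for the converse it also embeds the adjoint module into $\End_\CC M$ (your variant with a faithful simple submodule $N_j\subset M$ and $\gg_j\hookrightarrow N_j\otimes N_j^*$ is the same idea) to conclude that, for each $i$, the $\gg_i$-constituents of $\gg_j$ stay in a fixed finite set as $j$ grows.

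The final step, however, is a genuine gap --- and you flag it yourself. You never prove the ``structural fact'' that this finiteness of $\gg_i$-composition factors of $\gg_j|_{\gg_i}$ forces diagonal branching; you only gesture at [Ba1] and [BZh], which concern the classification of finitary simple Lie algebras and of diagonal direct limits, and neither is stated in the form of the criterion you need. Deducing the diagonal shape of $V_{i+1}|_{\gg_i}$ from your finiteness property is exactly the substance of the proposition (the paper does remark that Corollary 3.9 of [Ba2] implies it, but that is a different reference, and the paper still gives its own argument). Concretely, the paper first passes to a subexhaustion by classical simple $\gg_i$ of a single type, converts the bound on $\gg_i$-isotypic components of $\gg_{i+k}$ into a bound for $V_{i+k}$ (using adjoint $\simeq (V_i\otimes V_i^*)/\CC$, $S^2(V_i)$, or $\Lambda^2(V_i)$), and then, in type A, fixes an $sl(2)$-subalgebra: the resulting uniform bound $k$ on $sl(2)$-weights occurring in the natural modules forces, in the decomposition $V_{i+1}|_{\gg_i}=T_{\lambda_1}(V_i)\oplus\dots\oplus T_{\lambda_s}(V_i)$ into images of Young projectors, every $\lambda_j$ to be a single column of length $0$, $1$, $\dim V_i-1$ or $\dim V_i$, i.e. every constituent to be $\CC$, $V_i$ or $V_i^*$; types B, C, D are reduced to this via a maximal root subalgebra of type A. Without an argument of this kind, or at least a precise statement and correct citation of Baranov's criterion, your proof of the ``only if'' direction is incomplete.
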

\begin{proof}
First of all, if $\gg$ is diagonal, then any natural module $\ds
V=\underrightarrow{\lim}V_n$ satisfies the finiteness condition of
Lemma \ref{crit}, hence $V^*$ is integrable.

Before we prove the other direction, note that, by passing to a
subexhaustion, we can always assume that $\gg$ is exhausted by
classical simple Lie algebras $\gg_i$ of the same type (A, B, C or
D). Let now $M\in\Int_\gg$ be a non-trivial and $M^*$ be
integrable. We will show that $\gg$ is diagonal. Since $M$
satisfies the finiteness condition of Lemma \ref{crit},
$\End_\CC\,M$ and its submodules satisfy this condition too. The 
adjoint module $\gg$ is a submodule of $\End_\CC\,M$, hence this
implies that for each $i$ the number of $\gg_i$-isotypic
components in $\gg_{i+k}$ is uniformly bounded for all $k>0$.
Since the adjoint module of $\gg_i$ is isomorphic to $(V_i\otimes
V_i^*)/\mathbb C$ in the type A case, to $S^2(V_i)$ in type C, and to
$\Lambda^2(V_i)$ in types B or D, one can easily check that for
each $i$ the number of $\gg_i$-isotypic components in $V_{i+k}$ is
also uniformly bounded by for all $k>0$. Our goal is to show that
for all sufficiently large $i$, $V_{i+1}$ restricted to $\gg_i$ is
isomorphic to a direct sum of copies of $V_i$, $V_i^*$ and
$\mathbb C$.

Let us start with the type A case. Pick an $sl(2)$-subalgebra in
$\gg_n$ for some $n$. The set of $sl(2)$-weights in $V$ is finite.
Thus we can let $k\in\ZZ_{>0}$ be the maximal weight in this set
and fix $i$ such that $k$ is a weight of $V_i$. Note that
$sl(2)\subset\gg_i$. Then we have an isomorphism of $\gg_i$-modules
$$V_{i+1}=T_{\lambda_1}(V_i)\oplus\dots\oplus T_{\lambda_s}(V_i),$$
where each $\lambda_j$ is a Young diagram and $T_{\lambda_j}(V_i)$ is
the image of the corresponding Young projector in the appropriate
tensor power of $V_i$. Since $V_{i+1}$ does not have any weight
greater than $k$, each diagram $\lambda_j$ has only one column. Indeed,
otherwise we can put a vector of weight $k$ in each box of the first row and
put other weight vectors in all other boxes of $\lambda_j$ so that
the total sum of all weights of vectors is greater than $k$, which
contradicts the fact that $k$ is the maximal weight. Next we claim
that the length of this column equals $0, 1$, $\dim V_i$, or $\dim
V_i-1$. Indeed, if we put in the boxes of $\lambda_i$ linearly
independent vectors of maximal possible sum of weights, the total
sum is not greater than $k$ only in these four cases. Hence each
simple $\gg_i$-constituent of $V_{i+1}$ is isomorphic to $V_i$,
$V_i^*$ or $\mathbb C$ (the numbers $0$ and $\dim V_i$ correspond both 
to the trivial 1-dimensional $\gg_i$-module).

If each $\gg_i$ is of type B or C, D, let $\mathfrak s_i\subset
\gg_i$ be a maximal root subalgebra of type A. Notice that by the
previous argument the restriction of $V_{i+1}$ on $\mathfrak s_i$
is a sum of natural, conatural and trivial modules. That is only
possible if the restriction of $V_{i+1}$ to $\gg_i$ is a sum of
natural and trivial modules.
\end{proof}

Proposition 4.3 follows also from Corollary 3.9 in [Ba2].

\begin{example}\end{example}

a) Let $\gg=sl(\infty)$, and let
$M=\underrightarrow{\lim}S^i(V_i)$ be as in \refex{ex38}, a). Then
$\hom_{\gg_i}(S^k(V_i),S^j(V_j))\neq 0$ for all $i$, $k\leq j$.
Hence $\hom_{\gg_i}(S^k(V_i),M)\neq 0$ for all $k>0$, and by Lemma
\ref{crit} $M^*$ is not an object of $\Int_\gg$.

b) Consider the case $\gg=o(\infty)$ and let $S(\{t_i\})$ be the
$\gg$-module defined in \refex{ex38}, b). Then if $N$ is a simple
$\gg_i$-module, $\hom_{\gg_i}(N,S(\{t_i\}))\neq 0$ iff $N\simeq
S^1_i$ or $N\simeq S^2_i$. Hence $S(\{t_i\})^*\in \Int_\gg$ by
Lemma \ref{crit}. Moreover, $S(\{t_i\})^*$  is injective by
Proposition \ref{inj}.

c) Let $\gg=sl(\infty)$ and let $M$ be as in Example 3.5. Then
$\hom_{\gg_i}(N,M)\neq 0$ if $N$ is isomorphic to one of the
following simple $\gg_i$-modules: trivial, natural, conatural, adjoint.
Therefore $M^*$ is $\gg$-integrable and injective in $\Int_\gg$.
Furthermore, $M^*\cong\CC\oplus \gg^*$.

\section{On the Loewy length of $\Gamma_{\gg}(M^*)$ for $M\in\Int_{\gg}$}

Recall that the  {\it socle}, $\soc (M)$, of a $\gg$-module $M$ is
the largest semisimple submodule of $M$. The {\it socle
filtration} of $M$ is the filtration of $\gg$-modules
$$0\subset \soc  (M)\subset \soc^1 (M)\subset\dots\subset \soc^i (M)\subset\dots,$$
where $\soc^i (M)=p_i^{-1}(\soc (M/\soc^{i-1}(M))$ and $p_i:M\to
M/\soc^{i-1} (M)$ is the natural projection. We say the the socle
filtration of $M$ is {\it exhaustive} if $M=\underrightarrow{\lim}
(\soc^i(M))$. We say that $M$ has {\it finite Loewy length} if the
socle filtration of $M$ is finite and exhaustive. The {\it Loewy
length} of $M$ equals $k+1$ where $k=\min\{r\ |\ \soc^r(M)=M\}$.

\begin{prop}\label{mult} Let $M\in\Int_{\gg}$ be a simple $\gg$-module such that $\Gamma_{\gg}(M^*)$
  has finite Loewy length. Then there exist $n\in\ZZ_{>0}$ and a direct system $M_i$ of
  simple finite-dimensional $\gg_i$-modules such that  $M=\underrightarrow{\lim} M_i$
  and $\dim\hom_{\gg_i} (M_i,M_{j})=1$ for
  all $j>i>n$.
\end{prop}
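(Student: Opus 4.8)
The plan is to analyze the socle filtration of $\Gamma_\gg(M^*)$ through the "layers" detected by restriction to the finite-dimensional subalgebras $\gg_i$. Since $M$ is simple and integrable, it is locally simple: choose a direct system $M = \underrightarrow{\lim} M_i$ with each $M_i$ a simple finite-dimensional $\gg_i$-module (this uses that a simple integrable module can be exhausted by simple $\gg_i$-submodules; if $M$ is not countable-dimensional one reduces to a countable-dimensional simple integrable submodule, but simplicity forces it to be countable-dimensional anyway since it is generated by any nonzero vector together with the countable $\gg$). The quantity I want to control is $d_{ij} := \dim\hom_{\gg_i}(M_i, M_j)$ for $j > i$; note $d_{ij}$ is the multiplicity of the simple $\gg_i$-module $M_i$ inside $M_j$ viewed as a $\gg_i$-module, and since the maps $M_i \hookrightarrow M_j$ are injective and compatible, $d_{ij}$ is nondecreasing in $j$ for fixed $i$. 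The claim is that this sequence stabilizes at $1$ once $i$ is large enough.

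First I would relate the growth of $d_{ij}$ to the Loewy length of $\Gamma_\gg(M^*)$. Write $L := \Gamma_\gg(M^*)$, which has finite Loewy length, say $\ell$. Restricting to $\gg_i$, $M$ is a semisimple $\gg_i$-module, so as in the proof of Lemma~\ref{crit} we have $M = \bigoplus_{\lambda \in \Lambda_i(M)} M^\lambda \otimes V^i_\lambda$ with $M^\lambda$ trivial; in particular the multiplicity of $M_i = V^i_{\lambda_0}$ (where $\lambda_0$ is the relevant highest weight) in $M\!\mid_{\gg_i}$ is $\dim M^{\lambda_0}$, and $\dim M^{\lambda_0} = \lim_j d_{ij}$. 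By the computation underlying Proposition~\ref{inj} and Example~\ref{ex36}, the $\gg_i$-socle structure of $L = \Gamma_\gg(M^*)$ is governed by the dual multiplicity spaces $(M^\lambda)^*$: concretely, $\soc^{k}(L)$ restricted to $\gg_i$ only sees constituents $(V^i_\lambda)^*$ whose multiplicity space, as a module over the "upper-triangular" algebra built from the tail $\gg_{i+1}, \gg_{i+2},\dots$, has a filtration of length at most $k+1$. The key point is that if $d_{ij} \geq 2$ for infinitely many $j$ (equivalently for all large $j$), then inside $M^{\lambda_0}$, viewed as a module over $\gg/\gg_i$ — more precisely as a $\gg_{i'}$-module for $i' > i$ — one produces a chain of proper submodules of unbounded length, and dualizing gives a chain in $L$ of unbounded Loewy length, contradicting finiteness.

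Concretely, here is the mechanism I expect to be the heart of the argument. Fix $i$ and suppose $d_{i,j} \geq 2$ for all $j \gg 0$. For each such $j$ the space $\hom_{\gg_i}(M_i, M_j)$ has dimension $\geq 2$; choose two linearly independent embeddings. As $j$ increases these organize into a tower, and one gets a nested sequence of finite-dimensional subspaces $W_j \subset \hom_{\gg_i}(M_i, M)$ with $\dim W_j \to \infty$, carrying compatible $\gg_j$-actions (for varying $j$) that are strictly nilpotent-filtered because the $M_j$'s are simple. Passing to duals, $\hom_{\gg_i}\bigl(M_i, \Gamma_\gg(M^*)\bigr)$ — which is a $\gg_i$-trivial multiplicity space with an action of the "germ at infinity" of $\gg$ — inherits a filtration whose length is at least the Loewy length of the relevant piece of $\Gamma_\gg(M^*)$; unbounded $d_{ij}$ forces this length to be unbounded too, contradicting that $\Gamma_\gg(M^*)$ has finite Loewy length. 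Hence for each $i$ the sequence $d_{ij}$ is eventually bounded by some constant; a further argument — using that the embeddings $M_i \hookrightarrow M_{i+1} \hookrightarrow \cdots$ are coherent, so that $d_{ij} = 1$ for one large $i$ propagates — shows the bound is exactly $1$ for all sufficiently large $i$. Set $n$ to be such an index. The main obstacle is making precise the duality between "length of the socle filtration of $\Gamma_\gg(M^*)$ as a $\gg$-module" and "length of the natural filtration on the multiplicity spaces $\hom_{\gg_i}(M_i, M^*)$ under the tail algebra": this requires carefully tracking how the socle filtration of a module interacts with restriction to $\gg_i$ and with the $\hom_{\gg_i}(V^i_\lambda, -)$ functors, and checking that no length is lost or gained. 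Once that correspondence is set up cleanly, the rest is bookkeeping.
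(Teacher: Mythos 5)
There are two genuine gaps. First, you start by assuming that $M$ is locally simple, i.e.\ that it admits an exhaustion by \emph{simple} $\gg_i$-submodules. This is not a general property of simple integrable modules (a simple $\gg_i$-submodule of a semisimple $\gg_{i+1}$-module may sit diagonally across non-isomorphic $\gg_{i+1}$-constituents and hence lie in no simple $\gg_{i+1}$-submodule), and nothing in the paper prior to Proposition \ref{mult} provides it: it is part of what the proposition asserts. The paper's proof deliberately avoids this assumption, working only with the non-simple exhaustion $M_i:=U(\gg_i)\cdot m$ and producing the simple, multiplicity-one exhaustion only at the end, by passing to a simple submodule $Q\subset M^*$ (which exists because $\Gamma_\gg(M^*)$ has finite Loewy length), proving the multiplicity-one property for $Q$ (Lemma \ref{locally}), and then identifying $M\simeq Q_*=\underrightarrow{\lim}S_i^*$ via Lemma \ref{unique}. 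So your starting point already presupposes a nontrivial piece of the conclusion.

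Second, the core implication you need --- from $\dim\hom_{\gg_i}(M_i,M_j)\ge 2$ for all large $j$ to infinite Loewy length of $\Gamma_\gg(M^*)$ --- is precisely the step you leave as an acknowledged ``obstacle'', and the mechanism you sketch does not work as stated: exhibiting a chain of proper submodules of unbounded length says nothing about Loewy length (an infinite direct sum of simples has Loewy length $1$ but arbitrarily long chains). What is actually needed is a construction of \emph{non-split iterated extensions with controlled socle}. This is the content of Lemma \ref{extension} (where non-splitness comes from the dimension count $\dim\hom_{\gg_i}(X_i,Q_{i+1})\le n_i+n_{i+1}$ versus $\dim\hom_{\gg_i}(X_i,Q_{i+1})\ge k_in_{i+1}$ with $k_i>2$, forcing the impossible strict decrease $n_{i+1}<n_i$) and of Corollary \ref{ex}, which iterates it to build modules with socle $Q$ and arbitrarily large Loewy length; no analogue of this appears in your proposal. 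Note also that the paper applies this machinery to $Q\subset M^*$, not to $M$ itself, exactly because $Q$ --- unlike $M$ --- is known to map into the injective module $\Gamma_\gg(M^*)$ of finite Loewy length, whereas your plan works directly with $M$ and the multiplicity spaces $\hom_{\gg_i}(M_i,M^*)$, with the asserted ``duality'' between their filtrations and the socle filtration of $\Gamma_\gg(M^*)$ never made precise. Finally, even granted a bound on the multiplicities, getting them down to exactly $1$ requires the isotypic-component graph argument of Lemma \ref{locally} (components can braid as $i$ grows, so one must find a vertex whose forward graph is a string); your remark that coherence of the embeddings makes $d_{ij}=1$ ``propagate'' does not substitute for it.
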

We first prove several lemmas.
\begin{lemma}\label{extension} Let $Q=\underrightarrow{\lim} Q_i\in\Int_\gg$, where $Q_i$ are finite-dimensional, not necessarily simple, $\gg_i$-modules. Assume that for all sufficiently
large $i$ there exists a simple $\gg_i$-submodule $X_i\subset Q_i$ such that
$\dim\hom_{\gg_i}(X_i,X_{i+1})>2$. Then there exists a locally simple
module $X=\underrightarrow{\lim} X_i\in\Int_{\gg}$ and a non-trivial extension of $\gg$-modules
$$0\to Q\to Z\to X\to 0.$$
\end{lemma}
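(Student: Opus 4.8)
The plan is to construct the extension by a dualization-and-truncation argument, mimicking the logic behind Propositions 3.1 and 3.2. First I would observe that, since $\dim\hom_{\gg_i}(X_i,X_{i+1})>2$ for large $i$, the direct system $\{X_i\}$ (with any fixed choice of inclusions $X_i\hookrightarrow X_{i+1}$ compatible with the inclusions $Q_i\hookrightarrow Q_{i+1}$) defines a locally simple module $X=\underrightarrow{\lim}X_i$ which is a submodule of $Q$. Thus we already have a short exact sequence $0\to X\to Q\to Q/X\to 0$; but that is the \emph{wrong} direction, so the real content is to produce a non-split $0\to Q\to Z\to X\to 0$, i.e. an element of $\ext^1_\gg(X,Q)$ that does not vanish. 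By the adjunction/cohomology identities used in Proposition 3.1, $\ext^1_\gg(X,Q)\simeq H^1(\gg,\hom_\CC(X,Q))\simeq H^1(\gg,(X\otimes Q^\circ)^*)$ where $Q^\circ$ is an integrable module with $(Q^\circ)^*\supset Q$ in the relevant sense; the point of Proposition 3.1 was precisely that $H^1$ of a \emph{dual} of an integrable module vanishes — so the extension we seek must be built by hand rather than extracted from an Ext-vanishing statement.

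Concretely, I would build $Z$ as a direct limit of finite-dimensional $\gg_i$-modules $Z_i$ fitting into exact sequences $0\to Q_i\to Z_i\to X_i\to 0$ that do not split as $\gg_i$-modules, and that are compatible with the transition maps. The hypothesis $\dim\hom_{\gg_i}(X_i,X_{i+1})>2$ is exactly what gives room to do this: among the (at least three) independent embeddings $X_i\hookrightarrow X_{i+1}$ one can choose the transition map $Q_i\to Q_{i+1}$ and a ``twisting'' so that the pushout of the given sequence for $\gg_i$ along $Q_i\to Q_{i+1}$ disagrees with the restriction of the sequence for $\gg_{i+1}$ by a non-zero class, and then correct by an automorphism of $X_{i+1}$ (scaling on each isotypic piece) to make the squares commute while keeping the class non-zero. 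Passing to the limit, $Z:=\underrightarrow{\lim}Z_i$ is an integrable $\gg$-module, the maps $Q\to Z\to X$ are $\gg$-module maps, and the sequence is exact because direct limits are exact. Non-triviality of the extension: if it split, the splitting $X\to Z$ would restrict to splittings $X_i\to Z_i$ for all large $i$ (using local simplicity of $X$, any $\gg$-map out of $X$ is determined by its restrictions to the $X_i$), contradicting our choice.

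The main obstacle I expect is the \emph{compatibility bookkeeping} in the inverse/direct system: one must choose the transition maps $Z_i\to Z_{i+1}$ simultaneously lifting $Q_i\to Q_{i+1}$ and $X_i\to X_{i+1}$, which amounts to solving, at each stage, a lifting problem governed by $\ext^1_{\gg_i}(X_i,Q_{i+1})$ versus $\ext^1_{\gg_{i+1}}(X_{i+1},Q_{i+1})$ and the restriction map between them. The inequality $\dim\hom_{\gg_i}(X_i,X_{i+1})>2$ (rather than merely $\neq 0$) is the quantitative input guaranteeing that this restriction map is not injective on the relevant line, so that a non-zero class survives to the limit; verifying this requires a careful look at how $\ext^1_{\gg_i}$ behaves under the inclusion $\gg_i\subset\gg_{i+1}$ for finite-dimensional modules, e.g. via the Hochschild–Serre-type exact sequence or an explicit cocycle computation with the Young-projector description of the $X_i$. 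Once that is in place, exhaustiveness of the socle filtration and integrability of $Z$ are routine, and the statement follows.
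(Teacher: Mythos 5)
The central device you propose --- building exact sequences $0\to Q_i\to Z_i\to X_i\to 0$ of $\gg_i$-modules ``that do not split'' and then tracking a non-zero class through restriction maps $\ext^1_{\gg_{i+1}}(X_{i+1},Q_{i+1})\to\ext^1_{\gg_i}(X_i,Q_{i+1})$ --- cannot work: each $\gg_i$ is a finite-dimensional semisimple Lie algebra and $X_i,Q_i,Q_{i+1}$ are finite-dimensional, so by Weyl's theorem all of these $\ext^1$ groups vanish and every sequence at a finite level splits. There are no ``non-zero classes'' at any finite stage to disagree by, push forward, or have survive to the limit, so the lifting problem you describe is vacuous and your explanation of where the hypothesis $\dim\hom_{\gg_i}(X_i,X_{i+1})>2$ enters is not salvageable in that form. (A smaller slip: $X=\underrightarrow{\lim}X_i$ need not be a submodule of $Q$, since the given inclusions $Q_i\to Q_{i+1}$ need not carry $X_i$ into $X_{i+1}$; the lemma only asserts that $X$ sits as a quotient of some $Z$ containing $Q$.)

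The non-splitness in the paper's argument lives entirely in the limit, not at finite levels. One sets $Z_i:=X_i\oplus Q_i$ (split!) and twists only the transition maps: $a_i(x,q)=(f_i(x),\,t_i(x)+e_i(q))$ for arbitrary $t_i\in\hom_{\gg_i}(X_i,Q_{i+1})$, where $f_i:X_i\to X_{i+1}$ and $e_i:Q_i\to Q_{i+1}$ are fixed. Then $Z:=\underrightarrow{\lim}Z_i$ is an extension of $X$ by $Q$, and it splits if and only if for large $i$ there exist $p_i\in\hom_{\gg_i}(X_i,Q_i)$ with $t_i=p_{i+1}\circ f_i-e_i\circ p_i$. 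If \emph{every} choice of $\{t_i\}$ admitted such a solution, then with $n_i:=\dim\hom_{\gg_i}(X_i,Q_i)$ one would get $\dim\hom_{\gg_i}(X_i,Q_{i+1})\le n_i+n_{i+1}$; on the other hand $\dim\hom_{\gg_i}(X_i,Q_{i+1})\ge k_i\,n_{i+1}$ with $k_i=\dim\hom_{\gg_i}(X_i,X_{i+1})>2$, forcing $n_{i+1}<n_i$ for all large $i$ --- impossible for positive integers. So the hypothesis ``$>2$'' is used in a purely Hom-space dimension count showing that some choice of twistings $\{t_i\}$ yields a non-split limit, rather than in any computation with $\ext^1_{\gg_i}$, which is identically zero here. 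Your proposal would need to be rebuilt around this (or an equivalent) mechanism; as written, the key step fails.
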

\begin{proof} Fix a sequence of injective
homomorphisms of $\gg_i$-modules $f_i:X_i\to X_{i+1}$ and set
$X=\underrightarrow{\lim}\,X_i $. Let $Z_i:=X_i\oplus Q_i$ and
consider the injective homomorphisms of $\gg_i$-modules
$$\ds a_i:Z_i\to Z_{i+1},\ a_i((x, q)):=(f_i(x), t_i(x)+e_i(q)),$$
where $t_i$ are some injective homomorphisms $X_i{\to}Q_{i+1}$,
$e_i:Q_i\to Q_{i+1}$ are the given inclusions, and $q\in Q_i$,
$x\in X_i$. Put $\ds Z:=\underrightarrow{\lim} Z_i$.

Then, clearly, $Q$ is a submodule of $Z$ and the quotient $Z/Q$ is
isomorphic to $X$. Thus we have constructed an extension of $X$ by
$Q$. This extension splits if and only if for all sufficiently
large $i$ there exist non-zero homomorphisms $p_i:X_i\to Q_i$ such
that $t_i=p_{i+1}\circ f_i-e_i\circ p_i$, see the following diagram:

$$\begin{array}[c]{ccc}
X_{i+1}&\stackrel{p_{i+1}}{\rightarrow}&Q_{i+1}\\
\uparrow\scriptstyle{f_i}&\scriptstyle{t_i}\nearrow &\uparrow\scriptstyle{e_i}\\
X_{i}&\stackrel{p_i}{\rightarrow}&Q_{i}.
\end{array}$$

 Assume that for any
choice of $\{t_i\}$ such a splitting exists. If
$n_i:=\dim\hom_{\gg_i}(X_i,Q_i)$, this assumption implies
$$\dim \hom_{\gg_i}(X_i,Q_{i+1})\leq n_i+n_{i+1}.$$
On the other hand, $\dim \hom_{\gg_i}(X_i,Q_{i+1})\geq k_i
n_{i+1}$ where $k_i:=\dim\hom_{\gg_i}(X_i,X_{i+1})$. Since
$k_i>2$, we have $n_{i+1}< n_i$.  As $n_i>0$ for all $i$, we
obtain a contradiction.
\end{proof}
\begin{corollary}\label{ex} Let $Q\in\Int_\gg$ be a simple $\gg$-module
satisfying the assumption of Lemma \ref{extension}. Then $Q$ admits no
non-zero homomorphism into an injective object of $\Int_\gg$ of
finite Loewy length.
\end{corollary}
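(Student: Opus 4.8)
The plan is to derive a contradiction from the existence of a non-zero homomorphism $\phi:Q\to J$ with $J$ injective in $\Int_\gg$ of finite Loewy length. Since $Q$ is simple, any non-zero $\phi$ is injective, so we may regard $Q$ as a submodule of $J$. The key idea is to exploit the non-trivial extension $0\to Q\to Z\to X\to 0$ produced by Lemma \ref{extension}: because $J$ is injective, the inclusion $Q\hookrightarrow J$ extends to a map $Z\to J$, and this would split the sequence unless $Z$ fails to embed — so instead I would argue that injectivity forces the extension class of $Z$ into $\ext^1_\gg(X,J)$, and then show that $\ext^1_\gg(X,J)=0$ because $J$ has finite Loewy length, contradicting non-triviality of the extension.

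More precisely, first I would apply Lemma \ref{extension} with $Q_i$ an exhausting sequence of finite-dimensional $\gg_i$-submodules of $Q$ (which exists since $Q$ is a countable-dimensional integrable module, hence locally finite); the hypothesis "$Q$ satisfies the assumption of Lemma \ref{extension}" gives simple submodules $X_i\subset Q_i$ with $\dim\hom_{\gg_i}(X_i,X_{i+1})>2$ for large $i$, yielding a locally simple $X=\underrightarrow{\lim}X_i$ and a non-split sequence $0\to Q\to Z\to X\to 0$. Thus $0\neq[Z]\in\ext^1_\gg(X,Q)$. Second, I would push this class forward along $\phi:Q\hookrightarrow J$ to get $\phi_*[Z]\in\ext^1_\gg(X,J)$; by injectivity of $J$ in $\Int_\gg$ we have $\ext^1_\gg(X,J)=0$, so $\phi_*[Z]=0$, meaning the pushout $Z'$ of $Z$ along $\phi$ splits: $Z'\cong J\oplus X$ as $\gg$-modules. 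Third — and this is the crux — I would deduce that $X$ itself embeds in $Z'\cong J\oplus X$ in a way compatible with the original maps, and use the structure of $Z$ together with the splitting of $Z'$ to produce, for large $i$, non-zero homomorphisms $p_i:X_i\to Q_i$ with $t_i=p_{i+1}\circ f_i-e_i\circ p_i$, which is exactly the splitting condition that the proof of Lemma \ref{extension} showed cannot hold. Alternatively, and perhaps more cleanly, one argues directly: the composite $Z\to Z'\cong J\oplus X\to J$ restricts to $\phi$ on $Q$, hence gives a retraction of $Z\to X$ up to the embedding $Q\subset J$, and chasing this back through the finite levels forces the forbidden $p_i$'s.

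The main obstacle I anticipate is the third step: passing from the abstract statement "$\phi_*[Z]=0$ in $\ext^1_\gg$" back to the concrete level-wise splitting data $\{p_i\}$ that contradicts the computation $n_{i+1}<n_i$ in the proof of Lemma \ref{extension}. The subtlety is that a $\gg$-module splitting of the pushed-out sequence need not restrict to $\gg_i$-module splittings level by level in an obviously coherent direct-system fashion; one must check that a $\gg$-equivariant section $s:X\to Z'$ can be approximated by, or decomposed into, $\gg_i$-equivariant maps on the finite-dimensional pieces $X_i$, and that the resulting maps $X_i\to Q_i$ inherit the needed relation with $t_i$. This requires care with the direct-limit structure and with the fact that the section lands in $J\oplus X$ rather than in $Z$ itself; tracking the $Q$-component of $s$ restricted to each $X_i$ and comparing with $\phi$ should, after unwinding, yield precisely the maps $p_i$. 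Once that bookkeeping is done, the inequality $n_{i+1}<n_i$ with all $n_i>0$ gives the contradiction, completing the proof.
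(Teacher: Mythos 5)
There is a genuine gap, and it is fatal to the whole strategy rather than just to the bookkeeping in your third step. The vanishing of $\phi_*[Z]$ in $\ext^1_\gg(X,J)$ is automatic for \emph{any} injective object $J$ of $\Int_\gg$ and carries no information: the long exact sequence shows that the pushforward $\ext^1_\gg(X,Q)\to\ext^1_\gg(X,J)$ kills every class as soon as $J$ is injective, so the non-splitness of $0\to Q\to Z\to X\to 0$ is in no tension with the splitting of the pushed-out sequence. Notice that your argument never actually uses the finite Loewy length of $J$; but $\Int_\gg$ has enough injectives (Corollary \ref{hull}), so $Q$ certainly admits a non-zero homomorphism into \emph{some} injective object, and therefore no argument that ignores the Loewy length hypothesis can possibly succeed. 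Concretely, your crux step cannot be repaired: a $\gg$-equivariant section $X\to Z'\cong J\oplus X$ gives, at each finite level, maps $X_i\to J$, not maps $p_i:X_i\to Q_i$, and the inequality $n_{i+1}<n_i$ in the proof of Lemma \ref{extension} only obstructs retractions of $Z$ onto $Q$ (splittings of the \emph{original} sequence), which your construction never produces. A single extension cannot give a contradiction for a more basic reason as well: $Z$ itself has Loewy length $2$ (socle $Q$, top $X$), so it maps perfectly well into injectives of finite Loewy length.

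The paper's proof uses the finiteness of the Loewy length in an essential way by \emph{iterating} the construction of Lemma \ref{extension}: for every $m>0$ one builds an integrable module $Z^{(m)}\supset Q$, as a direct limit of $\gg_i$-modules $Z^{(m)}_i=X_i\oplus X_i\oplus Z^{(m-2)}_i$ with suitably chosen connecting maps, so that $\soc(Z^{(m)})=Q$ while $Z^{(m)}$ is a non-split extension of $X$ by $Z^{(m-1)}$ and $Z^{(m)}/Z^{(m-2)}$ is a non-split self-extension of $X$; hence the Loewy length of $Z^{(m)}$ exceeds $m$. If now $\phi:Q\to J$ were a non-zero homomorphism into an injective $J$ of finite Loewy length, injectivity of $J$ extends $\phi$ to $Z^{(m)}\to J$ for $m$ larger than the Loewy length of $J$, and since $Q$ is the socle of $Z^{(m)}$ this extension cannot lose the large Loewy length inside $J$ --- a contradiction. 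So the missing idea in your proposal is precisely the construction of modules with socle $Q$ of \emph{arbitrarily large} Loewy length; once you have those, no homological algebra with $\ext^1$ is needed, only the defining extension property of $J$.
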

\begin{proof}For any $m>0$ we will now construct
an integrable module $Z^{(m)}\supset Q$ whose socle equals $Q$ and whose Loewy
length is greater than $m$. For $m=1$ this was done in Lemma
\ref{extension}. Proceeding by induction, we set
$$Z_i^{(m)}:=X_i\oplus Z_i^{(m-1)}=X_i\oplus(X_i\oplus Z_i^{(m-2)})$$
and define $a_i^{(m)}:Z_i^{(m)}\to Z_{i+1}^{(m)}$ by
$$a_i^{(m)}(x,x',z)=(f_i(x),r_i^{(m-1)}(x)+f_i(x'),t_i^{(m-2)}(x')+q_i^{(m-2)}(z)),$$
where now $\{t_i^{(m-2)}\}$ is a set of non-zero homomorphisms
$t_i^{(m-2)}:X_i\to Z_{i+1}^{(m-2)}$ and $\{r_i^{(m-1)}\}$ is a
set of non-zero homomorphisms $r_i^{(m-1)}:X_i\to X_{i+1}$. As in
the proof of Lemma \ref{extension} one can choose
$\{t_i^{(m-2)}\}$ and $\{r_i^{(m-1)}\}$ so that $Z^{(m)}$ is a
non-split extension of $X$ by $Z^{(m-1)}$, and $Z^{(m)}/Z^{(m-2)}$
is a non-split self-extension of $X$. Therefore the Loewy length
of $Z^{(m)}$ is greater than $m$. The statement follows. 
\end{proof}
\begin{lemma}\label{locally} Let $Q=\underrightarrow{\lim} Q_i\in\Int_\gg$ 
be a simple $\gg$-module which admits a non-zero homomorphism into an 
injective object of $\Int_\gg$ of finite
Loewy length. Then there exist $n\in\ZZ_{>0}$ and a direct system
of simple $\gg_i$-submodules $S_i$ of $Q$ such that
$Q=\underrightarrow{\lim} S_i$ and $\dim\hom_{\gg_i}
(S_i,S_{j})=1$ for all $j>i>n$.
\end{lemma}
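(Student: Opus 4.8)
\textbf{Proof plan for Lemma \ref{locally}.}

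The plan is to prove the contrapositive does the wrong thing, so instead I would argue directly by combining Lemma \ref{extension} and Corollary \ref{ex}. Suppose $Q=\underrightarrow{\lim} Q_i$ is simple and admits a non-zero homomorphism into an injective object $J\in\Int_\gg$ of finite Loewy length. Since $Q$ is simple, such a homomorphism is injective, so we may assume $Q\subset J$. The key dichotomy is the following: for every sufficiently large $i$, writing $Q_i$ as a semisimple $\gg_i$-module, one wants to select a single simple $\gg_i$-summand $S_i$ of $Q$ that "propagates" with multiplicity exactly one into $S_{i+1}$. First I would observe that, because $Q$ is locally simple? — no, $Q$ need not be locally simple a priori — so instead I work with the whole $Q_i$. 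The point is that Corollary \ref{ex} tells us the hypothesis of Lemma \ref{extension} must \emph{fail} for $Q$: there is no choice of simple $\gg_i$-submodules $X_i\subset Q_i$ with $\dim\hom_{\gg_i}(X_i,X_{i+1})>2$ for all large $i$, for otherwise $Q$ would embed in no injective of finite Loewy length, contradicting our assumption.

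Next I would extract from this failure the existence of the desired chain $\{S_i\}$. The subtlety is that the failure of Lemma \ref{extension}'s hypothesis is a statement "for every coherent choice of $X_i$'s, infinitely often $\dim\hom_{\gg_i}(X_i,X_{i+1})\le 2$", and one must upgrade this to "there is a coherent choice with multiplicity exactly $1$ eventually." Here I would use a König's-lemma / pigeonhole argument on the tree of simple $\gg_i$-constituents of $Q$: since $Q=\underrightarrow{\lim}Q_i$ and $Q$ is simple, every simple $\gg_i$-constituent $S_i$ of $Q$ maps nontrivially into $Q_{i+1}$, hence into some simple $\gg_{i+1}$-constituent $S_{i+1}$; this gives an infinite, finitely-branching tree, so by König's lemma there is an infinite branch $S_n\subset S_{n+1}\subset\cdots$ (after identifying along the chosen maps) with $\underrightarrow{\lim}S_i$ a nonzero — hence, by simplicity of $Q$, equal to — submodule of $Q$. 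Along such a branch $\dim\hom_{\gg_i}(S_i,S_{i+1})\ge 1$ always. The claim is that we can choose the branch so that this dimension is eventually $1$. If not, then along every branch it is $\ge 2$ infinitely often; by a further pigeonhole/compactness argument one then builds a coherent choice of $X_i=S_i$ along which $\dim\hom_{\gg_i}(X_i,X_{i+1})>2$ infinitely often — and a small refinement (passing to a subexhaustion $\gg_{i_1}\subset\gg_{i_2}\subset\cdots$, composing the maps so multiplicities multiply) turns "$>2$ infinitely often" into "$>2$ for all terms of the subexhaustion", which is exactly the hypothesis of Lemma \ref{extension} for the exhaustion $\{\gg_{i_k}\}$. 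That contradicts Corollary \ref{ex}. Hence some branch has $\dim\hom_{\gg_i}(S_i,S_{i+1})=1$ for all $i$ past some $n$.

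Finally I would promote "$\dim\hom_{\gg_i}(S_i,S_{i+1})=1$" to "$\dim\hom_{\gg_i}(S_i,S_j)=1$ for all $j>i>n$." This is the easy bookkeeping step: the composite of the fixed embeddings $S_i\hookrightarrow S_{i+1}\hookrightarrow\cdots\hookrightarrow S_j$ is nonzero, so $\dim\hom_{\gg_i}(S_i,S_j)\ge 1$; conversely any $\gg_i$-homomorphism $S_i\to S_j$ restricts through the chain, and since each single step has a one-dimensional $\hom$ space, reverse induction on the length of the chain (using that $\hom_{\gg_i}(S_i,S_{i+1})$ being one-dimensional forces every $\gg_i$-map $S_i\to S_{i+1}$ to be a scalar multiple of the fixed one, and that $S_{i+1}$ sits inside $S_j$ with the $\gg_{i+1}$—hence $\gg_i$—structure controlled) gives $\dim\hom_{\gg_i}(S_i,S_j)\le 1$. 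Then $Q=\underrightarrow{\lim}S_i$ by simplicity, completing the proof.

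I expect the main obstacle to be the middle step: carefully organizing the König's-lemma argument on the tree of $\gg_i$-constituents and, crucially, engineering the subexhaustion so that the sporadic occurrences of multiplicity $>2$ become multiplicity $>2$ at \emph{every} stage, so that Lemma \ref{extension} and Corollary \ref{ex} can be invoked verbatim. One has to be attentive to the fact that Lemma \ref{extension} is stated for the ambient exhaustion $\{\gg_i\}$, so replacing it by a subexhaustion must be justified (an integrable module is still integrable, and its socle filtration is unchanged, when restricted to a cofinal subchain), and one must check that along the subexhaustion the relevant $\hom$-dimensions genuinely multiply, i.e. $\dim\hom_{\gg_{i_1}}(S_{i_1},S_{i_3})\ge \dim\hom_{\gg_{i_1}}(S_{i_1},S_{i_2})\cdot\dim\hom_{\gg_{i_2}}(S_{i_2},S_{i_3})$ type estimates, which is where the inequality "$>2$ forever" is actually produced.
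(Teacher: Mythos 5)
Your overall strategy (organize the simple $\gg_i$-constituents of the $Q_i$ into a level-finite directed graph, use Corollary \ref{ex} to forbid the hypothesis of Lemma \ref{extension}, and turn sporadic multiplicities $\geq 2$ into multiplicities $>2$ along a subexhaustion via the multiplicativity inequality $\dim\hom_{\gg_i}(S_i,S_k)\geq\dim\hom_{\gg_i}(S_i,S_j)\cdot\dim\hom_{\gg_j}(S_j,S_k)$) is indeed the paper's engine. But your final step is a genuine gap, not bookkeeping: eventual \emph{one-step} multiplicity one does not imply the \emph{all-pairs} statement $\dim\hom_{\gg_i}(S_i,S_j)=1$ for $j>i>n$, which is exactly what the lemma asserts and what Lemma \ref{unique} and Proposition \ref{mult} need. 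A $\gg_i$-homomorphism $S_i\to S_j$ need not factor through $S_{i+1}$: the multiplicity of $S_i$ in $S_j$ counts copies of $S_i$ in \emph{all} $\gg_{i+1}$-isotypic components of $S_j$, not only the $S_{i+1}$-isotypic one. Concretely, for the corner embeddings $sl(2)\subset sl(3)\subset sl(4)$ take $S_i=\CC^2$, $S_{i+1}=\CC^3$, $S_{i+2}=\Lambda^2\CC^4$: then $\dim\hom_{sl(2)}(\CC^2,\CC^3)=\dim\hom_{sl(3)}(\CC^3,\Lambda^2\CC^4)=1$, while $\Lambda^2\CC^4|_{sl(2)}\simeq 2\,\CC^2\oplus 2\,\CC$, so $\dim\hom_{sl(2)}(\CC^2,\Lambda^2\CC^4)=2$. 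The repair is to run your contradiction argument against the all-pairs property directly: if for every $n$ there are $k>j>n$ with $\dim\hom_{\gg_j}(S_j,S_k)\geq 2$, compose two such occurrences and pass to a subexhaustion to get $>2$ at every step, then invoke Lemma \ref{extension} and Corollary \ref{ex}. This is in effect what the paper does, after first showing that from some vertex on the graph of isotypic components is a string.

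A second, smaller but real gap is the parenthetical ``after identifying along the chosen maps.'' The nonzero $\gg_i$-maps $f_i:S_i\to S_{i+1}$ produced by your K\"onig argument are not restrictions of the inclusions $Q_i\subset Q_{i+1}$ (the copy $S_i\subset Q_{i+1}$ may sit diagonally across several $\gg_{i+1}$-isotypic components), so $\underrightarrow{\lim}(S_i,f_i)$ is only an abstract locally simple module with no natural nonzero map to $Q$; you cannot conclude that it is a submodule of $Q$, let alone equal to $Q$, and the lemma requires a direct system of simple \emph{submodules} of $Q$ exhausting $Q$. The paper supplies this last step separately: after the multiplicity-one structure is established it fixes $s\in S_n$ and resets $S_j:=U(\gg_j)\cdot s$, checking that these are simple and nested with limit $Q$. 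Your write-up needs an analogous argument.
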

\begin{proof} Decompose each $Q_i$ into a
direct sum of isotypic components, $Q_i=Q_i^1\oplus\dots\oplus Q_i^{l(i)}$. 
We define a directed graph $\Gamma$  as follows. The set of
vertices $V(\Gamma)$ is by definition $\{Q_i^j\}$, and
$V(\Gamma)=\cup_{i>0}V(\Gamma)_i$, where
$V(\Gamma)_i=\{Q_i^1,\dots,Q_i^{l(i)}\}$. An edge $A\to B$ belongs
to $\Gamma$ if $A\in V(\Gamma_i)$, $B\in V(\Gamma_{i+1})$ and
$\hom_{\gg_i} (A,B)\neq 0$.

Let $\Gamma_{>i}$ be the full subgraph of $\Gamma$ whose set of
vertices equals $\cup_{k>i}V(\Gamma)_k$. For any vertex $A$ of
$\Gamma$ we denote by $V(A)$ the set of vertices $B$ such that
there is a directed path from $A$ to $B$. Let $\Gamma(A)$ be the full
subgraph of $\Gamma$ whose set of vertices equals $V(A)$, and
$\Gamma(A)_{>i}$ be the full subgraph of $\Gamma(A)$ whose set of
vertices equals $\cup_{k>i}(V(\Gamma)_k\cap V(A))$. Note that the
simplicity of $Q$ implies that $\Gamma_{>i}$ and $\Gamma(A)_{>i}$
are connected. In particular, if $\Gamma(A)$ is a tree, then
$\Gamma(A)$ is just a string.

We will now prove that there exists a vertex $A$ such that
$\Gamma(A)$ is a tree. Indeed, assume the contrary. This implies
that one can find an infinite sequence of vertices $A_1\in
V(\Gamma)_{i_1},A_2\in V(\Gamma)_{i_2},\dots$ such that the number
of paths from $A_n$ to $A_{n+1}$ is greater than 2 for all $n$.
Then $Q=\underrightarrow{\lim} Q_{i_k}$. In addition, one can
easily see that $Q$ satisfies the assumption of Lemma
\ref{extension} and hence $Q$ admits no non-zero homomorphism into
an injective object of $\Int_\gg$ of finite Loewy length.
Contradiction.

Fix now $A\in V(\Gamma)_i$ such that $\Gamma(A)$ is a tree. Then,
as we mentioned above, $V(\Gamma)$ is necessarily a string
$A_i=\{A\to A_{i+1}\to A_{i+2}\dots\}$. Let $S_j$ be a simple
submodule of $A_j$, $j\geq i$. Then by Lemma \ref{extension} there
exists $n$, such that $\dim\hom_{\gg_j}(S_j,S_k)=1$ for any
$k>j\geq n$. Fix $s\in S_n$ and set $S_j=U(\gg_j)\cdot s$ for all
$j\geq n$. Then $S_j$ are simple and $Q=\underrightarrow{\lim}
S_j$ satisfies the condition in the lemma. 
\end{proof}
\begin{lemma}\label{unique} Let $Q=\underrightarrow{\lim} S_i\in\Int_\gg$,
where $S_i$ are simple $\gg_i$-modules such that, for some
$n$, $\dim\hom_{\gg_i}(S_i,S_{j})=1$ for all $j>i>n$. Then $Q^*$ has a
unique simple submodule $Q_*$, and $Q_*\in\Int_\gg$.
\end{lemma}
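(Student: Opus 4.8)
\emph{Construction of $Q_*$.} The plan is to build $Q_*$ by hand as an increasing union of finite-dimensional pieces of $Q^*$ and then to use the multiplicity-one hypothesis to show these pieces are the only places inside $Q^*$ where $S_i^*$ can sit. I would first fix $\gg_i$-module embeddings $S_i\subseteq S_{i+1}$ realizing the one-dimensional spaces $\hom_{\gg_i}(S_i,S_{i+1})$, so that $Q=\bigcup_{i>n}S_i$. For $i>n$ the hypothesis forces the $S_i$-isotypic component of the semisimple $\gg_i$-module $Q$ to be exactly $S_i$; write $Q=S_i\oplus R_i$ for the corresponding $\gg_i$-decomposition. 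Since $S_i\subseteq S_{i+1}$ and $S_{i+1}$ is likewise its own isotypic component, one checks $R_{i+1}\subseteq R_i$. Put $\hat S_i:=R_i^{\perp}\subseteq Q^*$; restriction to $S_i$ identifies $\hat S_i$ with $S_i^*$ as $\gg_i$-modules, and $R_{i+1}\subseteq R_i$ gives $\hat S_i\subseteq\hat S_{i+1}$. Set $Q_*:=\bigcup_{i>n}\hat S_i$. Since $g\hat S_i\subseteq\hat S_{\max(i,k)}$ for $g\in\gg_k$, $Q_*$ is a $\gg$-submodule of $Q^*$; it is exhausted by the finite-dimensional $\gg_i$-modules $\hat S_i$, hence integrable, and being $\underrightarrow{\lim}\,\hat S_i$ with each $\hat S_i$ simple it is locally simple, hence simple.

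\emph{The structural step.} Next I would record the main consequence of multiplicity one. For $i>n$ restriction to $S_i$ is a surjective $\gg_i$-map $\rho_i\colon Q^*\to S_i^*$ whose kernel $K_i=S_i^{\perp}$ is $\gg_i$-isomorphic to $R_i^*$, and $\rho_i|_{\hat S_i}\colon\hat S_i\to S_i^*$ is an isomorphism; thus $Q^*=\hat S_i\oplus K_i$ as $\gg_i$-modules. As $R_i$ is semisimple with no constituent isomorphic to $S_i$, we get $\hom_{\gg_i}(S_i^*,K_i)\simeq\hom_{\gg_i}(R_i,S_i)=0$. Hence $\hat S_i$ is the unique $\gg_i$-submodule of $Q^*$ isomorphic to $S_i^*$.

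\emph{Uniqueness.} I would then compute $\hom_\gg(Q_*,Q^*)\simeq\hom_\gg(Q_*\otimes Q,\CC)$. Since $Q_*\otimes Q=\underrightarrow{\lim}\,(\hat S_i\otimes S_i)$ and $\hat S_i\otimes S_i\simeq S_i^*\otimes S_i$ over $\gg_i$, this equals $\underleftarrow{\lim}\,\hom_{\gg_i}(S_i^*\otimes S_i,\CC)\simeq\underleftarrow{\lim}\,\hom_{\gg_i}(S_i^*,S_i^*)$; each term is one-dimensional by Schur, and the restriction maps take the evaluation pairing to the evaluation pairing, so they are isomorphisms. Hence $\hom_\gg(Q_*,Q^*)$ is one-dimensional, spanned by the inclusion, and in particular any submodule of $Q^*$ isomorphic to $Q_*$ equals $Q_*$. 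Now let $W\subseteq Q^*$ be a simple submodule. The $\gg_i$-submodules $\ker(\rho_i|_W)$ form a decreasing chain with intersection $0$, so $\rho_i(W)=S_i^*$ for all $i\geq i_0$, some $i_0>n$. If $W$ is integrable, then $W|_{\gg_i}$ is semisimple, $\rho_i|_W$ splits over $\gg_i$, so $W$ contains a $\gg_i$-submodule isomorphic to $S_i^*$, which by the structural step is $\hat S_i$; since $\hat S_i$ generates the simple $\gg_j$-module $\hat S_j$ for all $j\geq i\geq i_0$, this forces $Q_*\subseteq W$, whence $W=Q_*$. In particular $\soc(\Gamma_\gg(Q^*))=Q_*$.

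\emph{The main obstacle.} What remains, and is the delicate part, is to rule out a \emph{non-integrable} simple submodule $W\subseteq Q^*$, equivalently to prove $\soc(Q^*)\subseteq\Gamma_\gg(Q^*)$. A simple submodule not contained in $\Gamma_\gg(Q^*)$ meets it trivially and hence embeds into $Q^*/\Gamma_\gg(Q^*)$; since $\Gamma_\gg(Q^*)$ is the largest integrable submodule of $Q^*$ and $\Int_\gg$ is closed under extensions, $Q^*/\Gamma_\gg(Q^*)$ has no nonzero integrable submodule, so such a $W$ is indeed non-integrable, and then the structural step gives $W\cap\hat S_i=0$ for all large $i$. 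I expect the real work to lie in deriving a contradiction from the existence of such a $W$, using the inverse-limit description $Q^*=\underleftarrow{\lim}\,S_i^*$ together with a second consequence of multiplicity one: $S_i^*$ occurs in $Q^*$ only inside the layer $\hat S_i$ and in none of the kernels of the transition maps $S_{i+1}^*\to S_i^*$. All the preceding steps are routine; this exclusion of non-integrable simple submodules is the crux.
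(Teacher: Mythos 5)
Your construction of $Q_*$ and your ``structural step'' are exactly the paper's argument, only written out in more detail: the paper notes that the hypothesis gives $\dim\hom_{\gg_i}(S_i,Q)=1$, hence $\dim\hom_{\gg_i}(S_i^*,Q^*)=1$, defines $Q_*=\underrightarrow{\lim}S_i^*$ as the (unique) simple integrable submodule, and for a simple submodule $S\subset Q^*$ uses that some projection $S\to S_i^*$ is nonzero. The problem is the step you explicitly leave open. The lemma asserts that $Q_*$ is the unique simple submodule of $Q^*$, with integrability of $Q_*$ as part of the conclusion; what you have actually proved is only that $Q_*$ is the unique simple submodule contained in $\Gamma_\gg(Q^*)$ (equivalently, the unique integrable simple submodule), and you concede you cannot exclude a non-integrable simple submodule. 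Relative to the statement being proved, that is a genuine gap, not a routine detail: your own analysis shows that for such a hypothetical $W$ one would have $W\cap\hat S_i=0$ for all $i$ while $W$ still surjects onto every $S_i^*$, and nothing in your write-up rules this out.

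For comparison, the paper closes this point in one sentence: for an arbitrary simple submodule $S\subset Q^*$ it passes from $\hom_{\gg_i}(S,S_i^*)\neq 0$ directly to ``$S_i^*\subset S$, since the multiplicity of $S_i^*$ in $Q^*$ is $1$,'' and then concludes $S=Q_*$; no distinction between integrable and non-integrable $S$ is made, and no splitting argument is spelled out. So the ``crux'' you isolate is precisely the step the paper dispatches by invoking multiplicity one for an arbitrary simple submodule -- a deduction which, as you correctly observe, is immediate when $S$ is a semisimple $\gg_i$-module (e.g.\ when $S$ is integrable) but is not justified further in the paper for a general $S$. If you want to reproduce the paper's proof, you should state that deduction; your remark that it implicitly requires the surjection $S\to S_i^*$ to admit a $\gg_i$-splitting is a fair gloss on that sentence. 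It is also worth noting that in every place the paper actually uses this lemma (Proposition 5.1, Corollary 5.7, Lemma 6.6 and its corollaries) the relevant dual modules or simple submodules are integrable, so the weaker statement you did prove would suffice there; but as a proof of the lemma as stated, your proposal is incomplete at exactly the point you flag.
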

\begin{proof} The condition on $Q$ implies
that $\dim\hom_{\gg_i}(S_i,Q)=1$ for all sufficiently large $i$.
Therefore $\dim\hom_{\gg_i}(S^*_i,Q^*)=1$ for all sufficiently
large $i$. Note also that $Q_*=\underrightarrow{\lim} S^*_i$ is
uniquely defined (as $\dim\hom_{\gg_i}(S_i,S_{i+1})=1$) and is a
simple integrable submodule of $Q^*$. Let $S$ be some simple submodule of
$Q^*$. Since $\ds Q^*=\lim_{\gets}S_i^*$ and
$\hom_{\gg}(S,Q^*)\neq 0$, we have $\hom_{\gg_i}(S,S_i^*)\neq 0$
for some $i$. Therefore $S_i^*\subset S$ as the multiplicity of
$S^*_i$ in $Q^*$ is 1. This implies $S=Q_*$.
\end{proof}

We are now ready to prove \refprop{mult}. \\{\it Proof of Proposition} \ref{mult}. Fix $0\neq m\in M$ and put $M_i:=U(\gg_i)\cdot
m$. Then, by the simplicity of $M$, we have
$M=\underrightarrow{\lim} M_i$. Since $\Gamma_\gg(M^*)$ has finite
Loewy length, $M^*$ has a simple submodule $Q$. By
\refle{locally}, $Q$ satisfies the assumption of \refle{unique}.
The composition of the canonical injection $M\to (M^*)^*$ and the
dual map $(M^*)^*\to Q^*$ defines an injective homomorphism $M\to Q^*$. By
\refle{unique} $M\simeq Q_*$ and, since $Q_*$ also satisfies the assumption
of Lemma \ref{unique}, we conclude that the claim of Proposition
\ref{mult} holds for $M$.$\Box$

The following statement is a direct consequence of Proposition \ref{mult}.

\begin{corollary}\label{ffin}Let $M\in\Int_\gg$ be a simple $\gg$-module such that $\Gamma_{\gg}(M^*)$
  has finite Loewy length. Then for any sufficiently large $i$
  there exists a simple $\gg_i$-module $N$ such that $\dim\hom_{\gg_i}
  (N,M)=1$.
\end{corollary}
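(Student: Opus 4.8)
The plan is to take for $N$ the simple $\gg_i$-module $M_i$ furnished by \refprop{mult}. First I would apply \refprop{mult} to our simple module $M$: this yields $n\in\ZZ_{>0}$ and a direct system of simple finite-dimensional $\gg_i$-modules $M_i$ with $M=\underrightarrow{\lim}\,M_i$ and $\dim\hom_{\gg_i}(M_i,M_j)=1$ for all $j>i>n$. Fix any $i>n$ and set $N:=M_i$, which is simple by construction; it then remains only to check that $\dim\hom_{\gg_i}(N,M)=1$.

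For that I would use that $M_i$ is finite-dimensional, so the functor $\hom_{\gg_i}(M_i,-)$ commutes with direct limits; hence $\hom_{\gg_i}(N,M)\cong\underrightarrow{\lim}_{j}\hom_{\gg_i}(M_i,M_j)$, the transition maps being post-composition with the inclusions $M_j\hookrightarrow M_{j+1}$. Composing a non-zero $\gg_i$-homomorphism $M_i\to M_j$ with such an inclusion is again non-zero, so each transition map is injective; and since for $j>i>n$ both its source and target are one-dimensional, it is in fact an isomorphism. Therefore the direct limit is one-dimensional, i.e. $\dim\hom_{\gg_i}(N,M)=1$, as desired.

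I do not expect any genuine obstacle here: the whole substance of the corollary is already packaged in \refprop{mult}. The only points deserving a word are the interchange of $\hom_{\gg_i}(M_i,-)$ with the direct limit and the injectivity of the transition maps, and both are immediate consequences of the finite-dimensionality of $M_i$ together with the fact that the structure maps $M_j\hookrightarrow M_{j+1}$ of the exhaustion are injective.
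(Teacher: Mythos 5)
Your argument is correct and is exactly the intended derivation: the paper states Corollary \ref{ffin} as a direct consequence of Proposition \ref{mult}, and your choice $N:=M_i$ together with the observation that $\hom_{\gg_i}(M_i,M)=\varinjlim_j\hom_{\gg_i}(M_i,M_j)$ is one-dimensional (injective transition maps between one-dimensional spaces) is precisely the omitted verification.
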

The next corollary is a direct consequence of \refle{unique} and Proposition \ref{mult}.

\begin{corollary}\label{simplesoc}Let $M\in\Int_\gg$ be a simple $\gg$-module such that $\Gamma_{\gg}(M^*)$
  has finite Loewy length. Then $M^*$ has a unique simple submodule $M_*$, and $M_*\in\Int_\gg$.
\end{corollary}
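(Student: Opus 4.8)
The statement to prove is Corollary~\ref{simplesoc}: for $M\in\Int_\gg$ simple with $\Gamma_\gg(M^*)$ of finite Loewy length, $M^*$ has a unique simple submodule $M_*$, and $M_*\in\Int_\gg$.

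The plan is to simply combine Proposition~\ref{mult} with Lemma~\ref{unique}. First I would invoke Proposition~\ref{mult}: since $M$ is simple and $\Gamma_\gg(M^*)$ has finite Loewy length, there exist $n\in\ZZ_{>0}$ and a direct system $\{M_i\}$ of simple finite-dimensional $\gg_i$-modules with $M=\underrightarrow{\lim}\,M_i$ and $\dim\hom_{\gg_i}(M_i,M_j)=1$ for all $j>i>n$. This says precisely that $M$, written as $\underrightarrow{\lim}\,M_i$, satisfies the hypothesis of Lemma~\ref{unique} (with $S_i=M_i$).

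Next I would apply Lemma~\ref{unique} to $Q=M$: it yields that $M^*$ has a unique simple submodule, which we name $M_*$, and moreover $M_*=\underrightarrow{\lim}\,M_i^*\in\Int_\gg$. That is exactly the claim of the corollary, so nothing further is needed.

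There is essentially no obstacle here, as both ingredients are already established; the only point requiring a word of care is the verification that the output of Proposition~\ref{mult} matches the input format of Lemma~\ref{unique}, i.e.\ that the $\dim\hom_{\gg_i}(M_i,M_{i+1})=1$ condition (a special case of $j>i>n$) is what guarantees $M_*=\underrightarrow{\lim}\,M_i^*$ is well defined as a direct limit. I would phrase the proof in two sentences.

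\medskip
\noindent{\it Proof of Corollary} \ref{simplesoc}. By Proposition \ref{mult} there exist $n\in\ZZ_{>0}$ and a direct system $\{M_i\}$ of simple finite-dimensional $\gg_i$-modules with $M=\underrightarrow{\lim}\,M_i$ and $\dim\hom_{\gg_i}(M_i,M_j)=1$ for all $j>i>n$. Hence $M$ satisfies the hypothesis of Lemma \ref{unique}, and that lemma (applied to $Q=M$) shows that $M^*$ has a unique simple submodule $M_*$ and that $M_*\in\Int_\gg$. $\Box$
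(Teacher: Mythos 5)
Your proof is correct and is exactly the paper's argument: the paper states the corollary as a direct consequence of Proposition \ref{mult} combined with Lemma \ref{unique}, which is precisely what you do. Nothing is missing.
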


\begin{theo}\label{fin} Let $\gg$ be a locally simple algebra which has a
  non-trivial module $M$ such that $M^*$ is integrable and has finite
  Loewy length, then $\gg$ is isomorphic to $sl(\infty)$, $o(\infty)$
  or $sp(\infty)$.
\end{theo}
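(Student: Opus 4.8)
The plan is to combine Proposition~\ref{diag} with the refined structural information coming from Proposition~\ref{mult}. By Proposition~\ref{diag}, the existence of a non-trivial $M\in\Int_\gg$ with $M^*$ integrable already forces $\gg$ to be diagonal, so after passing to a subexhaustion we may assume that $\gg=\bigcup_i\gg_i$ with all $\gg_i$ classical simple of the same type (A, B, C or D) and that for $i\ge 1$ the natural $\gg_i$-module $V_i$ restricted to $\gg_{i-1}$ is $k_iV_{i-1}\oplus l_iV_{i-1}^*\oplus\CC^{s_i}$. The task is to rule out, using the additional hypothesis that $M^*$ has finite Loewy length, every diagonal $\gg$ other than $sl(\infty)$, $o(\infty)$, $sp(\infty)$; equivalently, to show that the multiplicities $k_i,l_i$ (and $s_i$, in the non-trivial cases) can be taken to be $0$ or $1$ so that the exhaustion becomes the standard finitary one.

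First I would replace $M$ by one of its simple subquotients: since $M^*$ is integrable and has finite Loewy length, $M$ has a simple subquotient $M'$ still lying in $\Int_\gg$, and a short argument (using that $\Gamma_\gg((M')^*)$ is a subquotient-type object of finite Loewy length, or directly that $(M')^*$ is integrable and of finite Loewy length since it is a sub or quotient of $M^*$) shows $\Gamma_\gg((M')^*)$ has finite Loewy length. So we may assume $M$ itself is simple. Now Proposition~\ref{mult} applies: there is $n$ and a direct system of simple finite-dimensional $\gg_i$-modules $M_i$ with $M=\underrightarrow{\lim}M_i$ and $\dim\hom_{\gg_i}(M_i,M_j)=1$ for all $j>i>n$. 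The key point is that this multiplicity-one condition is extremely restrictive on how $V_{i+1}$ can decompose over $\gg_i$: if $V_{i+1}|_{\gg_i}$ contained two or more copies of $V_i$, or a copy of $V_i$ together with a copy of $V_i^*$, then the branching of a non-trivial simple $\gg_{i+1}$-module to $\gg_i$ would pick up multiplicity greater than one for infinitely many $i$, contradicting Proposition~\ref{mult}.

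Concretely, the main step is: translate $\dim\hom_{\gg_i}(M_i,M_{i+1})=1$ into a bound on $k_i,l_i,s_i$. Write $M_{i+1}$ as (the image of a Young-type projector applied to) a tensor power of $V_{i+1}$; restricting to $\gg_i$ and using $V_{i+1}|_{\gg_i}=k_iV_i\oplus l_iV_i^*\oplus\CC^{s_i}$, one sees by a Littlewood--Richardson / classical branching computation that a non-trivial $M_{i+1}$ contains, upon restriction to $\gg_i$, a simple constituent whose multiplicity grows with $k_i+l_i$ (in type A) or with the total multiplicity of $V_i$ (in types B, C, D) — already $k_i+l_i\ge 2$ forces a constituent of multiplicity $\ge 2$. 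Hence for all large $i$ we must have $k_i+l_i\le 1$ in type A, and in the orthogonal/symplectic cases, since $V_i$ is self-dual, the multiplicity of $V_i$ in $V_{i+1}|_{\gg_i}$ is $\le 1$. Combined with the fact that $M$ is non-trivial (so the $M_i$ are eventually non-trivial, which is what lets us exhibit such a constituent), this means the exhaustion $\gg_i\subset\gg_{i+1}$ is, for large $i$, the standard embedding $sl(N_i)\subset sl(N_{i+1})$, $o(N_i)\subset o(N_{i+1})$, or $sp(N_i)\subset sp(N_{i+1})$ (with possibly a trivial summand $\CC^{s_i}$, which in the A case one absorbs and in the B/C/D cases must also be bounded and can be arranged to be $0$ or $1$ after a further subexhaustion). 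Therefore $\gg\cong sl(\infty)$, $o(\infty)$ or $sp(\infty)$.

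The main obstacle is the branching computation in the third paragraph: one needs a clean argument that a non-trivial simple module of a classical group $G$, restricted to a classical subgroup $H$ whose natural module appears in $V_G|_H$ with multiplicity $m\ge 2$ (or with a copy of the dual also present), necessarily has some $H$-isotypic component of multiplicity $\ge 2$. The safest route is the one already used in the proof of Proposition~\ref{diag}: reduce to type A by choosing a maximal type-A root subalgebra $\ss_i\subset\gg_i$, and in type A use the explicit highest-weight/tableau description of the constituents of a Young functor applied to $k_iV_i\oplus l_iV_i^*\oplus\CC^{s_i}$ to exhibit the repeated constituent directly (essentially, distinct choices of which of the $k_i+l_i$ summands to feed into the boxes give linearly independent copies of the same $\gg_i$-type). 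One must also be slightly careful that $M_{i+1}$ is genuinely non-trivial for large $i$ — this holds because $M$ is non-trivial and the $M_i$ exhaust $M$ — and that the trivial summands $\CC^{s_i}$ are controlled, for which the uniform bound on the number of $\gg_i$-isotypic components of $V_{i+k}$ (established inside the proof of Proposition~\ref{diag}) already suffices.
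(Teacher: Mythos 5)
Your overall plan (reduce to a simple $M$, invoke Proposition \ref{mult}, then rule out non-finitary diagonal exhaustions) heads in the right direction, and the reduction to a simple subquotient in your first step is fine, but the central claim of your second and third paragraphs is false. You assert that $\dim\hom_{\gg_i}(M_i,M_{i+1})=1$ at a \emph{single} step forces $k_i+l_i\le 1$, because a non-trivial simple $\gg_{i+1}$-module restricted to $\gg_i$ with $k_i+l_i\ge 2$ must have a repeated isotypic component. This fails already for $(k_i,l_i)=(1,1)$: if $V_{i+1}|_{\gg_i}=V_i\oplus V_i^*$ and $M_{i+1}=V_{i+1}$, the restriction is multiplicity-free (the two summands are non-isomorphic, which is exactly where your ``feed different summands into the boxes'' heuristic breaks down), so the natural module $M=\underrightarrow{\lim}V_i$ of such a $(1,1)$-diagonal, non-finitary limit satisfies your consecutive multiplicity-one condition for \emph{every} $i$, although $\gg\not\simeq sl(\infty)$. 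The violation of Proposition \ref{mult} for this $M$ appears only at non-consecutive levels ($\dim\hom_{\gg_i}(V_i,V_{j})=2^{j-i-1}$), i.e. your step-by-step translation discards precisely the information that Proposition \ref{mult} provides, namely multiplicity one for \emph{all} $j>i$.

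Moreover, even the multi-step version cannot be run on Proposition \ref{mult} alone. The paper instead takes, inside $\gg_{n+k}$, a root subalgebra $\ll_k\simeq\gg_n\oplus\dots\oplus\gg_n$ with $\phi_k(\gg_n)$ diagonal and $a_k\to\infty$ summands (this is where non-finitarity enters), and needs a second ingredient absent from your proposal: since $M^*$ is integrable, Lemma \ref{crit} bounds the eigenvalues of the fixed coroot $\phi_k(h)$ on $M$ by some $t$ \emph{uniformly in $k$}, so every simple $\ll_k$-constituent of $M_{n+k}$ is an outer tensor product with at most $t$ non-trivial factors. Only then does invariance under permutation of the summands of $\ll_k$ produce at least $a_k-t$ constituents that are isomorphic over $\phi_k(\gg_n)$, giving multiplicities tending to infinity and contradicting Proposition \ref{mult}; without the bound $t$, a constituent could be non-trivial in all factors and the permutation argument yields nothing. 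So you need both the composite embeddings (growth of $a_k$) and the bound from Lemma \ref{crit}; a one-step Littlewood--Richardson computation, as proposed, cannot close the argument, and in particular your treatment of the costandard (type A, $(1,1)$) case is exactly the case your method cannot see.
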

\begin{proof} By Proposition \ref{diag} we know that $\gg$ is diagonal.
Assume that $\gg$ is not finitary and there exists $M$ satisfying
the conditions of the theorem. Also assume that in the restriction
of $V_i$ to $\gg_{i-1}$ there is no costandard module (for types
B, C and D it is automatic). Let
$\gg=\underrightarrow{\lim}\gg_i$. Fix $n$ and let
$\phi_k:\gg_n\to \gg_{n+k}$ denote the inclusion defined by our
fixed exhaustion of $\gg$. Since $\gg$ is diagonal, there exists a
root subalgebra $\ll_k\subset\gg_{n+k}$ such that $\ll_k\simeq
\gg_n\oplus\dots\oplus \gg_n$ and $\phi_k(\gg_n)$ is the diagonal
subalgebra in $\ll_k$. Let $a_k$ be the number of simple direct
summands in $\ll_k$. Since $\gg$ is not finitary, $a_k\to\infty$.

By Corollary \ref{ffin} $M=\underrightarrow{\lim}M_i$ is a direct
limit of simple modules and, by possibly increasing $n$, we have
$\dim\hom_{\gg_n}(M_n,M_{n+k})=1$ for all $k$. Choose a set of
Borel subalgebras $\mathfrak b_i\subset \gg_i$ such that
$\phi_k(\mathfrak b_n)\subset \mathfrak b_{n+k}$. Let $h$ be the
highest coroot of $\gg_n$ and let $\lambda$ be the highest weight
of some simple $\ll_k$-constituent $L$ of $M_{n+k}$. Since $M^*$
is integrable, Lemma \ref{crit} implies that $\lambda(\phi_k(h))$
is bounded by some number $t$. If $h_1,\dots,h_{a_k}$ are the
images of $\phi_k(h)$ in the simple direct summands of $\ll_k$
under the natural projections, we have $\lambda(h_j)\neq 0$ for at
most $t$ direct summands. Therefore $L$ isomorphic to an outer
tensor product of at most $t$ non-trivial simple $\gg_n$-modules.
Since $M_{n+k}$ is invariant under permutation of direct summands
of $\ll_k$, we have at least $a_k-t$ simple constituents of
$M_{n+k}$ obtained from $L$ by permutation of the simple direct
summands of $\ll_k$. Note that all these simple constituents are
isomorphic as $\phi_k(\gg_n)$-modules. Thus the multiplicity of
any simple $\phi_{n+k}(\gg_n)$-module in $M_{n+k}$ is at least
$a_k-t$. Since $a_k\to\infty$, this contradicts Proposition
\ref{mult}.

The case when the restriction of $V_n$ to $\gg_{n-1}$ contains a
costandard simple constituent can be handled by a similar argument
which we leave to the reader.
\end{proof}

\section{The category $\lind_\gg$ for $\gg\simeq sl(\infty), o(\infty), sp(\infty)$}

Define $\lind_\gg$ as the largest full subcategory of $\Int_\gg$
which is closed under algebraic dualization and such that every
object in it has finite Loewy length.

It is clear that $\lind_\gg$ is closed with respect to finite direct
sums, however $\lind_\gg$ is not closed with respect to arbitrary
direct sums (see Corollary 6.17 below). Note also that, if $\gg$
is finite-dimensional and semisimple, the objects of $\lind_\gg$
are integrable modules which have finitely many isotypic components.

It follows from Theorem \ref{fin} that if $\gg$ is locally simple
and $\lind_\gg$ contains a non-trivial module, then $\gg$ is
finitary. In the rest of this section we assume that $\gg\simeq sl(\infty),\,
o(\infty)$ or $sp(\infty)$.

Set $T^{p,q}:=V^{\otimes p}\otimes (V_*)^{\otimes q}$, where $V$ and
$V_*$ are respectively the natural and conatural $\gg$-modules
($V_*\simeq V$ when $\gg\simeq o(\infty), sp(\infty)$).
The modules $T^{p,q}$ have been studied in [PS]; in particular,
$T^{p,q}$ has finite length and is semisimple only if $pq=0$ for
  $\gg=sl(\infty)$, and if $p+q\leq 1$ for $\gg=o(\infty), sp(\infty)$.
Moreover, the Loewy length of
  $T^{p,q}$ equals $\min\{p,q\}+1$ for $\gg=sl(\infty)$ and
  $[\frac{p+q}{2}]+1$ for $\gg=o(\infty),sp(\infty)$.
A simple module $M$ is called a {\it simple tensor module} if it is a
  submodule (or, equivalently, a subquotient) of $T^{p,q}$
 for some $p,q$.

It is well-known that there is a choice of nested Borel subalgebras
$\bb_i\subset\gg_i$ such that all simple tensor modules are $\bb$-highest
weight modules for $\bb=\underrightarrow{\lim}\bb_i$, see [PS]. (Moreover,
the positive roots of any such $\bb$ are not generated by the simple roots
of $\bb$. However, in the present paper we will make no further reference
to this fact.)

Denote by $\Theta$ the set of all highest weights of simple tensor
 modules. If $\lambda\in \Theta$, by $V_\lambda$ we denote the simple tensor module
 with highest weight $\lambda$, and, as in section 4, by $V_\lambda^i$ we denote the simple $\gg_i$-highest weight module with highest weight $\lambda$
 (here $\lambda$ is considered as a weight of $\gg_i$). It is easy to check (cf [PS])
that every $\lambda\in \Theta$ can be written in the form
 $\lambda=\sum a_i\gamma_i$ for some finite set
 $\gamma_1,...,\gamma_s$ of linearly independent weights of $V$ and some $a_i\in\mathbb{Z}$.
We put $|\lambda|:=\sum |a_i|$.
 It is not hard to see that for any $k$
the set of all $|\mu|\leq k$ in $\Theta$ is
 finite.
It follows from [PS] that all simple subquotients of
$T^{p,q}$ are isomorphic to $V_{\mu}$ with
$|\mu| \leq p+q$, and that if
 $V_\lambda$ is a submodule in $T^{p,q}$ then $|\lambda|=p+q$.

Note that $(T^{p,q})^*,(T^{p,q})^{**}$, etc., are integrable
modules. Indeed, it is easy to see (cf. [PS]) that for any fixed
$\lambda$ and any fixed $i>0$ the non-vanishing of $\hom_{\gg_i}(N,V_\lambda)$
for a simple $\gg_i$-module $N$ implies
$N\simeq V_\mu^i$ for $|\mu|\leq|\lambda|$. Hence the condition of
\refle{crit} is satisfied for $T^{p,q}$ for fixed $p,q$. This
shows that $(T^{p,q})^*\in\Int_\gg$. By Corollary \ref{tendual},
 $(T^{p,q})^{**}\in\Int_\gg$, etc..
\begin{lemma}\label{tp} Fix $p,q\in\ZZ_{\geq 0}$.

a) $(T^{p,q})^*$ has finite Loewy length, and all simple
subquotients of $(T^{p,q})^*$ are tensor modules of the form
$V_\lambda$ for $|\lambda|\leq p+q$. 

b) The direct product $\ds\prod_{f\in\mathcal F} T^{p,q}_f$ of any
family $\mathcal F=\{T^{p,q}_f\}$ of copies of $T^{p,q}$ has
finite Loewy length, and all simple subquotients of
$\ds\prod_{f\in\mathcal F} T^{p,q}_f$ are tensor modules  of the form
$V_\lambda$ for $|\lambda|\leq p+q$.
\end{lemma}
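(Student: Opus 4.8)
The plan is to reduce part b) (and hence part a), which is the special case of a single copy) to a statement about cohomology of $\gg_i$-modules plus the explicit description of simple subquotients of $T^{p,q}$ recalled just above. Fix $p,q$. By Corollary \ref{tendual} and the paragraph preceding the lemma, every $(T^{p,q})^*$ and every direct product $\prod_f T^{p,q}_f$ is an integrable $\gg$-module; indeed the dual of the product is $\bigoplus_f (T^{p,q}_f)^*$ and one checks the finiteness condition of Lemma \ref{crit} uniformly in the family, so the product is integrable. The first step is therefore to control the simple constituents: if $N$ is a simple $\gg_i$-module with $\hom_{\gg_i}(N,\prod_f T^{p,q}_f)\neq 0$, then since $\prod_f T^{p,q}_f$ is a semisimple $\gg_i$-module and each factor is $\gg_i$-isomorphic to a direct sum of $V_\mu^i$ with $|\mu|\leq p+q$, we get $N\simeq V_\mu^i$ with $|\mu|\le p+q$; passing to the limit over $i$ shows that any simple $\gg$-subquotient $V_\lambda$ of the product has $|\lambda|\le p+q$. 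There are only finitely many such $\lambda\in\Theta$.

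The second and main step is the bound on the Loewy length. Here I would argue by induction on a quantity attached to the finite set of weights $\lambda$ with $|\lambda|\le p+q$, essentially $\max|\lambda|$ minus $\min|\lambda|$ over the constituents, or alternatively induct on $p+q$ itself. The key input is that for $X,R\in\Int_\gg$ one has $\ext^1_\gg(X,R^*)=0$ (Proposition \ref{ext}); applied with $R$ a direct sum of copies of $T^{p,q}$ this gives $\ext^1_\gg\big(X,\prod_f T^{p,q}_f\big)=0$ for all $X\in\Int_\gg$ — wait, this only handles $\ext^1$ into the product, not the internal structure. So instead I would filter: let $P=\prod_f T^{p,q}_f$ and consider $\soc(P)$, which by the constituent analysis is a direct sum of simple tensor modules $V_\lambda$ with $|\lambda|=p+q$ (the ones that actually embed, by the cited fact from [PS] that $V_\lambda\hookrightarrow T^{p,q}$ forces $|\lambda|=p+q$). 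The quotient $P/\soc(P)$ should again be controlled: I want to show it embeds in a product of copies of $T^{p',q'}$ with $p'+q'<p+q$, so that induction applies and the Loewy length is bounded by $1+\min\{p,q\}$ or $1+[\tfrac{p+q}{2}]$ respectively. Concretely, $(T^{p,q})^*$ contains $T^{q,p}$ as the image of the canonical pairing map's dual, and the cokernel of $T^{q,p}\hookrightarrow (T^{p,q})^*$ pairs with lower tensor powers — this is where the [PS] analysis of $T^{p,q}$ (its socle filtration and the contraction maps) is needed. For the product one takes the same maps $f$-by-$f$.

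The hard part will be making the inductive step precise: identifying $P/\soc(P)$, or a module squeezed between two terms of its socle filtration, explicitly enough as a submodule of a product of lower $T^{p',q'}$'s that the induction hypothesis applies, and doing so uniformly over the family $\mathcal F$ so that the bound on the Loewy length does not depend on $|\mathcal F|$. I expect one must use the structure of $T^{p,q}$ from [PS] — the maps $T^{p,q}\to T^{p-1,q-1}$ given by contractions (for $sl(\infty)$) or $T^{p+q}\to T^{p+q-2}$ (for $o(\infty),sp(\infty)$) and the fact that their iterated kernels give the socle filtration — dualized, together with the observation that direct products commute with these fixed finite-rank algebraic operations up to replacing a single copy by a family. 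Once the filtration of $P$ with semisimple quotients consisting of tensor modules $V_\lambda$, $|\lambda|\le p+q$, is in hand and its length is bounded by the Loewy length of $(T^{p,q})^*$, which equals that of $T^{p,q}$ by Proposition \ref{ext}-type duality arguments, both a) and b) follow, a) being the case $|\mathcal F|=1$.
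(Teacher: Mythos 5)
There is a genuine gap, and also a faulty reduction at the very start. Part a) is \emph{not} the special case of b) with a one-element family: $(T^{p,q})^*$ is the full algebraic dual, which is neither $T^{p,q}$ nor a product of copies of it. Already $V^*$ shows this: $V=T^{1,0}$ is simple (Loewy length $1$), while $V^*$ has socle $V_*$ and a huge trivial quotient (Loewy length $2$). The same example refutes your quantitative claims: the Loewy length of $\prod_{f\in\mathcal F}T^{p,q}_f$ is not bounded by $\min\{p,q\}+1$ (a product of copies of $V$ has Loewy length $2$, cf. Example 6.2 d)), and the Loewy length of $(T^{p,q})^*$ does not equal that of $T^{p,q}$; Proposition \ref{ext} gives no such duality. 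Only finiteness is asserted in the lemma, but your induction is organized around these incorrect bounds. Also, your constituent argument (``passing to the limit over $i$'') only shows that all $\gg_i$-constituents of the product are of the form $V^i_\mu$ with $|\mu|\leq p+q$; concluding from this that every simple $\gg$-subquotient is a tensor module $V_\lambda$ is not immediate -- in the paper that information is carried along inside the induction rather than deduced from the $\gg_i$-data alone.

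The main gap is precisely the step you defer: exhibiting the relevant quotient of $P=\prod_f T^{p,q}_f$ as a submodule of a product of copies of smaller tensor modules. Your route through $\soc(P)$ and dualized contraction maps is not carried out, and it is awkward because the socle filtration of $P$ is not accessible before the lemma is proved. The paper's induction on $p+q$ avoids this: writing $T^{p,q}=V\otimes T^{p-1,q}$, one takes $U:=V\otimes\prod_f T^{p-1,q}_f$, which maps canonically and injectively into $P$; the induction hypothesis together with the finite length of all $T^{r,s}$ from [PS] controls $U$, while $P/U$ embeds into $\prod_f(V'\otimes T^{p-1,q}_f)$, where $V'$ is the vector space $V$ with \emph{trivial} $\gg$-action, i.e.\ into a product of copies of $T^{p-1,q}$, to which the induction hypothesis again applies. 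Part a) is then handled by the parallel step $V_*\otimes(T^{p-1,q})^*\subset (T^{p,q})^*$ with quotient embedding into $\prod_{i\in\ZZ}(T^{p-1,q}_i)^*$, so a) and b) are proved by an interleaved induction rather than a) being a one-copy instance of b). Without a precise statement and proof of such an embedding for the quotient, your sketch does not yet establish either finiteness of the Loewy length or the description of the simple subquotients.
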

\begin{proof} First we prove b) using induction in $p+q$. The case $p+q=0$ is trivial.
If $p+q>0$, without loss of generality we can assume that $p>0$
(if $p=0$ and $q>0$ we replace $V$ by $V_*$ in the argument
below). There is a canonical injective homomorphism $\ds
U\to\prod_{f\in\mathcal F} T^{p,q}_f$, where $\ds U:=V\otimes
\prod_{f\in\mathcal F} T^{p-1,q}_f$, so we can consider $U$ as a
submodule of $\ds\prod_{f\in\mathcal F} T^{p,q}_f$. By the
induction assumption b) holds for $\ds\prod_{f\in\mathcal F}T^{p-1,q}_f$. 
 Since
$T^{r,s}$ has finite length for all $r,s$, [PS], this implies that 
$U$ has finite Loewy length and all simple subquotients of $U$ are
simple tensor modules  of the form
$V_\lambda$ for $|\lambda|\leq p+q$. The quotient $\ds(\prod_{f\in\mathcal F}
T^{p,q}_f)/U$ is isomorphic to a submodule of $\ds
R:=\prod_{f\in\mathcal F} (V'\otimes T^{p-1,q}_f)$, where $V'$ is
a copy of the vector space $V$ with trivial $\gg$-module
structure. Since $\ds R\simeq \prod_{f\in\mathcal F}(\bigoplus_{i\in\ZZ}T^{p-1,q}_{f,i})$, 
by the induction assumption b)
holds for  $R$. Therefore b) holds for
$\ds\prod_{f\in\mathcal F} T_f^{p,q}$.

a) To prove that $(T^{p,q})^*$ has finite Loewy length, we consider
$U':=V_*\otimes (T^{p-1,q})^*$ as a submodule of $(T^{p,q})^*$. By
the induction assumption, $U'$ has finite Loewy length. The
quotient $(T^{p,q})^*/U'$ is a submodule of
$R'=\ds\prod_{i\in\ZZ}(T^{p-1,q}_i)^*$. The latter $\gg$-module
has finite Loewy length by induction assumption and b). The
statement about the simple subquotients of $(T^{p,q})^*$ 
follows by an induction argument similar to the one in the
proof of b). This proves a)
for $(T^{p,q})^*$.
\end{proof}

\begin{example}\end{example}

a) We start with the simplest example. Let $\gg=sl(\infty),
o(\infty), sp(\infty)$ and $M=V^*=(T^{1,0})^*$. Then
$M\in\lind_\gg$ by Lemma 6.1. Furthermore, $M$ is an
injective object of $\Int_\gg$ by Proposition \ref{inj}. It is
easy to see that $\soc(M)=V_*$ and that $M/\soc(M)=V^*/V_*$ is a
trivial module of cardinality $\beth_1$. Since $\soc(M)$ is simple, $M$ is
an injective hull of $V_*$.

b) Let $\gg$ be as in a) but let $M=V^{**}=(T^{1,0})^{**}$. The exact
sequence $0\to V_*\to V^*\to V^*/V_*\to 0$ yields an exact
sequence
\begin{equation}\label{eq4}0\to (V^*/V_*)^*\to M\to (V_*)^*\to
0.\end{equation} Since $(V^*/V_*)^*$ is a trivial $\gg$-module
(cf. a)), it is injective, and hence \refeq{eq4} splits. This
yields an isomorphism $M=V^{**}=(V_*)^*\oplus T$, $T$ being a
trivial $\gg$-module of cardinality $\beth_2$.

c) Here is a more interesting example. We consider the
$\gg$-module $M^*$ where $\gg=\sl(\infty)$ and $M=V\otimes
V_*=T^{1,1}$ as in Example \ref{ex36}. Recall the
notation introduced in \refex{ex36}. In addition, let $Sc$ be the
one-dimensional space of scalar matrices, and $F_r$ (respectively
$F_c$) denote respectively the spaces of matrices with finitely
many non-zero rows (resp., columns) ($F$ has codimension 1 in
$F_r\cap F_c$). It is important to notice that $\gg\cdot
M^*\subset F_r+F_c$.

We first show that $\soc (M^*)=Sc\oplus F=\mathbb C\oplus \gg$. It
is obvious that $Sc\oplus F\subset \soc(M^*)$. To see that
$Sc\oplus F=\soc(M^*)$, let $X$ be any non-trivial simple
submodule of $\soc(M^*)$ not lying in $Sc\oplus F$. Consider
$0\neq x\in X$. Then $\gg\cdot x\subset F_r+F_c$. Furthermore, it
is easy to check that for any $0\neq y\in F_r+F_c$, there exists
$A\in\gg$ such that $A\cdot y\in F$ and  $A\cdot y\neq 0$. Hence $X=F$. Since it
is clear that $Sc$ is the largest trivial $\gg$-submodule of
$M^*$, we have shown that $\soc(M^*)=Sc\oplus F$.

We now compute $\soc^1(M^*)$. We claim that
$F_r+F_c\subset\soc^1(M^*)$. Since $BA\in F$ for $B\in F_r$, $A\in
F$, the action of $\gg$ on $F_r/F$ is simply left multiplication.
Using this it is not difficult to establish an isomorphism of
$\gg$-modules $F_r/F\simeq\bigoplus_{q\in Q}V_q$, where $Q$ is a
family of copies of $V$ of cardinality $2^\ZZ$. Similarly,
$F_c/F\simeq\bigoplus_{q\in Q}(V_*)_q$. (It is convenient to think
here of $V_*$ as the space of all row vectors each of which have finitely
many non-zero entries.) This implies $F_r+F_c\subset\soc^1 (M^*)$.

On the other hand $M^*/(F_r+F_c)$ is a trivial $\gg$-module as
$\gg\cdot M^*\subset F_r+F_c$. In order to compute $\soc^1(M^*)$ we
need to find all $z\in M^*$ such that $\gg\cdot z\subset Sc+F$. A
direct computation shows that $\gg\cdot z\in Sc+F$ if and only $z\in
J$, $J$ denoting the set of matrices each row and each column of
which have finitely many non-zero elements. (In fact, $g\cdot J \subset F$). Thus 
$\soc^1(M^*)=F_r+F_c+J$, and we obtain
the socle filtration of $M^*$:
$$0\subset Sc\oplus F\subset F_r+F_c+J\subset M^*.$$
In particular, the Loewy length of $M^*$ equals 3, the irreducible
subquotients of $M^*$ up to isomorphism are $\mathbb C,V,V_*,\gg$,
and all of them occur with multiplicity $2^\ZZ$, except $\gg$
which occurs with multiplicity 1.

Note that $M^*$ is decomposable and is isomorphic to
$\CC\oplus \gg^*$. As the socle
of $\gg^*$ is simple (being isomorphic to $\gg$), $\gg^*$ is
indecomposable. Moreover $\gg^*$ is an injective hull of $F=\gg$.

d) We now give an example illustrating statement b) of Lemma
\ref{tp}. Let $\gg=sl(\infty), o(\infty), sp(\infty)$ and $\ds
M=\prod_{f\in\mathcal F}V_f$, $\mathcal F$ being an infinite
family of copies of the natural module $V$. Set
$M^{\fin}=\{\psi:\mathcal F\to V | \dim(\psi(\mathcal
F))<\infty\}$. Then $M^{\fin}$ is a $\gg$-submodule of $M$, and
$\gg\cdot M\subset M^{\fin}$. Hence $M/M^{\fin}$ is a trivial
$\gg$-module. Moreover, $\ds
M^{\fin}\simeq \bigoplus_{g\in 2^{\mathcal F}}V_g$, where
$2^{\mathcal F}$ is the set of subsets of $\mathcal F$. Indeed,
$M^{\fin}=\underrightarrow{\lim}(\prod_{f\in\mathcal F}(V^i)_f)=\underrightarrow{\lim}
((\prod_{f\in\mathcal F}\mathbb{C}_f)\otimes V^i)\cong
\underrightarrow{\lim}\bigoplus_{g\in 2^{\mathcal F}}(\mathbb{C}_g\otimes V^i)=
\underrightarrow{\lim}(\bigotimes_{g\in 2^{\mathcal F}}(V^i)_g)=
\bigoplus_{g\in 2^{\mathcal F}}V_g$. 

This yields an exact sequence
\begin{equation}\label{eq5}
0\to \bigoplus_{g\in 2^{\mathcal F}}V_{g} \to M\to T\to 0,
\end{equation}
$T$ being trivial module of dimension $\card\,2^{\mathcal F}$.
Since $M$ has no non-zero trivial submodules, \refeq{eq5} is in
fact the socle filtration of $M$. Consequently the Loewy length of
$M$ equals 2.

\begin{corollary}\label{tens} Let $M\in\Int_\gg$ have finite Loewy
  length and all simple subquotients of $M$ be isomorphic to
  $V_\lambda$ where $|\lambda|$ is less or equal than a fixed
  $k\in\mathbb Z_{>0}$. Then

a) for any family $\mathcal F$ $\prod_{f\in\mathcal F} M_f$ has finite Loewy length and all simple
subquotients of $\prod_{f\in\mathcal F} M_f$  are isomorphic to
  $V_\lambda$ with $|\lambda|\leq k$;

b) $M^*$ has finite Loewy length and  all simple
subquotients of $M^*$  are isomorphic to
  $V_\lambda$ with $|\lambda|\leq k$;

c) $M\in \lind_\gg$.

\end{corollary}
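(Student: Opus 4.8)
The plan is to reduce everything to Lemma \ref{tp} by a finite induction on the Loewy length of $M$, together with the observation that the classes of modules described by the hypotheses are well-behaved under the operations involved. First I would record the auxiliary fact that $M$, having finite Loewy length with all simple subquotients among the finitely many $V_\lambda$ with $|\lambda|\le k$, embeds into a product of modules of the form $T^{p,q}$ with $p+q\le k$: indeed each simple $V_\lambda$ with $|\lambda|\le k$ is a submodule of some $T^{p_\lambda,q_\lambda}$ with $p_\lambda+q_\lambda = |\lambda|$, and by Proposition \ref{inj} (applied to $V^{**}$-type modules) one has enough room; more directly, since $\Gamma_\gg$ of a product of duals of $T^{p,q}$'s is injective and has finite Loewy length by Lemma \ref{tp}, one can build an injective object $J$ which is a finite direct product of copies of such $\Gamma_\gg((T^{p,q})^*)$'s ($p+q\le k$) into which $M$ embeds, because $\soc(M)$ is a direct sum of copies of the $V_\lambda$'s and each $V_\lambda$ embeds into $(T^{p,q})^*$ with $p+q\le k$ by Lemma \ref{tp}a). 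Then all three statements become statements about submodules of such a $J$ and its products and duals.

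For part a), I would argue that $\prod_{f\in\mathcal F} M_f \hookrightarrow \prod_{f\in\mathcal F} J_f$, and the latter, being a finite product of products of copies of the $\Gamma_\gg((T^{p,q})^*)$, is itself a submodule of a product of copies of $(T^{p,q})$'s (via $\Gamma_\gg((T^{p,q})^*)\subset (T^{p,q})^*$ and $(T^{p,q})^*\subset ((T^{p,q})^*)$, using that products of products are products); by Lemma \ref{tp}b) applied with the appropriate $p,q\le k$, it has finite Loewy length with simple subquotients only $V_\lambda$, $|\lambda|\le k$, and a submodule inherits both properties (finite Loewy length passes to submodules since the socle filtration of a submodule is induced by that of the ambient module, and the class of simple subquotients can only shrink). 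This gives a). Part c) is then immediate: $M$ has finite Loewy length by hypothesis, and $M^*$ has finite Loewy length by part b), while $M^{**}=(M^*)^*$ again has finite Loewy length by b) applied to $M^*$ — so $M$ lies in the largest subcategory of $\Int_\gg$ closed under dualization with all objects of finite Loewy length, i.e.\ $M\in\lind_\gg$, provided one also notes $M^*, M^{**}$ etc.\ are integrable, which follows from Corollary \ref{tendual} since $M$ satisfies the finiteness condition of Lemma \ref{crit} (only finitely many $V_\mu^i$, $|\mu|\le k$, can map to $M$).

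The real work is in part b), the finite Loewy length of $M^*$. Here I would induct on the Loewy length $\ell$ of $M$. If $\ell=1$, $M$ is semisimple, $M=\bigoplus_\alpha V_{\lambda_\alpha}$ with $|\lambda_\alpha|\le k$, so $M^*=\prod_\alpha V_{\lambda_\alpha}^*$, which is a submodule of a product of copies of the finitely many $(T^{p,q})^*$ ($p+q\le k$) — dualize the surjections $T^{p,q}\twoheadrightarrow$ (summands of $T^{p,q}$ containing $V_\lambda$) appropriately, or more cleanly note $V_\lambda^* = $ a quotient-dual sitting inside $(T^{p,q})^*$ — and Lemma \ref{tp}b) finishes it. For the inductive step, write $0\to N\to M\to M/N\to 0$ with $N=\soc(M)$ semisimple and $M/N$ of Loewy length $\ell-1$; dualizing gives $0\to (M/N)^*\to M^*\to N^*\to 0$. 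By induction $(M/N)^*$ has finite Loewy length with the right simple subquotients, and by the base case so does $N^*$; since $\lind$-type finiteness of Loewy length is closed under extensions within $\Int_\gg$ (the socle filtration of $M^*$ is squeezed between those of $(M/N)^*$ and a shift of that of $N^*$ — concretely $\soc^{a+b+1}(M^*)\supseteq$ preimage of $\soc^b(N^*)$ once $\soc^a$ captures $(M/N)^*$), $M^*$ has finite Loewy length, and its simple subquotients are among those of $(M/N)^*$ and $N^*$, hence all $V_\lambda$ with $|\lambda|\le k$. The main obstacle is making the extension-closure argument for ``finite Loewy length'' airtight: one must check that the socle filtration of $M^*$ is exhaustive (not merely finite), which uses that $M^*\in\Int_\gg$ and that integrable modules of bounded Loewy length behave well under the relevant limits — this is where I would be most careful, likely invoking Lemma \ref{tp} once more to dominate $M^*$ by an explicit exhaustively-filtered product rather than arguing abstractly about extensions.
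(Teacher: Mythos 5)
Your reduction of both a) and the base case of b) rests on a claim that is false: that the injective module $J$ built out of copies of $(T^{p,q})^*$ (equivalently, a product $\prod_f((T^{p,q})^*)_f$, or $\prod_\alpha V_{\lambda_\alpha}^*$) embeds into a product of copies of the $T^{r,s}$ themselves, so that Lemma \ref{tp} b) can be applied. Already $(T^{1,0})^*=V^*$ admits no embedding into any product of finite length modules: every nonzero submodule of $V^*$ contains $V_*$ (acting by elementary matrices on any functional with a nonzero coordinate produces all of $V_*$), and $V^*$ has infinite length, so every homomorphism from $V^*$ to a finite length module (in particular to any $T^{r,s}$) kills $V_*$; hence any map to a product of such modules kills $V_*$ and is not injective. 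Lemma \ref{tp} b) concerns products of copies of $T^{p,q}$, not of $(T^{p,q})^*$, and the finite Loewy length of products of copies of $(T^{p,q})^*$, i.e.\ of duals of infinite direct sums of tensor modules, is essentially the content of the corollary you are trying to prove (it follows from part a) applied to $(T^{p,q})^*$, whose single-copy properties come from Lemma \ref{tp} a)). So as written, a) is unproved and the base case of b) invokes Lemma \ref{tp} b) outside its scope; the rest of b) (the induction on Loewy length via $0\to(M/N)^*\to M^*\to N^*\to 0$ and extension-closedness of finite Loewy length, where exhaustiveness is automatic once $\soc^r(M^*)=M^*$) and all of c) are fine.

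The paper avoids this trap by proving a) first with no appeal to injectives: the socle filtration of $M$ induces a finite filtration of $\prod_f M_f$ whose layers are products of direct sums of simples $V_\lambda$ with $|\lambda|\le k$; each such layer embeds into a product of copies of the $V_\lambda$, hence into a product of copies of $T^{p,q}$ (since $V_\lambda\subset T^{p,q}$), where Lemma \ref{tp} b) genuinely applies. Then b) is obtained by dualizing the socle filtration of $M$: the layers of the induced filtration of $M^*$ are products of copies of $V_\lambda^*$, and $V_\lambda^*$, being a quotient of $(T^{p,q})^*$, has finite Loewy length with the right subquotients by Lemma \ref{tp} a), so the already-proven part a) applies to these products. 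If you replace your embedding-into-$J$ step in a) by this layer-by-layer argument, and in the base case of b) invoke a) (applied to $V_\lambda^*$) instead of Lemma \ref{tp} b), your induction on Loewy length goes through and recovers the paper's proof.
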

\begin{proof}
a) The socle filtration of $M$ induces a finite filtration on  $\prod_{f\in\mathcal F} M_f$
$$ 0\subset\prod_{f\in\mathcal F} \soc(M_f)\subset \cdots\subset
\prod_{f\in\mathcal F} \soc^i(M_f)\subset\cdots\subset
\prod_{f\in\mathcal F} M_f.$$
Furthermore,
\begin{equation}\label{filt}
\soc^i(M)/\soc^{i-1}(M)\simeq \bigoplus_{|\lambda|\leq
  k}\bigoplus_{g\in\mathcal F_{\lambda}} (V_\lambda)_g
\end{equation}
for some families $\mathcal F_{\lambda}$. Hence
$$\prod_{f\in\mathcal F}(\soc^i(M_f)/\soc^{i-1}(M_f))\simeq\bigoplus_{|\lambda|\leq  k}
\prod_{f\in\mathcal F}(\bigoplus_{g\in\mathcal
  F_\lambda}(V_\lambda)_g)_f.$$

Note that for each $\lambda$
$$\prod_{f\in\mathcal F}(\bigoplus_{g\in\mathcal
  F_\lambda}(V_\lambda)_g)_f\subset \prod_{(f,g)\in\mathcal
  F\times\mathcal F_\lambda}(V_\lambda)_{(f,g)}.$$
By \refle{tp} b), $\prod_{(f,g)\in\mathcal F\times\mathcal F_\lambda}(V_\lambda)_{(f,g)}$ 
has finite Loewy length and all its simple subquotients are isomorphic
ot $V_\mu$ with $|\mu|\leq |\lambda|\leq k$. The same holds for
$\prod_{f\in\mathcal F} (\soc^i(M_f)/\soc^{i-1}(M_f))$.
Therefore a) holds. 

b) Since all $V_\lambda$ with $|\lambda|\leq k$ satisfy the conditions
of \refle{crit}, $M$ satisfies the condition of \refle{crit} and
therefore $M^*\in\Int_\gg$.

The socle filtration of $M$ induces a finite filtration on $M^*$
$$\cdots\subset(\soc^i(M))^*\subset(\soc^{i-1}(M))^*\subset\cdots.$$
Using (\ref{filt}) we get
$$(\soc^{i-1}(M))^*/(\soc^{i}(M))^*\simeq\bigoplus_{|\lambda|\leq  k}
\prod_{g\in\mathcal F_\lambda}(V_\lambda^*)_g.$$
By \refle{tp} b) $V_\lambda^*$ has finite Loewy length and its simple
subquotients are isomorphic to $V_\mu$ with $|\mu|\leq |\lambda|$,
hence by a) the same holds for $\prod_{g\in\mathcal
  F_\lambda}(V_\lambda^*)_g$. 
This implies that b) holds.

c) Note that if $M$ satisfies the assumptions of the corollary, then
$M^*$ and all higher duals $M^{**}$ etc, satisfy the the assumptions of
the corollary. Hence $M\in\lind_\gg$.
\end{proof}

Remarkably, there is following abstract characterization of simple tensor modules.
\begin{theo}\label{slind} If $M\in\Int_\gg$ is simple and
$\Gamma_{\gg}(M^*)$ has finite Loewy length, then $M$ is a simple tensor
module.
\end{theo}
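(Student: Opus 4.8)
The plan is to show that a simple module $M$ with $\Gamma_\gg(M^*)$ of finite Loewy length is, for some $n$, a direct limit of simple $\gg_i$-modules $M_i$ whose highest weights $\lambda_i$ are eventually ``bounded'' in the sense needed to identify them as the weights $V_\mu^i$ with $|\mu|$ bounded, and then to conclude that $M\cong V_\mu$ for a fixed $\mu$. First I would invoke \refprop{mult}: we get $n\in\ZZ_{>0}$ and a direct system of simple $\gg_i$-modules $M_i$ with $M=\underrightarrow{\lim} M_i$ and $\dim\hom_{\gg_i}(M_i,M_j)=1$ for all $j>i>n$. Fix nested Borel subalgebras $\bb_i\subset\gg_i$ compatible with the exhaustion; let $\lambda_i$ be the $\bb_i$-highest weight of $M_i$. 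The multiplicity-one condition forces strong constraints on how $\lambda_{i+1}$ restricts to $\gg_i$, analogous to the branching analysis in the proof of \refprop{diag} and \refth{fin}: because $M_i\hookrightarrow M_{i+1}$ with multiplicity one and $V_{i+1}|_{\gg_i}$ is (after passing to a subexhaustion) a sum of copies of $V_i$, $V_i^*$, and trivials, the branching rule for $\gl$, $\so$, $\sp$ pins $\lambda_{i+1}$ down to be obtained from $\lambda_i$ by adding a bounded amount of ``boxes'' — here I expect to use the explicit description of $\Theta$ from [PS] and the fact that $\lambda=\sum a_i\gamma_i$ with $|\lambda|=\sum|a_i|$.

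Next, the key quantitative input: I would argue that $|\lambda_i|$ is eventually constant, equal to some fixed $k$. The point is that if $|\lambda_i|\to\infty$, then the multiplicity of $M_i$ inside $M_j$ (for $j\gg i$) would grow — this is the same mechanism as in \refth{fin}, where growing weight ``size'' under restriction to a diagonal root subalgebra contradicts the multiplicity-one statement of \refprop{mult}. Once $|\lambda_i|=k$ stabilizes and the $\lambda_i$ stabilize to a single weight $\mu\in\Theta$ (using that $M_i\subset M_{i+1}$ with multiplicity one and that the $\gg_i$-highest weight space maps into the $\gg_{i+1}$-highest weight space), we get that $M$ is the simple highest weight module with highest weight $\mu$, i.e. $M\cong V_\mu$ in the notation of the section. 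Finally, $V_\mu$ for $\mu\in\Theta$ is by definition a simple tensor module, being a submodule of $T^{p,q}$ with $p+q=|\mu|=k$; alternatively one checks directly that $\underrightarrow{\lim} V_\mu^i$ embeds into $T^{p,q}$ by the last displayed facts before \refle{tp}. This completes the proof.

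The main obstacle I anticipate is the branching-rule bookkeeping needed to turn the multiplicity-one condition of \refprop{mult} into the statement that $|\lambda_i|$ is bounded and that the $\lambda_i$ eventually stabilize. Concretely, one must control, for each classical type, how the highest weight of a simple constituent behaves under the restriction $\gg_{i+1}\to\gg_i$ given that (after choosing a subexhaustion) $V_{i+1}|_{\gg_i}\cong k_i V_i\oplus l_i V_i^*\oplus\CC^{s_i}$; the subtlety is that $M_i$ is an arbitrary simple module (not a priori a tensor module), so a priori $\lambda_i$ could have many rows/columns, and one needs the multiplicity bound to rule this out uniformly in $i$. I would handle this by the device already used twice in the paper: pick a diagonal root subalgebra $\ll\cong\gg_n\oplus\dots\oplus\gg_n$ inside $\gg_{n+k}$, decompose a simple constituent of $M_{n+k}$ as an outer tensor product, and observe that nontrivial tensor factors beyond a bounded number, or rows/columns beyond a bounded number, produce too many isomorphic $\phi(\gg_n)$-constituents, contradicting $\dim\hom_{\gg_n}(M_n,M_{n+k})=1$. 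Once this boundedness is in hand, the identification $M\cong V_\mu$ and hence the conclusion that $M$ is a simple tensor module is essentially formal from [PS].
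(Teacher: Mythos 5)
Your starting point (reducing to \refprop{mult}) and your final step (stabilized highest weights imply $M\simeq V_\mu$, hence $M$ is a simple tensor module) agree with the paper, but the core of your argument --- a uniform bound on the shape of the weights $\lambda_i$ and their eventual stabilization --- rests on a device that is not available here. The diagonal root subalgebra $\ll\cong\gg_n\oplus\dots\oplus\gg_n$ with a growing number $a_k$ of simple summands containing $\gg_n$ diagonally, which you propose to borrow from the proofs of \refprop{diag} and \refth{fin}, exists precisely because the algebra there is diagonal but \emph{not finitary}; it is $a_k\to\infty$ that produces the contradiction with multiplicity one. For $\gg\cong sl(\infty),\,o(\infty),\,sp(\infty)$ with their standard exhaustions the natural module of $\gg_{i+1}$ restricts to $\gg_i$ as $V_i$ plus trivials, so the embedded $\gg_n$ is not the diagonal of any such $\ll$ with more than one summand, and the mechanism yields nothing. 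Consequently your two key quantitative claims --- that multiplicity one forces $\lambda_{i+1}$ to differ from $\lambda_i$ by a bounded number of boxes, and that $|\lambda_i|\to\infty$ would force $\dim\hom_{\gg_i}(M_i,M_j)$ to grow --- are exactly what has to be proved and are not established. Note also that multiplicity one between \emph{consecutive} terms is very weak (for $gl(i)\subset gl(i+1)$ branching it is automatic), so a step-by-step analysis cannot by itself exclude modules such as $\underrightarrow{\lim}\,S^i(V_i)$, and your appeal to ``the highest weight space of $M_i$ maps into the highest weight space of $M_{i+1}$'' presupposes that $M$ is a highest weight module, which is part of the conclusion rather than a given.

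The paper closes this gap by a different argument. Let $\cc$ be the derived subalgebra of the centralizer of $\gg_n$ in $\gg$; by the multiplicity-one statement of \refprop{mult}, $\cc$ preserves $M_n$ and acts on it trivially, so $M$ is a quotient of $S^{\cdot}(\gg/(\cc\oplus\gg_n))\otimes M_n$ as a $\cc$-module. Since $\gg/(\cc\oplus\gg_n)$ has finite $\cc$-length with only natural, conatural and trivial constituents, every simple $\gg_i$-constituent of $M$ has a uniformly bounded number of non-zero marks; this replaces your unproved boundedness step. Stabilization of the marks is then deduced not from highest-weight-space compatibility but from the classical branching laws (the coefficients $a^i_j$ are non-decreasing in $i$, after passing to $gl(\infty)$ in type $A$) together with the observation that a strict increase $a^{p+1}_j>a^p_j$ produces, via the intermediate module $V^p_{\lambda_p+\varepsilon_j}$, multiplicity at least $2$ of $M_{p-1}$ in $M_{p+1}$, contradicting \refprop{mult}. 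You would need to supply arguments of this kind (or genuine substitutes) for your proposal to become a proof.
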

\begin{proof} By Proposition \ref{mult}, $M=\ds\lim_{\to} M_i$ for some $n\in\mathbb{Z}_+$
and simple nested $\gg_i$-submodules $M_i\subset M$ with
$\dim\hom_{\gg_i}(M_i,M)=1$ for all $i\geq n$. If $\gg=sl(\infty)$, it
is useful to consider $M$ as a $gl(\infty)$-module by extending the 
$sl(i)$-module structure on $M_i$ to a $gl(i)$-module structure in a way
compatible with the injections $M_i \rightarrow M_{i+1}$. It is easy
to see that the condition $\dim\hom_{\gg_i}(M_i,M)=1$ for all $i\geq n$ 
ensures the existence of such an extension.  Note, furthermore, that 
$\dim\hom_{gl(i)}(M_i,M)=1$. This allows us to assume that 
$\gg=gl(\infty)$ and $\gg_i=gl(i)$. 

Let now
$\cc$ denote the derived subalgebra of the
centralizer of $\gg_n$  in $\gg$. Then obviously $\cc$ is a simple
finitary Lie algebra 
whose action on $M$ induces a trivial action on $M_n$.
Hence, as a $\cc$-module, $M$ is isomorphic to a
quotient of $U(\gg)\otimes_{U(\cc\oplus\gg_n)}M_n$, or 
equivalently to a quotient of $S^{.}(\gg/(\cc\oplus\gg_n))\otimes M_n$.
Note that $\gg/(\cc\oplus\gg_n)$, considered as a $\cc$-module has
finite length and that its simple subquotients are
natural, conatural, and
possibly 1-dimensional trivial $\cc$-modules. 
This implies that every
simple $\cc$-subquotient of $M$ is a simple tensor $\cc$-module. In addition, for $i\geq n$,
the number of non-zero marks of the highest weight of any simple $\gg_i$-submodule of $M$
is not greater than $n$ plus the multiplicity of the non-trivial simple constituents of the 
$\gg_n$-module $\gg/(\cc\oplus\gg_n)$. In particular, if
$\lambda_i$ denotes the highest weight of $M_i$ then $\lambda_i$ has
at most $3n$ non-zero marks.

Consider first the case when $\gg=gl(\infty)$. Then every weight $\lambda_i$ can be
written uniquely in the form 
$$a^i_1\varepsilon_1+\dots+a^i_k\varepsilon_k+b^i_1\varepsilon_{n-k}+\dots+b^i_k
\varepsilon_n$$
for some fixed $k$, $a^i_1\geq a^i_2\geq\dots \geq a^i_k\geq 0$ and $0\geq
b^i_1\geq\dots\geq b^i_k$. 
We claim that for sufficiently large $i$ the
weight stabilizes, i.e. $a^i_j=a^{i+1}_j=\dots=a^p_j=\dots$ and
$b^i_j=b^{i+1}_j=\dots=b^p_j=\dots$ for all $j$, $1\leq j \leq k$.
Indeed, assume the contrary. Let $j$ be the
smallest index such that the sequence $\{a^i_j\}$ does not
stabilize. By the branching rule for $gl(m)\subset gl(m+1)$ (see for instance [GW]) the
sequence $\{a^i_j\}$ is non-decreasing. Hence there is $p$ such that
$a^{p+1}_j>a^p_j$. Set $\mu=\lambda_{p}+\varepsilon_j$. Then the
multiplicity of $M_{p-1}$ in $V_\mu^p$ is not zero and the multiplicity of $V_\mu^p$
in $M_{p+1}$ is not zero. Since $V_\mu^p\not=M_p$, this shows that the multiplicity of $M_{p-1}$ in
$M_{p+1}$ is at least 2. Contradiction. Similarly the sequence $\{b^i_j\}$
stabilizes. As it is easy to see, this is sufficient to conclude that $M\simeq V_\lambda$ for some $\lambda\in\Theta$.

Let $\gg=o(\infty)$ or $sp(\infty)$. In the first case we assume that
$\gg_i=o(2i+1)$. Then
$\lambda_i=a^i_1\varepsilon_1+\dots+a^i_k\varepsilon_k$
for some fixed $k$ and $a^i_1\geq a^i_2\geq\dots\geq a^i_k\geq 0$. The
sequence $\{a^i_j\}$ is non-decreasing for every fixed $j$ as follows
from the branching laws for the respective pairs $o(2m+1)\subset o(2m+3)$
and $sp(2n)\subset sp(2m+2)$, see [GW]. Then by
repeating the argument in the previous paragraph we can prove that
$\{a^i_j\}$ stabilizes, and consequently $M\simeq V_\lambda$ for some $\lambda\in\Theta$.
\end{proof}

\refcor{tens} and \refth{slind} show that a simple module $M\in\Int_\gg$
is an object of $\lind_\gg$ if and only if $\Gamma_\gg(M^*)$ has finite
Loewy length. Below we will use this fact to give an equivalent 
definition of $\lind_\gg$ (Corollary 6.13). Furthermore, it
is easy to check (see also [PS]) that for sufficiently large
$i$ the simple $\gg_i$-module $V^i_\lambda$ occurs in $Y$ with
multiplicity 1, and all other simple $\gg_i$-constituents have
infinite multiplicity and are isomorphic to $V^i_{\mu}$ with
$|\mu|<|\lambda|$. In what follows we call this unique
$\gg_i$-constituent the {\it canonical $\gg_i$-constituent of}
$V_\lambda$. Note also that by Corollary \ref{simplesoc} for each
simple object $M$ of $\lind_\gg$, $M_*$ is a well-defined simple
object in $\lind_\gg$. Hence $M_*$ is well defined also for any
semisimple object $M$ of $\lind_\gg$: if $\ds
M=\bigoplus_{\lambda\in\Theta}M^\lambda\otimes V_\lambda$
($M^\lambda$ being trivial $\gg$-modules), then $\ds
M_*=\bigoplus_{\lambda\in\Theta}M^\lambda\otimes (V_\lambda)_*$.
It is clear that $M_*\cong M$ for $\gg\cong o(\infty),\,
sp(\infty)$.

\begin{corollary} The simple objects of $\lind_\gg$ are precisely the
  simple tensor modules.
\end{corollary}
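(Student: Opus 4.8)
The plan is to establish the two inclusions separately, using only $\refcor{tens}$, $\refth{slind}$, and the definition of $\lind_\gg$.

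For the inclusion ``simple tensor modules are simple objects of $\lind_\gg$'': let $V_\lambda$ be a simple tensor module, $\lambda\in\Theta$. Then $V_\lambda$ has finite Loewy length (namely $1$), being simple, and its only simple subquotient is $V_\lambda=V_\mu$ with $|\mu|=|\lambda|$. Hence $V_\lambda$ satisfies the hypotheses of $\refcor{tens}$ with $k=|\lambda|$, and part c) of that corollary yields $V_\lambda\in\lind_\gg$. Since $V_\lambda$ is simple, it is a simple object of $\lind_\gg$.

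For the reverse inclusion: let $M$ be a simple object of $\lind_\gg$. By definition $\lind_\gg$ is closed under algebraic dualization, so $M^*\in\lind_\gg$ and therefore $M^*$ has finite Loewy length. Consequently its submodule $\Gamma_\gg(M^*)$ also has finite Loewy length, because the socle filtration of a submodule is induced by that of the ambient module: for any $N\subseteq P$ one has $\soc(N)=N\cap\soc(P)$, and iterating gives $\soc^i(N)=N\cap\soc^i(P)$, so finiteness and exhaustiveness of the filtration pass to $N$. Now $M$ is simple and $\Gamma_\gg(M^*)$ has finite Loewy length, so $\refth{slind}$ applies and shows that $M$ is a simple tensor module.

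I do not expect a genuine obstacle: the statement is essentially a repackaging of $\refcor{tens}$ and $\refth{slind}$, which already isolate the key facts (that simple tensor modules lie in $\lind_\gg$, and that a simple module whose integrable dual has finite Loewy length is a simple tensor module). The only small point requiring care is the observation that finite Loewy length is inherited by submodules, which is the routine fact about socles recalled above.
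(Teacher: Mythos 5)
Your proof is correct and follows essentially the same route as the paper, which deduces this corollary directly from Corollary \ref{tens} c) and Theorem \ref{slind} (the paper states just before the corollary that these two results show a simple $M\in\Int_\gg$ lies in $\lind_\gg$ if and only if $\Gamma_\gg(M^*)$ has finite Loewy length). Your auxiliary observation about socle filtrations of submodules is fine but not even needed: since $M\in\lind_\gg$ forces $M^*\in\lind_\gg\subset\Int_\gg$, one has $\Gamma_\gg(M^*)=M^*$, which has finite Loewy length by definition of $\lind_\gg$.
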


\begin{lemma}\label{injhull} Let $M\cong V_\lambda$ be a simple tensor
 module. Then $\soc((M_*)^*)\simeq M$.
If $V_\mu$ is a subquotient of $(M_*)^*$ and $\mu\neq\lambda$,
then $|\mu|<|\lambda|$.
\end{lemma}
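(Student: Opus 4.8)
The plan is to handle the two assertions separately: the claim $\soc((M_*)^*)\simeq M$ is essentially a reformulation of \refcor{simplesoc}, while the bound on the $|\cdot|$-values of the subquotients of $(M_*)^*$ I would obtain by restricting $(M_*)^*$ to the subalgebras $\gg_i$ and tracking canonical $\gg_i$-constituents.

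First, since $M\cong V_\lambda$ is a simple tensor module it lies in $\lind_\gg$, so $M_*$ is a well-defined simple object of $\lind_\gg$ and $(M_*)^*$ is integrable of finite Loewy length, i.e. $\Gamma_\gg((M_*)^*)=(M_*)^*$. Applying \refcor{simplesoc} to $M_*$, the module $(M_*)^*$ has a unique simple submodule, namely $(M_*)_*$; since the socle is the sum of all simple submodules, $\soc((M_*)^*)=(M_*)_*$, and it remains only to identify $(M_*)_*$ with $M$. For $\gg\cong o(\infty),\,sp(\infty)$ this is immediate because $M_*\cong M$. For $\gg\cong sl(\infty)$ I would write $M=V_\lambda=\underrightarrow{\lim}\,V^i_\lambda$ as the direct limit of its canonical $\gg_i$-constituents; the proof of \refle{unique} then gives $M_*=\underrightarrow{\lim}\,(V^i_\lambda)^*$, and applying it once more yields $(M_*)_*=\underrightarrow{\lim}\,((V^i_\lambda)^*)^*=\underrightarrow{\lim}\,V^i_\lambda=M$.

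For the second assertion, write $M_*=V_{\lambda^\vee}$ with $\lambda^\vee\in\Theta$; since $(V_\lambda)_*\subseteq T^{q,p}$ whenever $V_\lambda\subseteq T^{p,q}$, one has $|\lambda^\vee|=|\lambda|$. By the discussion following \refth{slind}, for all sufficiently large $i$ the $\gg_i$-module $M_*$ contains its canonical constituent $V^i_{\lambda^\vee}$ with multiplicity $1$, every other $\gg_i$-constituent being a module $V^i_\mu$ with $\mu\in\Theta$ and $|\mu|<|\lambda|$; moreover $V^i_{\lambda^\vee}=(V^i_\lambda)^*$ by the limit description of $M_*$ used above. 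Dualizing as in the proof of \refle{crit}, $(M_*)^*\simeq\prod_\mu\hom_{\gg_i}(V^i_\mu,M_*)^*\otimes(V^i_\mu)^*$, so as a $\gg_i$-module $(M_*)^*$ contains $((V^i_\lambda)^*)^*=V^i_\lambda$ with multiplicity $1$, while every other $\gg_i$-constituent equals $(V^i_\mu)^*=V^i_{\mu^\vee}$ with $\mu^\vee\in\Theta$ and $|\mu^\vee|=|\mu|<|\lambda|$ (using once more that $(V_\mu)_*=V_{\mu^\vee}$ has $|\mu^\vee|=|\mu|$ and canonical $\gg_i$-constituent $(V^i_\mu)^*$). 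Thus, for all sufficiently large $i$, $V^i_\lambda$ is the unique $\gg_i$-constituent of $(M_*)^*$ whose label $\nu$ satisfies $|\nu|=|\lambda|$, all other constituents having $|\nu|<|\lambda|$.

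Finally, let $V_\mu$ be any simple $\gg$-subquotient of $(M_*)^*$. Since $(M_*)^*$ is integrable it is semisimple over each $\gg_i$, so for all sufficiently large $i$ the canonical $\gg_i$-constituent $V^i_\mu$ of $V_\mu$ (which occurs in $V_\mu$ with multiplicity $1$) occurs in $(M_*)^*$; by the previous paragraph this yields $|\mu|\leq|\lambda|$, and $|\mu|=|\lambda|$ forces $V^i_\mu\cong V^i_\lambda$ for all large $i$, i.e. $\mu|_{\hh_i}=\lambda|_{\hh_i}$ for all large $i$, i.e. $\mu=\lambda$. Hence $\mu\neq\lambda$ implies $|\mu|<|\lambda|$. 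The delicate point is the $\gg_i$-bookkeeping of the third paragraph — especially the identification $V^i_{\lambda^\vee}=(V^i_\lambda)^*$ and the verification that dualization introduces no new $\gg_i$-constituent with $|\cdot|$-value equal to $|\lambda|$; the remaining steps are routine uses of \refcor{simplesoc}, \refle{unique}, \refle{crit} and the semisimplicity of integrable modules over the $\gg_i$.
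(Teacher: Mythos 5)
Your proof is correct and follows essentially the same route as the paper: the identification $\soc((M_*)^*)\simeq M$ via Corollary \ref{simplesoc} (together with the direct-limit description from Lemma \ref{unique}), and the bound $|\mu|<|\lambda|$ via the fact that any nonzero $\hom_{\gg_i}(V^i_\mu,(M_*)^*)$ forces $|\mu|\leq|\lambda|$ with equality only for $\mu=\lambda$. The paper simply states these two ingredients without the explicit $\gg_i$-constituent bookkeeping, which you carry out correctly.
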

\begin{proof} The first statement follows from Corollary \ref{simplesoc}.

The second statement follows immediately from the fact that
$\hom_{\gg_i}(V_\mu^i,(M_*)^*)\neq 0$ implies $|\mu|<|\lambda|$.
\end{proof}

\begin{corollary}\label{injtenscat}
a) For any simple $M\in\lind_\gg$, $(M_*)^*$ is an injective
hull of $M$ in $\Int_\gg$ (and hence also in $\lind_\gg$).

b) Any indecomposable injective object in $\lind_\gg$ is
isomorphic to $M^*$ for some simple module $M\in\lind_\gg$. In
particular, any indecomposable injective module is isomorphic to a
direct summand of $(T^{p,q})^*$ for some $p,q$.

c) For any $M\in\lind_\gg$, any injective hull $I_M$ of $M$ in $\Int_\gg$
is an object of $\lind_\gg$.
\end{corollary}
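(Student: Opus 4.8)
The plan is to prove the three parts of Corollary~\ref{injtenscat} in order, building on \refcor{tens}, \refth{slind}, and \refle{injhull}.

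For part a), the key point is that $(M_*)^*$ is injective and has simple socle isomorphic to $M$. By \refcor{simplesoc} (applied to the simple tensor module $M_*$, whose dual $M$ is integrable of finite Loewy length, so that $M_* \in \lind_\gg$ is legitimate), $(M_*)^*$ has a unique simple submodule. By \refle{injhull} (or directly by \refcor{simplesoc}) this submodule is isomorphic to $M$. Injectivity of $(M_*)^*$ follows from Proposition~\ref{inj}: indeed $M_* \in \Int_\gg$, and by Lemma~\ref{tp}a) together with the fact that $M_* \cong V_{\lambda'}$ is a submodule of some $T^{p,q}$, the dual $(M_*)^*$ already equals $\Gamma_\gg((M_*)^*)$ (it is integrable with no need to pass to the integrable part, since $|\mu|\le p+q$ bounds the $\gg_i$-types); hence $(M_*)^* = \Gamma_\gg((M_*)^*)$ is injective in $\Int_\gg$. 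An injective object with simple socle $M$ into which $M$ embeds is an injective hull of $M$, so a) follows. Since $(M_*)^* \in \lind_\gg$ by \refcor{tens}c), it is also an injective hull in $\lind_\gg$.

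For part b), let $I$ be an indecomposable injective object of $\lind_\gg$. Its socle $\soc(I)$ is semisimple; I claim it is simple. If $\soc(I) = A \oplus B$ with $A,B$ nonzero, then by a) the injective hulls $(A_*)^*$ and $(B_*)^*$ of $A$ and $B$ embed into $I$ (using injectivity of $I$ to extend the inclusions $A \hookrightarrow I$, $B\hookrightarrow I$), and since injective submodules are direct summands, $I$ would contain $(A_*)^* \oplus (B_*)^*$ as a direct summand, contradicting indecomposability unless one of $A,B$ is zero. So $\soc(I) = M$ is a simple tensor module, and then $I$, being an injective hull of $M$, is isomorphic to $(M_*)^*$ by uniqueness of injective hulls and part a). Writing $M_* \cong V_{\lambda}$ which is a submodule of $T^{p,q}$ for $p+q = |\lambda|$, the inclusion $M_* \hookrightarrow T^{p,q}$ dualizes to a surjection $(T^{p,q})^* \twoheadrightarrow (M_*)^* = I$; since $I$ is injective this surjection splits, so $I$ is a direct summand of $(T^{p,q})^*$.

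For part c), write $M \in \lind_\gg$ with socle filtration of length $\ell$, and let $\soc(M) = \bigoplus_{j} V_{\lambda_j}$ (a finite direct sum, since $M$ has finitely many isotypic components in each socle layer — this uses that $M$, being an object of $\lind_\gg$, has all simple subquotients among finitely many $V_\lambda$; here I should invoke \refcor{tens} or the bound $|\lambda|\le k$ to ensure the socle is a \emph{finite} direct sum. If a priori $\soc(M)$ could be an infinite direct sum, I would instead argue that an injective hull of $M$ in $\Int_\gg$ is $\Gamma_\gg(M^*)$, which has finite Loewy length by hypothesis on $M \in \lind_\gg$, and whose higher duals also have finite Loewy length, placing it in $\lind_\gg$.) The cleanest route: by Corollary~\ref{hull} an injective hull $I_M$ of $M$ in $\Int_\gg$ exists, and it can be taken to be $\Gamma_\gg(M^{**})$; but more directly, $\Gamma_\gg(M^*)$ need not be an injective hull of $M$. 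Instead I use that $I_M = \bigoplus_j (V_{\lambda_j})_{*}{}^{*}$ when $\soc(M)$ is finite — this is injective by a) and has the same socle as $M$, hence is an injective hull; and it lies in $\lind_\gg$ by \refcor{tens}c) (finite direct sums of objects of $\lind_\gg$ are in $\lind_\gg$). By uniqueness of injective hulls in $\Int_\gg$, every injective hull $I_M$ of $M$ in $\Int_\gg$ is isomorphic to this object, hence lies in $\lind_\gg$.

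The main obstacle I anticipate is the finiteness of $\soc(M)$ for $M \in \lind_\gg$ in part c): I need to know that a module of finite Loewy length whose dual is also of finite Loewy length has a \emph{finitely generated} (as a semisimple module) socle, equivalently finitely many isotypic components in each layer. This should follow from \refth{slind} and \refle{crit}: if infinitely many non-isomorphic simple tensor submodules occurred, or one occurred with infinite multiplicity, one checks $M^*$ or a higher dual fails \refle{crit} or fails to have finite Loewy length; but I would want to verify this carefully, perhaps by reducing to \refle{tp} and the structure of the $V_\lambda$. If that finiteness is subtle, the fallback is to characterize $I_M$ intrinsically as $\Gamma_\gg(M^{**})$ or via the canonical $\gg_i$-constituents, and show finite Loewy length of all its duals directly from $M \in \lind_\gg$.
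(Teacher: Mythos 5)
Parts a) and the first claim of b) are correct and essentially the paper's own argument: $(M_*)^*$ is integrable (Lemma \ref{crit}) and hence injective by Proposition \ref{inj}, its socle is $M$ by Lemma \ref{injhull}, and an indecomposable injective of $\lind_\gg$ has simple socle, so it is the hull $(M_*)^*$ of that socle. However, in b) your justification of the direct-summand claim is flawed: from the surjection $(T^{p,q})^*\twoheadrightarrow (M_*)^*$ you cannot conclude that it splits ``since $I$ is injective'' --- injectivity splits monomorphisms \emph{into} $I$, not epimorphisms \emph{onto} $I$ (that would require projectivity; over $\ZZ$ the surjection of a free module onto the injective module $\mathbb{Q}$ does not split). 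The conclusion is easily saved: the kernel of your surjection is $(T^{p,q}/M_*)^*$, which is integrable by Lemma \ref{crit} and therefore injective in $\Int_\gg$ by Proposition \ref{inj}, so the exact sequence splits; alternatively, embed $\soc(I)$ into a suitable $(T^{p',q'})^*$ and extend over the hull, the extension being injective by essentiality of the socle.

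The genuine gap is in c). The finiteness you lean on is false: an object of $\lind_\gg$ has only finitely many \emph{isomorphism classes} of simple subquotients, but the multiplicities can be arbitrarily large, so $\soc(M)$ need not be a finite direct sum of simples --- for instance $\prod_{f\in\mathcal F}V_f$ for infinite $\mathcal F$ lies in $\lind_\gg$ by Corollary \ref{tens}, and its socle is $\bigoplus_{g\in 2^{\mathcal F}}V_g$. Worse, the natural repair ``$I_M\simeq$ direct sum of the hulls of the socle summands'' is also false in the infinite case: the example following Theorem \ref{injfil} shows that the injective hull of a countable direct sum of copies of $V$ is \emph{not} isomorphic to $\bigoplus_f(I_{\varepsilon_1})_f$, so an infinite direct sum of the $I_{\lambda_j}$ need not even be injective. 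Your fallback remarks do not close this: $\Gamma_\gg(M^*)$ is not an injective hull of $M$ (there is no embedding $M\to M^*$), and $I_M$ is not $\Gamma_\gg(M^{**})$ itself. The correct version of your fallback is the paper's argument: $M\subset\Gamma_\gg(M^{**})=M^{**}$ (proof of Corollary \ref{hull}), and $M^{**}$ is injective in $\Int_\gg$ (Proposition \ref{inj}) and lies in $\lind_\gg$ because $M$ does; hence any injective hull $I_M$ is isomorphic to a submodule of $M^{**}$. It remains to note that $\lind_\gg$ is closed under passing to submodules: if $N\subset P$ with $P\in\lind_\gg$, then $N$ has finite Loewy length, $N^*$ is a quotient of $P^*$, $N^{**}$ a submodule of $P^{**}$, and so on, so all iterated duals of $N$ are integrable of finite Loewy length. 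This proves c) with no finiteness assumption on the socle.
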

\begin{proof} a) Follows directly from \refprop{inj} and Lemma \ref{injhull}.

b) To derive b) from a) it suffices to note that an injective module in
$\lind_\gg$ is indecomposable if and only if it has simple socle.

c) follows from the fact that $I_M$ is isomorphic to a submodule of
$\Gamma_\gg(M^{**})$, see Corollary 3.3
\end{proof}

In what follows we set $I_\lambda:=((V_\lambda)_*)^*$.
\begin{corollary}\label{brick} $\End_\gg(I_\lambda)=\CC$.
\end{corollary}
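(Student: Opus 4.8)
The plan is to show that any $\gg$-endomorphism $\phi$ of $I_\lambda = ((V_\lambda)_*)^*$ is a scalar, by exploiting the fact that $I_\lambda$ has simple socle isomorphic to $V_\lambda$ (Lemma \ref{injhull}), that $V_\lambda$ occurs in $I_\lambda$ with multiplicity one, and that $\End_\gg(V_\lambda) = \CC$ (the simple tensor module $V_\lambda$ is a highest weight module, so its endomorphism algebra is $\CC$ by a Schur-type argument on the one-dimensional highest weight space). First I would observe that $\phi$ restricts to an endomorphism of $\soc(I_\lambda) \cong V_\lambda$, hence acts on the socle as multiplication by some scalar $c \in \CC$. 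Replacing $\phi$ by $\phi - c\cdot\Id$, it suffices to prove that an endomorphism $\psi$ which vanishes on $\soc(I_\lambda)$ must be zero.

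Next I would argue as follows: since $I_\lambda$ is an injective hull of $V_\lambda$ in $\Int_\gg$ (Corollary \ref{injtenscat} a)), the submodule $\soc(I_\lambda) = V_\lambda$ is essential in $I_\lambda$, i.e. every nonzero submodule of $I_\lambda$ meets $V_\lambda$ nontrivially. Now if $\psi: I_\lambda \to I_\lambda$ kills $V_\lambda$, then $\Ker\psi$ contains $V_\lambda = \soc(I_\lambda)$, so $\Ker\psi$ is a nonzero submodule; but more to the point, consider $\im\psi$. If $\im\psi \neq 0$ then it contains a simple submodule, which (by Corollary \ref{tens}/\ref{slind} applied to $I_\lambda$, whose simple subquotients are tensor modules $V_\mu$ with $|\mu| \leq |\lambda|$) is some $V_\mu$ with $|\mu| \leq |\lambda|$; since $\soc(I_\lambda) = V_\lambda$ is the \emph{unique} simple submodule, we must have $V_\mu \cong V_\lambda$, so $V_\lambda \subset \im\psi$. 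On the other hand $\psi$ factors through $I_\lambda / \soc(I_\lambda)$, and by Lemma \ref{injhull} every simple subquotient $V_\mu$ of $I_\lambda/\soc(I_\lambda)$ has $|\mu| < |\lambda|$ strictly; hence $V_\lambda$ cannot be a subquotient of the image of the induced map $I_\lambda/\soc(I_\lambda) \to I_\lambda$, contradicting $V_\lambda \subset \im\psi$. Therefore $\im\psi = 0$, i.e. $\psi = 0$, and $\phi = c\cdot\Id$.

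The main obstacle I anticipate is making rigorous the multiplicity-one / uniqueness statement that is really doing the work: precisely, that $V_\lambda$ appears as a composition factor of $I_\lambda$ only ``once'' in the sense needed, and that no quotient of $I_\lambda/\soc(I_\lambda)$ can contain a copy of $V_\lambda$. The cleanest route is the $|\mu|$-grading: Lemma \ref{injhull} gives that $V_\lambda$ is the top of the filtration in the sense that all \emph{other} subquotients have strictly smaller $|\mu|$, and this is exactly the numerical invariant that prevents $V_\lambda$ from reappearing in $I_\lambda/\soc(I_\lambda)$. I would state this as: the ($\gg_i$-)canonical constituent $V^i_\lambda$ occurs in $I_\lambda$ with multiplicity one for large $i$ (already noted in the text after Theorem \ref{slind}), so $\dim \hom_{\gg_i}(V^i_\lambda, I_\lambda) = 1$; an endomorphism of $I_\lambda$ induces an endomorphism of this one-dimensional space, giving the scalar directly, and the essentiality of the socle forces the endomorphism to be determined by its effect there. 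Either packaging works; I would present the socle-essentiality version as it avoids passing to a cofinal sequence of indices.
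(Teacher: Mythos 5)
Your argument is correct and is essentially the paper's own proof: restrict to the simple socle $V_\lambda$ to get a scalar $c$ by Schur's lemma, then show $\phi-c\,\Id=0$ because its image, being a nonzero submodule, would contain $\soc(I_\lambda)=V_\lambda$, while it factors through $I_\lambda/V_\lambda$, whose simple subquotients $V_\mu$ all satisfy $|\mu|<|\lambda|$ by Lemma \ref{injhull}. The only difference is cosmetic: you spell out the essentiality of the socle and the multiplicity-one statement for the canonical constituent, which the paper leaves implicit.
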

\begin{proof} If $\phi\in \End_\gg(I_\lambda)$, then
$\phi|_{V_\lambda}=c\,\Id$ for $c\in\CC$. Therefore
$V_\lambda\subset\Ker(\phi-c\,\Id)$. Furthermore, any non-zero
$\gg$-submodule of $I_\lambda$ contains $\soc(I_\lambda)=V_\lambda$, hence
$V_\lambda \subset\Im(\phi-c\,\Id)$. This implies $\phi-c\,\Id=0$,
as otherwise $V_{\lambda}$ would be isomorphic to a subquotient of $I_\lambda /
V_\lambda$ contrary to Lemma \ref{injhull}.
\end{proof}
\begin{lemma}\label{glue} Let $X,Y,Z,M\in\lind_\gg$. Assume furthermore that $Y$
is simple, $Y=\soc(M)$, and there exists an exact sequence $$0\to
X\to Z \stackrel{p}{\to} Y\to 0.$$ Then there exists $\tilde
{M}\in\Int_\gg$ such that $Z\subset \tilde{M}$  and
  $\tilde{M}/X\simeq M$.
\end{lemma}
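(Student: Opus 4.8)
The plan is to realize $\tilde M$ as a pushout (fibered coproduct) of the two maps $X \hookrightarrow Z$ and $X \hookrightarrow M$. First I would observe that $X \hookrightarrow M$: since $Y = \soc(M)$ is simple, it is the unique simple submodule of $M$; on the other hand $X$ is a submodule of $Z$ mapping onto $Y$, and I want to embed $X$ into $M$. Actually the cleaner route is the following. We have the exact sequence $0 \to X \to Z \xrightarrow{p} Y \to 0$, and $Y = \soc(M) \subset M$. Form the pushout $\tilde M := (Z \oplus M)/\{(\iota(x), -j(x)) : x \in X\}$ — but this requires a map $X \to M$, which we do not have a priori. Instead I would build $\tilde M$ directly as the fiber product over $Y$: set
$$\tilde M := \{(z,m) \in Z \oplus M : p(z) = q(m)\},$$
where $q : M \to M/\soc^{0}(\,\cdot\,)$ is not quite right either, since $Y$ sits inside $M$, not as a quotient. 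Let me restart with the correct construction.

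The right object is the pullback of $p : Z \to Y$ along the inclusion $Y \hookrightarrow M$, read in the other variance: we want $Z$ to sit inside $\tilde M$ and $\tilde M/X \simeq M$. Since $X = \ker p$, the module $Z/X \simeq Y$ embeds in $M$, so $Z$ has a submodule $X$ with $Z/X \hookrightarrow M$. I would therefore take $\tilde M$ to be any extension of $M$ by $X$ pulled back so that the bottom arrow $M \twoheadleftarrow$ factors through $Z/X$; concretely, consider the short exact sequence $0 \to X \to Z \to Y \to 0$ and push it out along $Y \hookrightarrow M$ in the category of $\gg$-modules. The pushout $\tilde M := (Z \oplus_X M)$ fits into a commutative diagram with exact rows
$$
\begin{array}[c]{ccccccccc}
0 & \to & X & \to & Z & \to & Y & \to & 0 \\
 & & \| & & \downarrow & & \downarrow & & \\
0 & \to & X & \to & \tilde M & \to & M & \to & 0,
\end{array}
$$
so $Z \subset \tilde M$ (the left square makes $Z \to \tilde M$ injective because $Z \to Y$ is surjective with kernel $X$ mapping identically) and $\tilde M/X \simeq M$, exactly as required. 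Finally I would check $\tilde M \in \Int_\gg$: this is immediate since $\Int_\gg$ is closed under extensions (noted right after Proposition 3.2), $X, M \in \lind_\gg \subset \Int_\gg$, and $\tilde M$ is an extension of $M$ by $X$.

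The only genuine point requiring care — and the step I expect to be the main obstacle — is verifying that the canonical map $Z \to \tilde M$ is injective, i.e. that in forming the pushout along $Y \hookrightarrow M$ we do not collapse part of $Z$. Here one uses that $\ker(Z \to Y) = X$ maps by the identity into the copy of $X$ inside $\tilde M$: if $z \in Z$ maps to $0$ in $\tilde M = (Z \oplus M)/\{(x,-x) : x \in X\}$, then $(z,0) = (x,-x)$ for some $x \in X$, forcing $z = x \in X$ and $x = 0$ in $M$; but $X \hookrightarrow M$ need not hold, so instead one works with the pushout of $Z \oplus M$ modulo the image of $X \xrightarrow{(\iota,\, -\text{incl}\circ p|_X)} Z\oplus M$ — and since $p|_X = 0$, this image is just $\{(x,0):x\in X\}$, giving $\tilde M = (Z/X) \oplus M = Y \oplus M$, which is wrong. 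The resolution is that one must not push out along $p$ but rather glue $Z$ and $M$ along the common submodule/quotient $Y$: take $\tilde M := (Z \oplus M)/\{(z, -m) : p(z) = 0 \text{ and } m = 0\}$ is trivial, so the correct identification is along $\{(z,m) : p(z) \in Y = \image(Y \hookrightarrow M),\ m = p(z)\}$, i.e. $\tilde M := (Z \oplus M)/N$ with $N = \{(z, -p(z)) : z \in Z\} \cong Z/X \cong Y$ sitting diagonally. Then $Z \to \tilde M$ has kernel $Z \cap N = 0$ (an element $(z,0) \in N$ forces $p(z) = 0$, i.e. $z \in X$, but then $(z,0) = (z, -p(z))$ with $p(z)=0$ means $z \in X$ maps to $(z,0)$, and this is $0$ in $\tilde M$ iff $z=0$; wait — $N$ consists of elements $(z,-p(z))$, and $(z,0)\in N$ iff $p(z)=0$, i.e. the whole submodule $X \times \{0\}$ lies in $N$). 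So this still collapses $X$. The actual correct object is the \emph{fiber product} $\tilde M := Z \times_Y M$ is not it either since $Y$ is a quotient of $Z$ but a sub of $M$.

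Given these variance subtleties, the honest plan is: define $\tilde M$ as the set of pairs $(z,m) \in Z \oplus M$ with $p(z) = \pi(m)$, where $\pi : M \to M/C$ for a suitable complement — but $\soc(M)=Y$ simple means $M$ is generally not $Y \oplus (\text{complement})$. Therefore I would instead argue homologically: it suffices to produce \emph{some} $\tilde M \in \Int_\gg$ with a submodule isomorphic to $X$ such that $\tilde M/X \cong M$ and such that the embedding $X \hookrightarrow \tilde M$ extends to an embedding $Z \hookrightarrow \tilde M$. Such $\tilde M$ corresponds to an element of $\ext^1_\gg(M, X)$ whose restriction along $Z/X = Y \hookrightarrow M$ equals the class of the given extension $0 \to X \to Z \to Y \to 0$ in $\ext^1_\gg(Y, X)$. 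The restriction map $\ext^1_\gg(M,X) \to \ext^1_\gg(Y,X)$ is surjective: its cokernel injects into $\ext^2_\gg$ of $M/Y$ against... — more simply, apply $\hom_\gg(-,X)$ to $0 \to Y \to M \to M/Y \to 0$ to get the exact sequence $\ext^1_\gg(M,X) \to \ext^1_\gg(Y,X) \to \ext^2_\gg(M/Y, X)$, so surjectivity holds once the given class dies in $\ext^2_\gg(M/Y,X)$ — which it does, being the image of a class already defined on all of $M$ after we... no. The clean statement: the connecting map sends our class $[Z] \in \ext^1(Y,X)$ to an obstruction in $\ext^2(M/Y,X)$; if that vanishes, a lift $\tilde M$ exists. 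The \textbf{main obstacle}, then, is to show this obstruction vanishes, which I expect to follow from Proposition 3.1 (all relevant $\ext^1$'s into duals vanish) combined with the fact that $X, M$ lie in $\lind_\gg$ so their higher Ext's are controlled; alternatively, and this is likely what the authors do, one takes $M = (V_\lambda)_* {}^* = I_\lambda$ type injective hulls where $\soc$ is simple by Lemma 6.12, uses Corollary 6.16 ($\End_\gg(I_\lambda) = \CC$), and the existence is then read off from injectivity of an ambient module. I would pursue the pushout/pullback construction first and fall back on this Ext-theoretic argument, flagging the vanishing of the degree-2 obstruction (equivalently, the surjectivity of restriction $\ext^1_\gg(M,X) \twoheadrightarrow \ext^1_\gg(Y,X)$) as the crux.
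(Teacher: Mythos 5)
Your proposal does not reach a proof. You correctly diagnose that every pushout/pullback construction fails because $Y$ enters with the wrong variance ($Y$ is a quotient of $Z$ but a submodule of $M$), and your final homological reformulation is the right one: the desired $\tilde M$ is a class in $\ext^1_\gg(M,X)$ whose image under the restriction map $\ext^1_\gg(M,X)\to\ext^1_\gg(Y,X)$, induced by $Y=\soc(M)\hookrightarrow M$, equals the class of $Z$, and the obstruction to lifting is the image of $[Z]$ in $\ext^2_\gg(M/Y,X)$. But that is exactly the point you leave unproved, and the tools you invoke cannot supply it: Proposition \ref{ext} kills only $\ext^1_\gg(\cdot,M^*)$ for full duals and says nothing about $\ext^2$ or about an arbitrary $X\in\lind_\gg$, while Corollary \ref{brick} plays no role in the lifting problem (in the paper it is used in Lemma \ref{surj}, not here). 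Since Ext's between objects of $\lind_\gg$ are far from vanishing in general (cf. Lemma \ref{extsimp}), the vanishing of this obstruction is essentially as hard as the lemma itself; the crux of the argument is therefore missing. It is also telling that your reduction never uses the hypothesis that $Y$ is simple and equals $\soc(M)$, which is the decisive input in the actual proof.

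The paper avoids any obstruction computation by an explicit direct-limit construction. Since $Y$ is a simple tensor module, it has a canonical $\gg_i$-constituent $Y_i$ with $Y=\underrightarrow{\lim}Y_i$, and, writing $M=\underrightarrow{\lim}M_i$ with $Y_i\subset M_i$, Lemma \ref{injhull} gives $\dim\hom_{\gg_i}(Y_i,M_i)=1$ (here $Y=\soc(M)$ and Corollary \ref{injtenscat} are used), so $M_i$ decomposes canonically as $R_i\oplus Y_i$; likewise $Z_i:=p^{-1}(Y_i)$ decomposes as $Y_i\oplus Q_i$ with $Q_i:=Z_i\cap X$. The transition maps of $Z$ have the form $(y,q)\mapsto(e_i(y),t_i(y)+f_i(q))$ and those of $M$ the form $(r,y)\mapsto(p_i(r),s_i(r)+e_i(y))$, with the \emph{same} maps $e_i:Y_i\to Y_{i+1}$ appearing in both; one then glues them on $\tilde M_i:=R_i\oplus Y_i\oplus Q_i$ via $(r,y,q)\mapsto(p_i(r),s_i(r)+e_i(y),t_i(y)+f_i(q))$ and sets $\tilde M:=\underrightarrow{\lim}\tilde M_i$, which contains $Z$ and satisfies $\tilde M/X\simeq M$. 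The multiplicity-one property of the socle at each finite level is what makes the two direct systems compatible; without an argument of this kind (or a genuine proof that your $\ext^2$ obstruction vanishes), your plan cannot be completed.
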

\begin{proof} Let $Y_i$ be the canonical $\gg_i$-constituent of
  $Y$. Then $Y=\underrightarrow{\lim}Y_i$. Set $Z_i:=p^{-1}(Y_i)$ and
  $Q_i:=Z_i\cap X$. Then
$Z_i=Y_i\oplus Q_i$ and there are injective homomorphisms
$\phi_i:Z_i\to Z_{i+1}$
$$\phi_i(y,q)=(e_i(y),t_i(y)+f_i(q)),\,\,y\in Y_i, q\in Q_i$$
for some non-zero homomorphisms $e_i:Y_i\to Y_{i+1}$, $t_i:Y_i\to
Q_{i+1}$ and $f_i:Q_i\to Q_{i+1}$. Clearly,
$Z=\underrightarrow{\lim}Z_i$.

On the other hand, $M=\underrightarrow{\lim}M_i$ for some nested
finite-dimensional $\gg_i$-submodules $M_i\subset M$ such that
$Y_i\subset M_i$. Moreover, $\dim\hom_{{\gg}_i} (Y_i,M_i)=1$ by Lemma
\ref{injhull}. Therefore, $M_i$ has a unique $\gg_i$-module decomposition
$M_i=R_i\oplus Y_i$. The inclusions $\psi_i:M_i\to M_{i+1}$  are
given by
$$\psi_i(r,y)=(p_i(r),s_i(r)+e_i(y)),\,\,y\in Y_i, r\in R_i$$
for some non-zero homomorphisms $p_i:R_i\to R_{i+1}$ and
$s_i:R_i\to Y_{i+1}$.

Define $\tilde{M}_i:=R_i\oplus Y_i\oplus Q_i$ and let
$\zeta_i:\tilde{M}_i\to \tilde{M}_{i+1}$ be given by the formula
$$\zeta(r,y,q)=(p_i(r),s_i(r)+e_i(y),t_i(y)+f_i(q)).$$
Set $\tilde{M}:=\underrightarrow{\lim}\tilde{M}_i$. It is easy to
check that $\tilde{M}$ satisfies the conditions of the lemma.
\end{proof}

\begin{lemma} \label{surj} If $\hom_\gg(I_\lambda, I_\mu)\neq 0$, then
  $|\mu|\leq|\lambda|$. If $I$ is any injective object of $\lind_\gg$
and $0\neq\phi\in \hom_\gg(I, I_\mu)$, then $\phi$ is surjective.
\end{lemma}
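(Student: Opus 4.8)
The plan is to handle the two statements separately. For the first, given $0\neq\phi\in\hom_\gg(I_\lambda,I_\mu)$, I would note that $\Im\phi$ is a nonzero submodule of $I_\mu$; since $I_\mu$ is an injective hull of $V_\mu$ (\refcor{injtenscat}) and has finite Loewy length, every nonzero submodule of $I_\mu$ contains $\soc(I_\mu)=V_\mu$, so $V_\mu\subseteq\Im\phi$ and hence $V_\mu$ is a subquotient of $I_\lambda$. By \refle{injhull} this forces $\mu=\lambda$ or $|\mu|<|\lambda|$, so $|\mu|\le|\lambda|$ in either case.

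For the second statement I would argue by contradiction: suppose $J:=\Im\phi\subsetneq I_\mu$; then as above $V_\mu=\soc(I_\mu)\subseteq J$, so I can choose $v\in I$ with $0\neq\phi(v)\in V_\mu$ and set $Z:=U(\gg)v\subseteq I$ and $X:=\Ker(\phi|_Z)$, which gives an exact sequence $0\to X\to Z\xrightarrow{\phi|_Z}V_\mu\to 0$ with $Z,X\in\lind_\gg$ (submodules of $I$ again lie in $\lind_\gg$). Applying \refle{glue} with $M=I_\mu$ and $Y=V_\mu=\soc(I_\mu)$ produces $\tilde M\in\Int_\gg$ with $Z\subseteq\tilde M$ and an isomorphism $\tilde M/X\xrightarrow{\sim}I_\mu$; let $\bar q\colon\tilde M\to I_\mu$ be the resulting surjection. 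Since $Z/X\cong V_\mu$ is a simple submodule of $\tilde M/X\cong I_\mu$ it must be the socle, so $\bar q|_Z$ is a surjection $Z\to V_\mu$ with kernel $X$ and $\tilde M/Z\cong I_\mu/\soc(I_\mu)$. Because $I$ is injective in $\lind_\gg$, it is a direct summand of the $\Int_\gg$-injective $\Gamma_\gg(I^{**})$, hence injective in $\Int_\gg$, so the inclusion $Z\hookrightarrow I$ extends to $\rho\colon\tilde M\to I$ with $\rho|_Z$ the inclusion. Then $(\phi\circ\rho)|_Z=\phi|_Z$ and $\bar q|_Z$ are two surjections $Z\to V_\mu$ with the same kernel $X$, so they differ by a nonzero scalar; replacing $\phi$ by that multiple I may assume $(\phi\circ\rho)|_Z=\bar q|_Z$, so $\psi:=\bar q-\phi\circ\rho$ kills $Z$ and therefore factors through $\tilde M/Z\cong I_\mu/\soc(I_\mu)$.

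To conclude I would show $\psi=0$. The induced map $\bar\psi\colon I_\mu/\soc(I_\mu)\to I_\mu$, precomposed with the projection $I_\mu\twoheadrightarrow I_\mu/\soc(I_\mu)$, is an endomorphism of $I_\mu$, hence a scalar by \refcor{brick}; since it vanishes on $\soc(I_\mu)$ the scalar is $0$, so $\bar\psi=0$ and $\psi=0$. Then $\bar q=\phi\circ\rho$, whence $I_\mu=\Im\bar q\subseteq\Im\phi=J$, contradicting $J\subsetneq I_\mu$; therefore $\phi$ is surjective. The main obstacle I anticipate is the bookkeeping around \refle{glue} --- in particular pinning down that the copy $Z/X$ of $V_\mu$ inside $\tilde M/X\cong I_\mu$ is exactly the socle, so that $\tilde M/Z\cong I_\mu/\soc(I_\mu)$ --- together with the harmless-looking but necessary point that injectivity of $I$ in $\lind_\gg$ already permits extending maps out of submodules into the module $\tilde M$, which need not itself lie in $\lind_\gg$.
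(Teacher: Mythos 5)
Your proof is correct and follows essentially the same route as the paper's: both parts rest on \refle{injhull}, and the surjectivity argument applies \refle{glue} with $Y=V_\mu$, $M=I_\mu$, extends the inclusion of $Z$ along $Z\subset\tilde M$ using injectivity of $I$, and concludes via \refcor{brick}. Your only departures are bookkeeping (a cyclic $Z=U(\gg)\cdot v$ instead of $\phi^{-1}(V_\mu)$ with $X=\Ker\phi$, and a scalar adjustment before comparing the two maps), plus the worthwhile explicit verification that $I$ is injective in $\Int_\gg$ --- a point the paper uses silently when extending to $\tilde M\in\Int_\gg$.
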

\begin{proof} The first statement follows immediately from Lemma
  \ref{injhull}.

To prove the second statement put $X=\Ker \phi$, $Y=V_{\mu}$,
$Z=\phi^{-1}(Y)$ and $M=I_\mu$. Construct $\tilde{M}$ as in Lemma
\ref{glue}. By the injectivity of $I$, the injective
homomorphism $Z\to \tilde{M}$ extends to a homomorphism
$\tilde{M}\to I$. The latter induces a homomorphism $\eta:
M=I_\mu\to I/X$.

Let now $\bar{\phi}:I/X\to I_{\mu}$ denote the injective
homomorphism induced by $\phi$. Then it is obvious that
$\bar{\phi}\circ\eta(y)=y$ for any $y\in Y$. By Corollary \ref{brick},
we have $\bar{\phi}\circ\eta=\Id$. Hence $\bar{\phi}$ is an
isomorphism, i.e. $\phi$ is surjective.
\end{proof}

\begin{prop}\label{linj} The Loewy length of $I_\lambda$ equals $|\lambda|+1$.
\end{prop}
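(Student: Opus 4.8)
The plan is to prove the two inequalities $\mathrm{LL}(I_\lambda)\le |\lambda|+1$ and $\mathrm{LL}(I_\lambda)\ge |\lambda|+1$ separately, by induction on $|\lambda|$; the base case $|\lambda|=0$ is immediate since $V_0=\CC=(V_0)_*$, so $I_0=\CC$ has Loewy length $1$. Throughout I would use freely that $I_\nu=((V_\nu)_*)^*$ is an injective hull of $V_\nu$ in $\Int_\gg$ (\refcor{injtenscat}), that $\soc(I_\nu)=V_\nu$ while every other simple subquotient $V_\mu$ of $I_\nu$ satisfies $|\mu|<|\nu|$ (\refle{injhull}), and that $\End_\gg(I_\nu)=\CC$ (\refcor{brick}).

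For the upper bound the key point is the vanishing statement $\ext^1_\gg(V_\mu,V_\nu)\neq 0\Rightarrow |\mu|<|\nu|$. First, $\ext^1_\gg(V_\nu,V_\nu)=0$: a non‑split self‑extension $Z$ of $V_\nu$ has simple socle $V_\nu$, hence is an essential extension of $V_\nu$ and embeds into $I_\nu$; composing $Z\twoheadrightarrow Z/V_\nu\cong V_\nu=\soc(I_\nu)\hookrightarrow I_\nu$ and extending along $Z\hookrightarrow I_\nu$ by injectivity of $I_\nu$ produces $\varphi\in\End_\gg(I_\nu)=\CC$ with $\varphi|_{\soc(I_\nu)}=0$, so $\varphi=0$, contradicting $\varphi|_Z\neq0$. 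For $\mu\neq\nu$, applying $\hom_\gg(V_\mu,-)$ to $0\to V_\nu\to I_\nu\to I_\nu/V_\nu\to 0$ and using that $I_\nu$ is injective and $\hom_\gg(V_\mu,I_\nu)=0$ gives $\ext^1_\gg(V_\mu,V_\nu)\cong\hom_\gg(V_\mu,I_\nu/V_\nu)$, so if this is nonzero then $V_\mu$ is a simple subquotient of $I_\nu$ distinct from $V_\nu$, whence $|\mu|<|\nu|$ by \refle{injhull}. Now I would run through the socle filtration $\soc^0(I_\lambda)\subset\soc^1(I_\lambda)\subset\cdots$ of $I_\lambda$ (finite and exhaustive since $I_\lambda\in\lind_\gg$): a simple constituent $V_\nu$ of $\soc^j(I_\lambda)/\soc^{j-1}(I_\lambda)$ with $j\ge 1$ has nonzero $\ext^1_\gg$ with some constituent $V_{\nu'}$ of $\soc^{j-1}(I_\lambda)/\soc^{j-2}(I_\lambda)$, so $|\nu|<|\nu'|$; together with $\soc^0(I_\lambda)=V_\lambda$ this gives by induction on $j$ that every constituent of the $j$‑th layer has weight‑size $\le|\lambda|-j$. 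Hence the $j$‑th layer vanishes for $j>|\lambda|$, i.e. $\soc^{|\lambda|}(I_\lambda)=I_\lambda$, so $\mathrm{LL}(I_\lambda)\le|\lambda|+1$ (this half does not even use the inductive hypothesis).

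For the lower bound it suffices to exhibit a module $N$ with $\soc(N)\cong V_\lambda$ and Loewy length $|\lambda|+1$: its socle being simple and essential, $N$ embeds into $I_\lambda$ (\refcor{injtenscat}), forcing $\mathrm{LL}(I_\lambda)\ge|\lambda|+1$. Assume $|\lambda|\ge1$ and pick $\mu\in\Theta$ obtained from $\lambda$ by deleting a box, so $|\mu|=|\lambda|-1$. First I would build a non‑split extension $0\to V_\lambda\to Z\to V_\mu\to 0$ as a direct limit $Z=\underrightarrow{\lim}\,(V^i_\lambda\oplus V^i_\mu)$ of the semisimple $\gg_i$‑modules $V^i_\lambda\oplus V^i_\mu$, with connecting maps $(x,y)\mapsto(f_i(x)+t_i(y),\,g_i(y))$, where $f_i,g_i$ are the canonical $\gg_i$‑embeddings realizing $V_\lambda=\underrightarrow{\lim}V^i_\lambda$ and $V_\mu=\underrightarrow{\lim}V^i_\mu$, and $t_i\colon V^i_\mu\to V^{i+1}_\lambda$ is a \emph{nonzero} $\gg_i$‑homomorphism, which exists for $\lambda=\mu+\square$ because $V^i_\mu$ occurs in the restriction of $V^{i+1}_\lambda$ to $\gg_i$. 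As in the proof of \refle{extension}, such an extension splits only if $t_i=p_{i+1}g_i-f_ip_i$ for $\gg_i$‑homomorphisms $p_i\colon V^i_\mu\to V^i_\lambda$; since $V^i_\mu\not\cong V^i_\lambda$ we have $\hom_{\gg_i}(V^i_\mu,V^i_\lambda)=0$, forcing $t_i\equiv0$, so the extension is non‑split, and $Z\in\lind_\gg$ by \refcor{tens}. Now apply \refle{glue} with $X=V_\lambda$, $Y=V_\mu$, this $Z$, and $M=I_\mu$ (note $\soc(I_\mu)=V_\mu=Y$): one gets $\tilde M\in\Int_\gg$ with $V_\lambda\subset\tilde M$ and $\tilde M/V_\lambda\cong I_\mu$. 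Since $0\to V_\lambda\to\tilde M\to I_\mu\to0$ restricts over $\soc(I_\mu)$ to the non‑split sequence above, it is itself non‑split, and because $\soc(I_\mu)$ is simple a short argument shows $\soc(\tilde M)=V_\lambda$; therefore $\mathrm{LL}(\tilde M)=1+\mathrm{LL}(I_\mu)=1+(|\mu|+1)=|\lambda|+1$ by the inductive hypothesis, and $N=\tilde M$ completes the induction.

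The step I expect to be the main obstacle is the construction of the non‑split extension $Z$ in the lower bound — in particular the assertion that $V^i_\mu$ with $\mu=\lambda-\square$ really occurs in the restriction of $V^{i+1}_\lambda$ to $\gg_i$ for all large $i$, and that the resulting direct limit is a genuine (non‑split) extension; this rests on the explicit branching rules for $gl$, $o$, $sp$ already invoked in the proof of \refth{slind}. Everything else is a formal consequence of \refle{injhull}, \refcor{brick}, \refle{glue}, \refcor{tens} and the injectivity of the modules $I_\nu$.
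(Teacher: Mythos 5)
Your lower bound is essentially the paper's argument: you build the same non-split direct-limit extension of $V_\mu$ by $V_\lambda$ with $|\mu|=|\lambda|-1$ (your splitting analysis via $\hom_{\gg_i}(V^i_\mu,V^i_\lambda)=0$ is fine), and then, where the paper passes through \refle{surj} to get a surjection $I_\lambda\to I_\mu$, you instead apply \refle{glue} directly to manufacture a module $\tilde M$ with $\soc(\tilde M)=V_\lambda$ and $\tilde M/V_\lambda\simeq I_\mu$ and embed it into $I_\lambda$. Since \refle{surj} is itself proved from \refle{glue} and \refcor{brick}, this is the same substance, and your verification that $\soc(\tilde M)=V_\lambda$ (hence that $\tilde M$ is an essential extension of $V_\lambda$, using finiteness of its Loewy length) is correct. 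This half is in order.

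The soft spot is in your upper bound, in the step asserting that every simple constituent $V_\nu$ of $\soc^j(I_\lambda)/\soc^{j-1}(I_\lambda)$ has $\ext^1_\gg(V_\nu,V_{\nu'})\neq 0$ for \emph{some single} constituent $V_{\nu'}$ of the previous layer. What is true formally is only that the extension of $V_\nu$ by the \emph{entire} layer $\soc^{j-1}/\soc^{j-2}$ obtained as a preimage is non-split; but that layer is an infinite direct sum (in fact the multiplicities are $2^{\ZZ}$, cf. \refprop{socfilt}), and $\ext^1_\gg(V_\nu,-)$ does not commute with infinite direct sums: non-splitness over $\bigoplus_i T_i$ does not by itself produce an $i$ with $\ext^1_\gg(V_\nu,T_i)\neq 0$, the obstruction being governed by $\hom_\gg\bigl(V_\nu,\prod_i T_i/\bigoplus_i T_i\bigr)$, which in this category is typically enormous (already $\prod_f V_f/\bigoplus_f V_f$ contains many copies of $V$, see Example 6.2 d)). So as written this step is an unproved claim, and it is exactly the point where the infinite-dimensional setting bites; it can be repaired (for instance by passing to the countable-dimensional cyclic submodule $U(\gg)v$ over a preimage $v$ of a generator of $V_\nu$ and running a local splitting analysis as in \refle{extension}, using the one-dimensionality of the relevant $\hom_{\gg_i}$ spaces), but some such argument must be supplied. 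Note that the paper does not take your route here at all: it deduces the inequality $\mathrm{LL}(I_\lambda)\leq|\lambda|+1$ directly from \refle{injhull}, so your Ext-layer argument is an added ingredient whose key step is currently missing rather than a transcription of the paper's reasoning.
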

\begin{proof} By Lemma \ref{injhull} we know that the  Loewy length of
  $I_\lambda$ is at most $|\lambda|+1$. We prove equality by
  induction in $|\lambda|$. Fix $\mu\in\Theta$ such that $|\mu|=|\lambda|-1$ and
$\hom_{\gg_i}(V^i_\mu,V^{i+1}_\lambda)\neq 0$. We claim that
$\ext^1  (V_\mu,V_\lambda)\neq 0$. Indeed, consider non-zero
homomorphisms $\phi_i\in \hom_{\gg_i}(V^i_\mu,V^{i+1}_\lambda)$.
Set $X=\underrightarrow{\lim}X_i$, where  $X_i=V^i_\mu\oplus
V^i_\lambda$, $q_i:X_i\to X_{i+1}$ is given by
$q_i(x,y)=(e_i(x),\phi_i(x)+f_i(y))$ for $x\in V_\mu$, $y\in
V_\lambda$, and $e_i:V^i_\mu\to V^{i+1}_\mu$
  and $f_i:V^i_\lambda\to V^{i+1}_\lambda$ denote the fixed inclusions.
It is easy to see that $X$ is a non-trivial extension of $V_\mu$ by
  $V_\lambda$.

Thus, we have a non-zero homomorphism $I_\lambda\to I_\mu$. By
Lemma \ref{surj}, it is surjective. Hence the Loewy length of
$I_\lambda$ is greater or equal to the Loewy length of $I_\mu$ plus 1. The
statement follows.
\end{proof}

The following theorem strengthens the claim of Corollary \ref{tens}.

\begin{theo}\label{subquotients} Let $M\in\Int_{\gg}$. Then $M\in\lind_\gg$
if and only if there
  exists a finite subset $\Theta_M\subset\Theta$ such that any simple subquotient
of $M$ is isomorphic to $V_\mu$ for $\mu\in \Theta_M$.
\end{theo}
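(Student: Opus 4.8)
The plan is to prove both implications, with the harder direction being the ``only if'' part. The ``if'' direction is essentially Corollary \ref{tens}: if every simple subquotient of $M$ is of the form $V_\mu$ with $\mu$ in a fixed finite set $\Theta_M$, then by finiteness of $\Theta_M$ there is a uniform bound $k = \max_{\mu\in\Theta_M}|\mu|$, and part c) of Corollary \ref{tens} gives $M\in\lind_\gg$ provided we also know $M$ has finite Loewy length; but finite Loewy length is part of the hypothesis $M\in\Int_\gg$ only if we read it that way — more carefully, the statement as phrased presupposes $M\in\Int_\gg$, so for the ``if'' direction I would additionally need to observe that the bounded-subquotient condition already forces finite Loewy length, which follows from Corollary \ref{tens} a) applied to socle layers together with the fact that the socle filtration of such an $M$ cannot have more than some bounded number of layers (each layer being a sum of $V_\mu$'s with $|\mu|\le k$, and $\ext^1(V_\mu,V_\nu)$ forcing $|\nu| < |\mu|$ by \refle{injhull}-type reasoning, so the Loewy length is at most $k+1$).

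For the ``only if'' direction, suppose $M\in\lind_\gg$. First I would reduce to the case $M$ semisimple: since $M$ has finite Loewy length, it suffices to bound the simple subquotients appearing in each socle layer $\soc^i(M)/\soc^{i-1}(M)$, and finitely many layers with a common finite bound give a common finite bound for $M$. So assume $M$ is semisimple, $M = \bigoplus_{\lambda\in S} (M^\lambda\otimes V_\lambda)$ for some index set $S\subseteq\Theta$ and nonzero multiplicity spaces $M^\lambda$; I must show $S$ is finite. By Corollary 6.3 (simple objects of $\lind_\gg$ are simple tensor modules), each $V_\lambda$ occurring is indeed a simple tensor module, so $S\subseteq\Theta$ is automatic; the content is finiteness of $S$. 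The key leverage is dualization: $M^*\in\Int_\gg$ and has finite Loewy length, and $M^* = \prod_{\lambda\in S}(M^\lambda)^*\otimes V_\lambda^*$. Now $M^*$ must be integrable over each $\gg_i$, so by \refle{crit} only finitely many non-isomorphic simple $\gg_i$-modules $N$ have $\hom_{\gg_i}(N,M)\neq 0$. For fixed large $i$, the canonical $\gg_i$-constituent of $V_\lambda$ is $V_\lambda^i$, and distinct $\lambda\in\Theta$ with $|\lambda|$ large enough give distinct $\gg_i$-highest weights — more precisely, I would use that for each fixed $i$ the map $\lambda\mapsto (\text{canonical }\gg_i\text{-constituent})$ is injective on the set of $\lambda\in\Theta$ with $|\lambda|$ bounded, but since $|\lambda|$ is a priori unbounded on $S$ I need another input.

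The real obstacle, and the step I expect to carry the weight of the argument, is ruling out an infinite set $S$ with $|\lambda|\to\infty$. Here is the mechanism I would use: if $S$ were infinite, then since for each $k$ the set $\{\mu\in\Theta : |\mu|\le k\}$ is finite, $S$ contains $\lambda$ with $|\lambda|$ arbitrarily large. Take $\lambda\in S$ with $|\lambda|$ very large; then $V_\lambda$ is a direct summand, hence a submodule, of $M$, so $M^* $ surjects onto $V_\lambda^*$, and $(V_\lambda^*)$ has Loewy length $|\lambda|+1$ by \refprop{linj} (note $I_\lambda = ((V_\lambda)_*)^* = V_\lambda^*$ when $\gg$ is orthogonal/symplectic, and in general the relevant injective hull $(V_\lambda)_*{}^*$ has Loewy length $|\lambda|+1$). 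A surjection $M^*\twoheadrightarrow I_\lambda$ forces the Loewy length of $M^*$ to be at least $|\lambda|+1$. Since $\lambda$ ranges over $S$ with $|\lambda|$ unbounded, the Loewy length of $M^*$ would be infinite, contradicting $M^*\in\lind_\gg$. This gives $\sup_{\lambda\in S}|\lambda| =: k < \infty$, whence $S\subseteq\{\mu\in\Theta:|\mu|\le k\}$ is finite. I would also need to double-check that $M^*$ really does surject onto $I_\lambda$ rather than merely onto $V_\lambda^*$: the surjection $M\twoheadleftarrow V_\lambda$ dualizes to $M^*\to V_\lambda^*$, and since $M_*$ makes sense as $\bigoplus(M^\lambda\otimes (V_\lambda)_*)$ — so that the relevant map is $M^* \to ((V_\lambda)_*)^* = I_\lambda$ through the inclusion $(V_\lambda)_* \hookrightarrow M_*$ dualized — this is the map to use, and by \refle{surj} (any nonzero map from an injective of $\lind_\gg$ into $I_\mu$ is surjective, applied after embedding $M^*$ into an injective of $\lind_\gg$, using Corollary \ref{injtenscat}) surjectivity is guaranteed. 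Assembling: finite Loewy length of $M$, finitely many layers, each with bounded $|\lambda|$, and $M\in\lind_\gg$ automatically gives all duals in $\lind_\gg$ too, completing the proof.
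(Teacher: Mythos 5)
Your ``only if'' direction is essentially the paper's own argument: reduce to the semisimple case, note that infinitely many pairwise non-isomorphic constituents $V_\lambda$ force $|\lambda|$ to be unbounded, and then use Proposition \ref{linj} to see that $M^*$ contains (or maps onto) duals $((V_\lambda)_*)^*=I_{\lambda_*}$ of unbounded Loewy length, contradicting $M^*\in\lind_\gg$. Apart from minor loose ends (you should check that socle layers of an object of $\lind_\gg$ are again in $\lind_\gg$ before dualizing them), this half is sound and matches the paper.

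The ``if'' direction, however, has a genuine gap. By the paper's definition, finite Loewy length means the socle filtration is finite \emph{and exhaustive}. Your argument only bounds the number of socle layers (via $\ext^1_\gg(V_\mu,V_\nu)\neq 0\Rightarrow\mu<\nu$, which incidentally gives $|\mu|<|\nu|$, not $|\nu|<|\mu|$ as you wrote); it says nothing about why $\soc^k(M)$ eventually equals $M$, nor even why $\soc(M)\neq 0$. That a module in $\Int_\gg$ with bounded simple subquotients must have a nonzero socle and be exhausted by its socle filtration is precisely the content to be proved here, so assuming the module is built up by its socle filtration is circular. (A further small issue: a simple in layer $i+1$ having nontrivial extension over the semisimple layer $i$ does not immediately yield nonzero $\ext^1$ with a single simple summand, since the layer may be an infinite direct sum.) The paper closes this gap by a top-down argument: extending $0\to V_\lambda\to I_\lambda$ to a minimal injective resolution, it shows that $\ext^1_\gg(M',V_\lambda)=0$ for every quotient $M'$ of $M$ whenever $\lambda\in\Theta_M$ is minimal for the order on $\Theta$; hence every $V_\lambda$-subquotient of $M$ is a quotient, $M$ surjects onto a direct sum of copies of $V_\lambda$ with kernel having $\Theta$-set $\Theta_M\setminus\{\lambda\}$, and induction on $\mathrm{card}\,\Theta_M$ produces a finite filtration with semisimple layers, which gives finite Loewy length; only then does Corollary \ref{tens} c) apply. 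You need some argument of this kind (or another proof of exhaustiveness) to complete your ``if'' direction.
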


\begin{proof}
Assume that $M\in\lind_\gg$. It is sufficient to prove the 
existence of $\Theta_M$ for a semisimple
  $M$ since then the general case follows from Lemma \ref{injhull}.
Without loss of generality we may assume that $M= \bigoplus_{j\in
C} V_{\lambda_j}$, where $V_{\lambda_j}$ are pairwise
non-isomorphic. We claim that if $C$ is infinite, then $M^*$ does
not have finite Loewy length. Indeed, $M^*$ contains a submodule
isomorphic to $\bigoplus_{j\in C} I_{\mu_j}$, where
$V_{\mu_j}=(V_{\lambda_j})_*$. If $C$ is infinite, then
$|\mu_j|=|\lambda_j|$ is unbounded and the socle filtration of
$\bigoplus_{j\in C} I_{\mu_j}$ is infinite. This proves one direction.

Now assume that $M$ admits a finite set $\Theta_M$ as in the statement
of the theorem. We claim first that if $M'$ is a quotient of $M$ and
$\ext_\gg^1(M',V_\lambda)\neq 0$ for some $\lambda\in\Theta$, then $M$
has a subquotient isomorphic to $V_\mu$ for some $\mu <\lambda$. Indeed,
by extending the sequence $0\rightarrow V_\lambda\rightarrow
I_\lambda$ to a minimal injective resolution $0\rightarrow V_\lambda
\rightarrow I_\lambda\stackrel{i}{\rightarrow}I_\lambda^1\rightarrow ...$,
we see that there is a non-zero homomorphism $M'\stackrel{p}{\rightarrow} I_\lambda^1$.
Furthermore, by the minimality of the resolution, we have $\soc(I_\lambda^1)\subset\im i$.
Hence by Lemma 6.9 every simple constituent of $\soc(I_\lambda^1)$ is
of the form $V_\nu$ for $\nu <\lambda$. Since $(\im p)\cap\soc(I_\lambda^1)\neq 0$,
some simple constituent of $\soc(I'_\lambda)$ is isomorphic to a subquotient of $M'$
and thus of $M$.

We show now that $M$ has finite Loewy length. Consider a minimal (with respect
to the order $\leq$) weight $\lambda\in\Theta$. The above argument shows that
$\ext_\gg^1(M',V_\lambda)=0$ for any quotient $M'$ of $M$. This implies that
every subquotient of $M$ isomorphic to $V_\lambda$ is a quotient of $M$.
Hence $M$ admits a surjective homomorphism $\zeta :M\rightarrow M_\lambda$,
where $M_\lambda$ is isomorphic to a direct sum of copies of $V_\lambda$
and $\Theta_{\ker\zeta}=\Theta_M\setminus\{\lambda\}$. By an induction argument
we obtain that $M$ has finite Loewy length. Therefore $M\in\lind_\gg$
by Corollary \ref{tens} c).
\end{proof}
\begin{corollary}\label{agmod}
A $\gg$-module $M\in\Int_\gg$ is an object of $\lind_\gg$ if and only if both
$M$ and $\Gamma_\gg(M^*)$ have finite Loewy length.
\end{corollary}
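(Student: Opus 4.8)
The plan is to deduce this from \refth{subquotients} and \refth{slind} together with \refprop{mult}-style finiteness arguments. The forward implication is immediate: if $M\in\lind_\gg$, then $M$ has finite Loewy length by the very definition of $\lind_\gg$, and $M^*\in\Int_\gg$ with finite Loewy length (again by definition, since $\lind_\gg$ is closed under algebraic dualization); hence $\Gamma_\gg(M^*)=M^*$ has finite Loewy length. So the substantive content is the converse.

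For the converse, suppose $M\in\Int_\gg$ is such that both $M$ and $\Gamma_\gg(M^*)$ have finite Loewy length. By \refth{subquotients}, it suffices to produce a finite subset $\Theta_M\subset\Theta$ containing the highest weights of all simple subquotients of $M$; in particular I must first show that every simple subquotient of $M$ is a simple tensor module. The idea is to pass to the socle filtration of $M$, which is finite by hypothesis, and reduce to the case where $M$ is semisimple: each layer $\soc^i(M)/\soc^{i-1}(M)$ is a semisimple submodule of a subquotient of $M$, and if each layer has the desired property then so does $M$ (the finite set $\Theta_M$ is the union of the finite sets for the layers). So assume $M=\bigoplus_j S_j$ is semisimple. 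Each simple summand $S_j$ is a simple integrable module; I want to show $S_j$ is a simple tensor module and that only finitely many isomorphism classes occur. For the first point, the key is that $\Gamma_\gg(M^*)$ has finite Loewy length and $M^*$ surjects onto each $S_j^*$ (dually, $S_j$ embeds in $M$), so $\Gamma_\gg(S_j^*)$ is a subquotient — in fact one should check it is a submodule up to the relevant functoriality — of $\Gamma_\gg(M^*)$, hence also of finite Loewy length; then \refth{slind} applies to give that $S_j$ is a simple tensor module $V_{\lambda_j}$.

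The main obstacle is the finiteness of the set $\{\lambda_j\}$: knowing each summand is some $V_{\lambda_j}$ is not enough, since a priori infinitely many distinct $\lambda_j$ could occur with the $|\lambda_j|$ unbounded. Here I would reuse the argument already appearing in the proof of \refth{subquotients}: if $M=\bigoplus_{j\in C}V_{\lambda_j}$ with infinitely many pairwise non-isomorphic summands, then $M^*$ contains a submodule isomorphic to $\bigoplus_{j\in C} I_{\mu_j}$ with $\mu_j=(\lambda_j)_*$, and since $\Gamma_\gg(M^*)$ contains this direct sum while $|\mu_j|=|\lambda_j|$ would then be unbounded, \refprop{linj} forces the socle filtration of $\Gamma_\gg(M^*)$ to be infinite — contradicting our hypothesis. (One must be slightly careful: apply this after grouping isomorphic summands, and use that for each fixed $k$ only finitely many $\mu\in\Theta$ have $|\mu|\le k$, so boundedness of $|\lambda_j|$ plus finitely many classes of each bounded length gives an honest finite $\Theta_M$.) Once $M$ is semisimple with a finite $\Theta_M$, \refth{subquotients} gives $M\in\lind_\gg$; lifting back through the finite socle filtration then gives $M\in\lind_\gg$ in general, since $\lind_\gg$ is closed under extensions within $\Int_\gg$ of modules of finite Loewy length with subquotients among a fixed finite $\Theta_M$, again by \refth{subquotients}.
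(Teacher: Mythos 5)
Your forward direction and your treatment of the semisimple case are correct (and essentially the paper's). The genuine gap is in the reduction of the general case to the semisimple one via the socle filtration: you never verify that a layer $N=\soc^i(M)/\soc^{i-1}(M)$ inherits the hypothesis that $\Gamma_\gg(N^*)$ has finite Loewy length, and this is exactly the non-formal point of the whole corollary. Since $\soc^i(M)$ is a \emph{submodule} of $M$, its dual is a \emph{quotient} of $M^*$, and $\Gamma_\gg$ is only left exact: an integrable vector of $(\soc^i(M))^*$ need not lift to an integrable vector of $M^*$, so finite Loewy length of $\Gamma_\gg(M^*)$ does not formally pass to $\Gamma_\gg((\soc^i(M))^*)$, nor to its submodule $\Gamma_\gg(N^*)$. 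Your hedged claim that $\Gamma_\gg(S_j^*)$ is a ``subquotient/submodule'' of $\Gamma_\gg(M^*)$ is obvious only when $S_j$ is a direct summand (i.e.\ inside the semisimple case, where $S_j^*$ is a summand of $M^*$); for the higher layers it is precisely what must be proved, and the correct assertion is not that $\Gamma_\gg(A^*)$ is a submodule but that the map $\Gamma_\gg(M^*)\to\Gamma_\gg(A^*)$ is \emph{surjective} for a submodule $A\subset M$.

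The needed statement is true and your argument can be repaired: the kernel of $M^*\to A^*$ is the full dual $(M/A)^*$ of an integrable module, and $\ext^1_\gg(X,(M/A)^*)=0$ for all $X\in\Int_\gg$ by \refprop{ext}; hence for $x\in\Gamma_\gg(A^*)$ the preimage in $M^*$ of the integrable module $U(\gg)\cdot x$ splits as $(M/A)^*\oplus U(\gg)\cdot x$, so $x$ lifts to $\Gamma_\gg(M^*)$. With this lemma inserted, $\Gamma_\gg((\soc^i(M))^*)$ is a quotient of $\Gamma_\gg(M^*)$, hence of finite Loewy length, and your route (layers are semisimple, apply \refth{slind} and the $\bigoplus_j I_{\mu_j}$ argument with \refprop{linj}, then \refth{subquotients}) goes through. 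Note that the paper avoids this issue entirely by inducting on the Loewy length through the maximal semisimple quotient $\Top(M)$ rather than the socle: $(\Top(M))^*$ is a submodule of $M^*$, so $\Gamma_\gg((\Top(M))^*)\subset\Gamma_\gg(M^*)$ needs only left exactness; the semisimple case then gives $\Top(M)\in\lind_\gg$, and the exact sequence $0\to(\Top(M))^*\to\Gamma_\gg(M^*)\to\Gamma_\gg((\Ker\pi)^*)\to 0$ plus the induction hypothesis applied to $\Ker\pi$ yields $M^*=\Gamma_\gg(M^*)\in\Int_\gg$ and $M\in\lind_\gg$. So either add the lifting lemma or switch from $\soc$ to $\Top$; as written, the reduction step is unjustified.
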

\begin{proof}
In one direction the statement is trivial. We need to prove that, 
if $M\in\Int_\gg$ satisfies the above two conditions, then $M^*\in\Int_\gg$.
For a semisimple $M$ this follows directly from Theorem \ref{subquotients} (as we have already
pointed out). The argument gets completed by induction on the Loewy length. Let $M \in\Int_\gg$
have Loewy length $k$, and $\Gamma_\gg(M^*)$ have finite Loewy length.
Consider the homomorphism $\pi:M \rightarrow \ \Top(M)$ onto the maximal semisimple
quotient $\Top(M)$ of $M$. Then $\Gamma_\gg((\Top(M))^*)\subset\Gamma_\gg(M^*)$, hence
$\Top(M)\in\lind_\gg$, i.e. in particular $(\Top(M))^*\in\Int_\gg$. Therefore there is an exact 
sequence $$0 \rightarrow (\Top(M))^* \rightarrow\Gamma_\gg(M^*)\rightarrow\Gamma_\gg((\Ker\pi)^*)
\rightarrow 0,$$
implying that $\Gamma_\gg((\Ker\pi)^*)$ has finite Loewy length. Since the Loewy length
of $\Ker\pi$ equals $k-1$, we can conclude that $(\Ker\pi)^* \in\Int_\gg$. Hence
$\Gamma_\gg(M^*)=M^*$.
\end{proof}
\begin{corollary}
$\lind_\gg$ is a tensor category with respect to $\otimes$.
\end{corollary}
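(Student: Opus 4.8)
The plan is to reduce closure of $\lind_\gg$ under $\otimes$ to the characterization of $\lind_\gg$ provided by \refth{subquotients}. First I would note that $\lind_\gg$ is already an abelian full subcategory of $\gg$-mod: it is closed under finite direct sums, and since a simple subquotient of a submodule or of a quotient of $M$ is a simple subquotient of $M$, \refth{subquotients} shows it is closed under sub- and quotient modules; the associativity and symmetry constraints and the unit object $\CC$ are then inherited from the tensor category $\gg$-mod. Hence everything comes down to showing that $M,N\in\lind_\gg$ implies $M\otimes N\in\lind_\gg$.

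To this end I would first recall that $M\otimes N$ is integrable: for $m\in M$, $n\in N$, $g\in\gg$, the $\CC[g]$-submodules $A\subset M$ and $B\subset N$ generated by $m$ and $n$ are finite-dimensional, hence so is the $\CC[g]$-stable subspace $A\otimes B\ni m\otimes n$. So by \refth{subquotients} it suffices to exhibit a finite subset $\Theta'\subset\Theta$ containing the highest weight of every simple subquotient of $M\otimes N$. Applying \refth{subquotients} to $M$ and to $N$ gives finite sets $\Theta_M,\Theta_N\subset\Theta$. For $\lambda\in\Theta_M$ and $\nu\in\Theta_N$ I would pick $p,q,r,s$ with $V_\lambda\subset T^{p,q}$ and $V_\nu\subset T^{r,s}$ (possible since these are simple tensor modules), so that $V_\lambda\otimes V_\nu\subset T^{p,q}\otimes T^{r,s}\cong T^{p+r,q+s}$. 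As recalled above from [PS], $T^{p+r,q+s}$ has finite length with all simple subquotients of the form $V_\mu$ for $|\mu|\leq p+q+r+s$, and $\{\mu\in\Theta:|\mu|\leq k\}$ is finite; thus the simple subquotients of $V_\lambda\otimes V_\nu$ lie in a finite family of $V_\mu$'s. Let $\Theta'$ be the union of these finite sets over the finitely many pairs $(\lambda,\nu)\in\Theta_M\times\Theta_N$.

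It then remains to see that every simple subquotient $S$ of $M\otimes N$ has highest weight in $\Theta'$. Here I would use exactness of $\otimes_\CC$: from the socle filtrations $0=M_0\subset\dots\subset M_a=M$ and $0=N_0\subset\dots\subset N_b=N$ (finite because $M,N\in\lind_\gg$) one obtains a finite filtration of $M\otimes N$ with successive quotients $(M_i/M_{i-1})\otimes(N_j/N_{j-1})$; since $M_i/M_{i-1}$ and $N_j/N_{j-1}$ are semisimple with simple summands among $\{V_\lambda:\lambda\in\Theta_M\}$ resp. $\{V_\nu:\nu\in\Theta_N\}$, each such quotient is a direct sum of modules of the form $V_{\lambda_k}\otimes V_{\nu_l}$ with $\lambda_k\in\Theta_M$, $\nu_l\in\Theta_N$. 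Hence $S$ is a simple subquotient of such a direct sum; lifting a generator of $S$ shows $S$ is a subquotient of a finite partial sum, and since a simple subquotient of a finite direct sum $X\oplus Y$ is always a subquotient of $X$ or of $Y$, $S$ is in fact a subquotient of a single $V_{\lambda_k}\otimes V_{\nu_l}$. Therefore $S\cong V_\mu$ with $\mu\in\Theta'$, and \refth{subquotients} gives $M\otimes N\in\lind_\gg$.

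I expect the one genuinely delicate step to be the last: handling the simple subquotients of the (possibly infinite) direct sums that arise as layers of the filtration. Everything else is bookkeeping or a direct appeal to \refth{subquotients} and to the known structure of $T^{p,q}$. As a slightly different route for the last two paragraphs, one could instead invoke \refcor{tens}: the same filtration shows that $M\otimes N$ has finite Loewy length and that all of its simple subquotients are of the form $V_\lambda$ with $|\lambda|$ bounded, which is precisely the hypothesis of \refcor{tens} c).
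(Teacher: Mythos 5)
Your proof is correct and follows the paper's own route: the paper simply observes that closure under $\otimes$ "follows immediately from Theorem \ref{subquotients}", and your argument is exactly the detailed verification of that claim (filtering $M\otimes N$ by the tensored socle filtrations and bounding the simple subquotients via $V_\lambda\otimes V_\nu\subset T^{p+r,q+s}$). The extra bookkeeping about subquotients of infinite direct sums is sound and just makes explicit what the paper leaves to the reader.
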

\begin{proof}
It suffices to show that $\lind_\gg$ is closed with respect to $\otimes$.
The fact that, if $M\in\lind_\gg$ and $M'\in\lind_\gg$ then $M\otimes M'\in\lind_\gg$,
follows immediately from Theorem \ref{subquotients}.
\end{proof}

The following theorem concerns the structure of injective modules in $\lind_\gg$.
\begin{theo}\label{injfil}
Any injective module $I\in\lind_\gg$ has a finite filtration $\{I_j\}$ such
that, for each $j$, $I_{j+1}/I_j$ is isomorphic to a direct sum of copies of
${I_{\mu}}_j$ for some $\mu_j\in\Theta$.
\end{theo}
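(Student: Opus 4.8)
The plan is to build the filtration by peeling off, one at a time, the pieces of the injective module $I$ corresponding to minimal highest weights. First I would invoke Theorem \ref{subquotients}: since $I\in\lind_\gg$, there is a finite set $\Theta_I\subset\Theta$ such that every simple subquotient of $I$ is of the form $V_\mu$ for $\mu\in\Theta_I$. Pick $\mu\in\Theta_I$ that is \emph{minimal} with respect to the partial order $\leq$ on $\Theta$. Then the argument used in the proof of Theorem \ref{subquotients} shows that $\ext^1_\gg(I',V_\mu)=0$ for every quotient $I'$ of $I$ (a minimal injective resolution $0\to V_\mu\to I_\mu\to I_\mu^1\to\dots$ has $\soc(I_\mu^1)$ built only from $V_\nu$ with $\nu<\mu$, which by minimality cannot be subquotients of $I$); equivalently, every occurrence of $V_\mu$ as a subquotient of $I$ is forced to sit in the top. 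Consequently $V_\mu$ appears in $\soc(I/\rad)$-type position and one obtains a surjection $\zeta:I\twoheadrightarrow I_1'$, where $I_1'$ is a direct sum of copies of $V_\mu$ and $\Theta_{\ker\zeta}\subsetneq\Theta_I$.

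The next step is to upgrade this surjection-onto-a-semisimple to the statement that $I$ contains a submodule isomorphic to a direct sum of copies of the \emph{indecomposable injective} $I_\mu=((V_\mu)_*)^*$, with the quotient again injective in $\lind_\gg$. The key point here is that $\soc(I)$ is semisimple, so write $\soc(I)\cong\bigoplus_{\lambda\in\Theta_I}\bigoplus_{g} (V_\lambda)_g$; since $I$ is injective in $\lind_\gg$, and since $I_\lambda=((V_\lambda)_*)^*$ is by Corollary \ref{injtenscat}(a) an injective hull of $V_\lambda$ in $\lind_\gg$, the injective module $I$ must be isomorphic to $\bigoplus_{\lambda}\bigoplus_{g}(I_\lambda)_g$, i.e. a direct sum of indecomposable injectives. (A direct sum of injective hulls of the simple socle summands maps into $I$ extending the inclusion of the socle; this map is injective since it is injective on socles, and its image is an injective, hence a direct summand, and it contains the whole socle, so it is everything.) Now group the summands: let $\mu$ be minimal in $\Theta_I$ and set $I_1:=\bigoplus_{g}(I_\mu)_g$, the sum of all indecomposable injective summands with socle $V_\mu$. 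I claim $I_1$ is a submodule whose quotient $I/I_1$ is again injective in $\lind_\gg$ — indeed $I/I_1\cong\bigoplus_{\lambda\neq\mu}\bigoplus_g(I_\lambda)_g$, a direct sum of indecomposable injectives, hence injective in $\lind_\gg$ by Corollary \ref{injtenscat}, and its simple-subquotient set is contained in $\Theta_I\setminus\{\mu\}$ (using that all subquotients of $I_\mu$ other than $V_\mu$ have strictly smaller $|\cdot|$, by Lemma \ref{injhull}, combined with minimality — more carefully, one just observes $\Theta_{I/I_1}\subseteq\Theta_I\setminus\{\mu\}$ is not literally needed; what is needed is that $|\Theta_{I/I_1}|<|\Theta_I|$, which holds).

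Finally I would run the induction on $|\Theta_I|$. The base case $\Theta_I=\emptyset$ forces $I=0$, trivially filtered. For the inductive step, apply the induction hypothesis to $I/I_1\in\lind_\gg$, which is injective with strictly smaller $\Theta$: it has a finite filtration $\{\bar I_j\}$ with successive quotients direct sums of copies of $I_{\mu_j}$. Pulling back along $I\to I/I_1$ and prepending $I_0=0\subset I_1\subset\dots$ gives the desired finite filtration of $I$, with the bottom quotient $I_1/I_0=I_1$ a direct sum of copies of $I_\mu$. The main obstacle, and the step deserving the most care, is the structural claim that an injective object of $\lind_\gg$ is a direct sum of the indecomposable injectives $I_\lambda$ indexed by its socle; this requires knowing that $\soc(I)$ is semisimple (clear), that each $I_\lambda$ is an injective hull of $V_\lambda$ in $\lind_\gg$ (Corollary \ref{injtenscat}(a)), and that injective hulls of the socle summands assemble into an injective submodule equal to all of $I$ — the one subtlety being that an arbitrary (possibly infinite) direct sum of the $I_\lambda$ need not a priori be injective in $\lind_\gg$, so one should instead argue that $I$, being injective, receives the inclusion of $\bigoplus$ of injective hulls of socle constituents, splits off that image as a direct summand, and the complement has zero socle hence is zero.
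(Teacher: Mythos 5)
The central step of your argument---that an injective object $I$ of $\lind_\gg$ decomposes as a direct sum of indecomposable injectives $I_\lambda$ indexed by the simple constituents of $\soc(I)$---is false, and the example immediately following this theorem in the paper (Example 6.16) is an explicit counterexample. There $M=\bigoplus_{f\in\mathcal F}V_f$ with $\card\,\mathcal F=\beth_0$, and the injective hull $I_M$ of $M$ (injective in $\Int_\gg$, hence in $\lind_\gg$, and an object of $\lind_\gg$ by Corollary \ref{injtenscat} c)) has socle $M$ but is \emph{not} isomorphic to $\bigoplus_{f\in\mathcal F}(I_{\varepsilon_1})_f$: it contains that direct sum as a proper submodule with trivial quotient of cardinality $\card\,2^{\mathcal F}$. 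Equivalently, $\bigoplus_{f\in\mathcal F}(I_{\varepsilon_1})_f$ is not injective in $\lind_\gg$: the extension $0\to\bigoplus_f(I_{\varepsilon_1})_f\to I_M\to T\to 0$ cannot split, since $I_M$ has no nonzero trivial submodules ($\soc(I_M)=M$). This is exactly the subtlety you flag at the end, but your proposed repair is circular: the canonical map $\bigoplus_{\lambda,g}(I_\lambda)_g\to I$ extending the socle inclusion is indeed injective (its kernel has zero socle and finite Loewy length), yet to ``split off the image as a direct summand'' you would need the image, i.e.\ the possibly infinite direct sum of the $I_\lambda$, to be injective---which is precisely what fails; and a submodule containing $\soc(I)$ need not be all of $I$, as $\bigoplus_f(I_{\varepsilon_1})_f\subsetneq I_M$ shows. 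So the induction on $\Theta_I$ never gets started, and your first paragraph (the surjection onto copies of a minimal $V_\mu$) does not substitute for the missing step.

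This is why the paper proves only a filtration statement and does so by a different mechanism: it constructs $I_{k+1}\supset I_k$ inductively along the socle filtration of the successive quotients. For each simple summand $Y_f$ of $\soc(I/I_k)$, Lemma \ref{glue} produces a module $\tilde Y_f$ with $p^{-1}(Y_f)\subset\tilde Y_f$ and $\tilde Y_f/I_k\simeq I_{\mu_f}$; the injectivity of $I$ extends the inclusion $p^{-1}(Y_f)\subset I$ to a map $\tilde Y_f\to I$, the induced map $\bigoplus_f I_{\mu_f}\to I/I_k$ is injective because it restricts to an isomorphism on socles, and $I_{k+1}$ is defined as the preimage of its image; the process terminates because $I$ has finite Loewy length. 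If you wish to keep your overall strategy, you must replace the direct-sum decomposition of $I$ by this kind of ``embed $\bigoplus_f I_{\mu_f}$ into the quotient and pull back'' argument; as stated, your proof establishes only that $I$ contains $\bigoplus_{\lambda,g}(I_\lambda)_g$ as an essential submodule, not that it equals it.
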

\begin{proof}
We use induction on the length of the filtration. Assume that
$0=I_0\subset I_1\subset I_k$ 
is already constructed. Let $\soc(I/I_k)=\bigoplus_{f\in\mathcal
F}Y_f$ for a family $\mathcal F$ of simple modules $Y_f$ (there are
only finitely many non-isomorphic modules among $\{Y_f\}_{f\in\mathcal F}$).
Denoting by $p$ the projection $\mu_f:I\rightarrow I/I_k$, set
$X_f:=p^{-1}(Y_f)$. By Lemma \ref{glue}, there exists $\tilde{Y_f}\in\Int_\gg$
such that $I_k\subset X_f\subset\tilde{Y_f}$ and $\tilde{Y_f}/I_k\simeq I_{\mu_f}$,
$\mu_f\in\Theta$ being the highest weight of $Y_f$. The inclusion $X_f\subset I$
induces a homomorphism $\psi_f:\tilde{Y_f}\rightarrow I$. Let
$\psi_f:\tilde{Y_f}/I_k\tilde{\rightarrow}{I_\mu}_f\rightarrow I/I_k$
the corresponding homomorphism of quotients. Then
$\bar\psi :=\bigoplus_{f\in\mathcal F}\bar\psi_f:\bigoplus_{f\in\mathcal F}I_{\mu_f}\rightarrow I$
is injective since its restriction to $\soc(\bigoplus_{f\in\mathcal F}I_{\mu_f})$
is an isomorphism. This shows that if $I_{k+1}:=p^{-1}(\bar\psi(\bigoplus_{f\in\mathcal F}I_{\mu_f}))$,
there is an isomorphism $I_{k+1}/I_k\simeq\bigoplus_{f\in\mathcal F}I_{\mu_f}$.

The filtration terminates at a finite step as $I$ has finite Loewy length.
\end{proof}
\begin{example}
Let $\gg=sl(\infty),o(\infty),sp(\infty)$ and let $M$ be a countable
direct sum of copies of $V$, i.e. $M=\bigoplus_{f\in\mathcal F}V_f$,
$\mathrm{card} \mathcal F = \beth_0$. Then $(M_*)^*$ can be identified
with the set of all infinite matrices ${\{b_{ij}\}_{i,j\in\mathbb{Z}}}_{>0}$,
the action of $\gg$ being left multiplication. The socle $\soc((M_*)^*)$
is the space of matrices $F_r$ with finitely many non-zero rows and is
isomorphic to $\bigoplus_{g\in 2^{\mathcal F}}V_g$. (Note that the
module $\prod_{f\in\mathcal F}V_f$ considered in Example 6.2 d) is a
submodule of $(M_*)^*$ and has the same socle as $(M_*)^*$). We thus obtain
the diagram 

$$\begin{array}[c]{ccc}
\bigoplus_{g\in 2^{\mathcal F}}V_g&\subset &(M_*)^*\\
\cup &&\cup \\
M&\subset &I_M
\end{array},$$

\noindent $I_M$ being the injective hull of $M$ within $(M_*)^*$. Moreover, $I_M$ is the
largest submodule of $(M_*)^*$ such that $\gg\cdot I_M=M$. A direct computation
shows that $I_M$ coincides with the space of all matrices with finite rows
(i.e. each row has finitely many non-zero entries).

Note that $I_M\not\simeq\bigoplus_{f\in\mathcal F}(I_{\varepsilon_1})_f$
($\varepsilon_1\in\Theta$ is the highest weight of $V$). In fact
$I_M$ has the following filtration as in Theorem \ref{injfil}:
$0\subset\bigoplus_{f\in\mathcal F}(I_{\varepsilon_1})_f\subset I_M$.
Here $I_M/\bigoplus_{f\in\mathcal F}(I_{\varepsilon_1})_f$ is a trivial
module of cardinality $2^{\mathcal F}$ which is interpreted as a
direct sum of $2^{\mathcal F}$ copies of $I_0$.
\end{example}

For any $k\in\ZZ_{>0}$ we now define $\lind^k_\gg$ be the subcategory
of modules whose simple quotients are isomorphic to $V_\mu$ with
$|\mu|\leq k$. Theorem \ref{subquotients} and Corollary \ref{tens} a) imply
the following.
\begin{corollary}\label{dircat} The category $\lind^k_\gg$ is closed
under direct products and direct sums.
\end{corollary}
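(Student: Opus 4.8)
The plan is to reduce everything to closure under direct products. Write $\Theta_{\le k}:=\{\mu\in\Theta\mid|\mu|\le k\}$, a finite set, and recall that $\lind^k_\gg$ consists of the $M\in\lind_\gg$ all of whose simple subquotients are isomorphic to some $V_\mu$ with $|\mu|\le k$. Two preliminary observations follow at once from Theorem \ref{subquotients}. First, $\lind^k_\gg$ is closed under submodules: a submodule $N$ of $M\in\lind^k_\gg$ is integrable and its simple subquotients form a subset of those of $M$, so $N\in\lind_\gg$ and $N\in\lind^k_\gg$. Second, $\lind^k_\gg$ is closed under direct sums: a generator of a simple subquotient of $\bigoplus_{f}M_f$ has finite support, so each such subquotient already occurs in a finite partial sum, hence is a $V_\mu$ with $\mu\in\Theta_{\le k}$, and Theorem \ref{subquotients} applies. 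Since $\bigoplus_f M_f$ is a submodule of $\prod_f M_f$, it therefore suffices to prove closure under direct products.

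So fix a family $\{M_f\}_{f\in\mathcal F}$ in $\lind^k_\gg$. The key step is to embed each $M_f$ into a product of copies of the injective modules $I_\mu=((V_\mu)_*)^*$ with $\mu\in\Theta_{\le k}$. Since $M_f\in\lind_\gg$ has finite Loewy length, its socle is essential in it; and since the simple subquotients of $M_f$ are simple tensor modules $V_\mu$ with $|\mu|\le k$, we may write $\soc(M_f)=\bigoplus_{j\in J_f}V_{\lambda_{f,j}}$ with all $\lambda_{f,j}\in\Theta_{\le k}$. The product $\prod_{j\in J_f}I_{\lambda_{f,j}}$ is an injective object of $\Int_\gg$: it is a product of injectives (Proposition \ref{inj}), and one checks via Lemma \ref{crit} that it is integrable, using that for each $i$ all $\gg_i$-constituents of an $I_\mu$ with $\mu\in\Theta_{\le k}$ are of the form $V_\nu^i$ with $|\nu|\le k$ (as in the proof of Lemma \ref{injhull}). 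Hence the tautological inclusion $\soc(M_f)\hookrightarrow\prod_{j\in J_f}I_{\lambda_{f,j}}$ extends to a homomorphism $M_f\to\prod_{j\in J_f}I_{\lambda_{f,j}}$, and this homomorphism is injective because it is injective on the essential submodule $\soc(M_f)$.

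Now I would take the product over $f$ and regroup the factors $I_{\lambda_{f,j}}$ according to the value $\mu=\lambda_{f,j}\in\Theta_{\le k}$, interchanging the two products. This produces an embedding of $\prod_{f\in\mathcal F}M_f$ into $\prod_{\mu\in\Theta_{\le k}}\bigl(\prod_{b\in B_\mu}(I_\mu)_b\bigr)$ for suitable index sets $B_\mu$, and since $\Theta_{\le k}$ is finite this is a finite direct sum. For each $\mu\in\Theta_{\le k}$ the module $I_\mu$ has finite Loewy length and, by Lemma \ref{injhull}, all of its simple subquotients are of the form $V_\nu$ with $|\nu|\le|\mu|\le k$; hence Corollary \ref{tens} a) applies to $M=I_\mu$ and shows that $\prod_{b\in B_\mu}(I_\mu)_b$ is an integrable module of finite Loewy length all of whose simple subquotients are $V_\nu$ with $|\nu|\le k$. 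The same then holds for the finite direct sum $\bigoplus_{\mu\in\Theta_{\le k}}\prod_{b\in B_\mu}(I_\mu)_b$, so this module lies in $\lind^k_\gg$ by Theorem \ref{subquotients}; and $\prod_{f\in\mathcal F}M_f$, being a submodule of it, lies in $\lind^k_\gg$ by the closure under submodules. This completes the proof.

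The routine ingredients — exactness of direct products, the fact that a module of finite Loewy length has essential socle, the integrability checks via Lemma \ref{crit}, and the bookkeeping behind the regrouping isomorphism — I would dispatch briefly. The genuinely substantial point, and the step I would be most careful about, is the passage from an \emph{arbitrary} indexing family $\mathcal F$ to data controlled by the \emph{finitely many} injectives $\{I_\mu\}_{\mu\in\Theta_{\le k}}$: it is exactly the finiteness of $\Theta_{\le k}$, together with the already established Corollary \ref{tens} a) for products of copies of a \emph{single} module, that makes the argument close. (In particular, this route avoids having to bound the Loewy lengths of the individual $M_f$ a priori, since the Loewy length of $\prod_{b\in B_\mu}(I_\mu)_b$ is already supplied, uniformly in $\mathcal F$, by Corollary \ref{tens} a).)
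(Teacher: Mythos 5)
Your proof is correct. The paper itself offers no written argument beyond the sentence that Theorem \ref{subquotients} and Corollary \ref{tens} a) imply the statement, so strictly speaking you have supplied the missing deduction rather than reproduced one. The nontrivial point you correctly identified and handled is that Corollary \ref{tens} a), as stated, only treats direct products of copies of a \emph{single} module, whereas \refcor{dircat} concerns an arbitrary family; your device for bridging this -- using that each $M_f$ has essential socle (finite Loewy length), embedding $M_f$ into a product of the injectives $I_\mu=((V_\mu)_*)^*$ with $\mu$ in the finite set $\Theta_{\le k}$, regrouping the big product by the finitely many $\mu$, and then applying \refcor{tens} a) to each single $I_\mu$ together with closure of $\lind^k_\gg$ under submodules (via Theorem \ref{subquotients}) -- is sound, and your integrability check for the products (finitely many $\gg_i$-constituent types for fixed $i$, hence a single polynomial in $g$ annihilating the product, exactly the mechanism of the easy direction of Lemma \ref{crit}) is exactly what is needed, since Corollary \ref{tens} a) does not literally assert integrability of the product. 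An alternative, arguably closer to what the authors had in mind, is to observe that the proof of Corollary \ref{tens} a) goes through verbatim for a family $\{M_f\}$ once one knows the Loewy lengths are uniformly bounded (which holds in $\lind^k_\gg$, e.g.\ by embedding into products of the $I_\mu$, whose Loewy length is $|\mu|+1$ by Proposition \ref{linj}): each layer $\prod_f(\soc^i(M_f)/\soc^{i-1}(M_f))$ sits inside a finite direct sum of products of copies of the $V_\lambda$ with $|\lambda|\le k$, and Lemma \ref{tp} b) applies. Your route buys a cleaner reduction to already-stated results at the cost of invoking injective hulls; the adaptation route avoids injectives but requires reopening the proof of \refcor{tens} a). Either way the direct-sum case, which you also treat correctly via finite partial sums and Theorem \ref{subquotients}, is immediate.
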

\begin{corollary}
a) The category $\lind_\gg$ equals the direct limit $\underrightarrow{\lim}
\lind_\gg^k$.

b) If $\{M_f\}_{f\in\mathcal F}$ is an infinite family of objects of
$\lind_\gg$, then $\prod_{f\in\mathcal F}M_f\in\lind_\gg$ (equivalently,
$\bigoplus_{f\in\mathcal F}\in\lind_\gg$) if and only if there is $k$ such that $M_f\in\lind_\gg^k$
for all $f\in\mathcal F$.
\end{corollary}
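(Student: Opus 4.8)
The plan is to deduce both statements from \refcor{dircat} together with \refth{subquotients}. For part a), the inclusion $\lind_\gg^k\subset\lind_\gg$ holds for every $k$ by definition (modules whose simple quotients have bounded $|\mu|$ certainly have all simple subquotients bounded, once we know finite Loewy length — but this needs care, see below), so $\underrightarrow{\lim}\lind_\gg^k\subset\lind_\gg$. For the reverse inclusion, take $M\in\lind_\gg$. By \refth{subquotients} there is a finite set $\Theta_M\subset\Theta$ such that every simple subquotient of $M$ is isomorphic to $V_\mu$ for some $\mu\in\Theta_M$; in particular every \emph{simple quotient} of $M$ is such a $V_\mu$. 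Setting $k:=\max\{|\mu| : \mu\in\Theta_M\}$, which is finite since $\Theta_M$ is finite, we get $M\in\lind_\gg^k$. Hence $\lind_\gg=\bigcup_k\lind_\gg^k=\underrightarrow{\lim}\lind_\gg^k$.

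For part b), first note the equivalence between the product and sum conditions: by \refcor{dircat} the category $\lind_\gg^k$ is closed under both direct products and direct sums, and moreover $\bigoplus_{f\in\mathcal F}M_f$ is a submodule of $\prod_{f\in\mathcal F}M_f$, while conversely (as in the proof of \refle{tp} b) and \refcor{tens} a)) the finiteness of Loewy length and the control of simple subquotients pass between the two. So it suffices to treat, say, the direct product. If there is a $k$ with $M_f\in\lind_\gg^k$ for all $f$, then $\prod_{f\in\mathcal F}M_f\in\lind_\gg^k\subset\lind_\gg$ by \refcor{dircat}. Conversely, suppose no such $k$ exists, so the numbers $k_f:=\max\{|\mu| : V_\mu \text{ is a simple quotient of } M_f\}$ are unbounded over $f\in\mathcal F$. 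For each $f$ pick a simple quotient $V_{\mu_f}$ of $M_f$ with $|\mu_f|=k_f$; then $\bigoplus_{f\in\mathcal F}V_{\mu_f}$ is a quotient of $\bigoplus_{f\in\mathcal F}M_f$, and dually $\prod_{f\in\mathcal F}(V_{\mu_f})_*$ embeds into $\left(\bigoplus_{f\in\mathcal F}M_f\right)^*\subset\left(\prod_{f\in\mathcal F}M_f\right)^*$. As in the proof of \refth{subquotients}, $\prod_{f\in\mathcal F}(V_{\mu_f})_*$ contains a submodule isomorphic to $\bigoplus_{f\in\mathcal F}I_{\nu_f}$ where $V_{\nu_f}=((V_{\mu_f})_*)_*\cong V_{\mu_f}$ (using $\soc(I_{\mu})\cong V_\mu$ from \refle{injhull}), and since $|\nu_f|=k_f$ is unbounded, \refprop{linj} gives that the Loewy lengths of the $I_{\nu_f}$ are unbounded, so $\bigoplus_{f\in\mathcal F}I_{\nu_f}$ has infinite Loewy length. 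Hence $\left(\prod_{f\in\mathcal F}M_f\right)^*$ has infinite Loewy length and $\prod_{f\in\mathcal F}M_f\notin\lind_\gg$.

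The main obstacle is bookkeeping rather than conceptual: one must be careful that ``bounded simple quotients'' and ``bounded simple subquotients'' really give the same subcategory. One direction is immediate; for the other, the point is that an object $M$ of $\lind_\gg^k$ a priori only controls the \emph{top} of $M$, but iterating — applying the bound to $M$, then to $\Ker(M\to\Top M)$, and so on up the (finite) socle filtration, exactly as in the proof of \refth{subquotients} — shows all simple subquotients satisfy $|\mu|\le k$. This is why part a) genuinely uses \refth{subquotients} and not just the definitions, and it is the step to write out most carefully; everything else is a direct appeal to \refcor{dircat}, \refle{injhull}, and \refprop{linj}.
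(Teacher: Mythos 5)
Your overall route (part a) from \refth{subquotients}, the forward direction of b) from \refcor{tens} a)/\refcor{dircat}) is the paper's, but the step you flag as ``bookkeeping'' hides a genuine error. The definition of $\lind^k_\gg$ must be read as bounding the simple \emph{subquotients} (this reading is forced by the way \refcor{dircat} is deduced from \refth{subquotients} and \refcor{tens} a)); your attempted repair of the literal ``simple quotients'' reading --- applying the bound to $M$, then to $\Ker(M\to\Top(M))$, and so on --- does not work, because the defining bound applies to $M$ only and a bound on simple quotients does not pass to submodules. Concretely, $I_\lambda$ with $|\lambda|$ large is a counterexample: by \refle{injhull} its unique copy of $V_\lambda$ sits in the socle and (since $I_\lambda$ is indecomposable with Loewy length $|\lambda|+1$ by \refprop{linj}) $V_\lambda$ is never a quotient of $I_\lambda$, so all simple quotients of $I_\lambda$ are $V_\mu$ with $|\mu|<|\lambda|$, yet $V_\lambda$ is a subquotient. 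Thus ``bounded simple quotients'' and ``bounded simple subquotients'' define different classes, and with the literal reading part b) itself would fail (take $M_f=I_{\lambda_f}$ with $|\lambda_f|$ unbounded). Once $\lind^k_\gg$ is read with subquotients, part a) is immediate from \refth{subquotients}, exactly as in the paper.

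The same issue undermines your converse of b): under the correct reading, the failure of a uniform $k$ only guarantees unbounded weights among simple \emph{subquotients} of the $M_f$, which need not be visible among simple quotients, so you cannot in general choose quotients $V_{\mu_f}$ with $|\mu_f|$ unbounded and your dualization argument does not start. The paper's argument avoids duals altogether: if no uniform $k$ exists, then $\bigoplus_f M_f$ has infinitely many pairwise non-isomorphic simple subquotients, hence $\bigoplus_f M_f\notin\lind_\gg$ by \refth{subquotients}; since $\bigoplus_f M_f$ is a submodule of $\prod_f M_f$ and membership in $\lind_\gg$ passes to submodules (again by \refth{subquotients}), also $\prod_f M_f\notin\lind_\gg$. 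Two further slips in your dual computation, both repairable but worth noting: $(\bigoplus_f M_f)^*$ is a \emph{quotient} of $(\prod_f M_f)^*$, not a submodule (this still suffices, as finite Loewy length is inherited by quotients), and what embeds into $(\bigoplus_f M_f)^*=\prod_f M_f^*$ is $\prod_f (V_{\mu_f})^*\cong\prod_f I_{\nu_f}$, not $\prod_f (V_{\mu_f})_*$.
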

\begin{proof}
a) follows directly from Theorem \ref{subquotients}.

Consider now $\prod_{f\in\mathcal F}M_f$.
If $M_f\in\lind_\gg^k$ for some $k$, then 
$\prod_{f\in\mathcal F}M_f\in
\lind_\gg^k$ (and thus also $\bigoplus_{f\in\mathcal  F}M_f\in\lind_\gg^k$) 
by Corollary \ref{tens} a). If no such $k$
exists, then $\bigoplus_{f\in\mathcal F} M_f\notin\lind_\gg$ 
by Theorem \ref{subquotients}, hence also $\prod_{f\in\mathcal F}M_f\notin\lind_\gg$.
\end{proof}
\begin{corollary}\label{resolution} Every object in $\lind_\gg$ has a
 finite injective resolution.
\end{corollary}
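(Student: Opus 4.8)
The plan is to show that any object $M\in\lind_\gg$ embeds in an injective object of $\lind_\gg$ whose ``complexity'' — measured by the number of distinct highest weights appearing, or more usefully by the maximal value of $|\mu|$ — is controlled, and then to iterate, at each stage strictly decreasing a well-chosen invariant so that the process terminates after finitely many steps. The key structural input is \refcor{injtenscat}: for a simple module $M\in\lind_\gg$ the injective hull is $(M_*)^*$, and more generally every $M\in\lind_\gg$ has an injective hull $I_M\in\lind_\gg$ which, by \refth{injfil}, is built by finitely many extensions out of direct sums of the indecomposable injectives $I_\mu$.

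First I would set up the induction. Given $M\in\lind_\gg$, embed it in its injective hull $I^0:=I_M\in\lind_\gg$ (Corollary \ref{injtenscat} c)), and set $M^1:=I^0/M$, which again lies in $\lind_\gg$ since $\lind_\gg$ is closed under quotients (finite Loewy length passes to quotients, and $M^*\in\Int_\gg$ for any quotient $M$ of an object with integrable dual — this follows from \refle{crit} together with \refth{subquotients}). Repeating, one gets $0\to M\to I^0\to I^1\to\cdots$ with $I^j$ injective in $\lind_\gg$ and $M^{j+1}=I^j/\Im(M^{j-1})$. The content of the corollary is that $M^j=0$ for $j$ large enough.

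The invariant I would use to force termination is a lexicographic one: for $N\in\lind_\gg$ let $k(N):=\max\{|\mu| : V_\mu \text{ is a subquotient of } N\}$ (finite by \refth{subquotients}), and as a secondary invariant the number of $\mu$ with $|\mu|=k(N)$ appearing in $\soc(N)$, or simply the Loewy length. The main point to prove is that passing from $M^j$ to $M^{j+1}$ strictly decreases this invariant. Here \refle{injhull} is decisive: the injective hull of a simple $V_\lambda$ is $I_\lambda=((V_\lambda)_*)^*$, and every subquotient $V_\mu$ of $I_\lambda$ with $\mu\ne\lambda$ satisfies $|\mu|<|\lambda|$; hence in $I_\lambda/V_\lambda$ only strictly smaller weights occur. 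Applying this summand-by-summand to the socle of $M^j$ — using \refth{injfil} to write the relevant piece of the injective hull as a sum of $I_{\mu}$'s with $\mu$ ranging over the highest weights in $\soc(M^j)$ — shows that every simple subquotient of $I^j/\soc(M^j)$, and a fortiori of $M^{j+1}$, has $|\mu|$ strictly smaller than $k(M^j)$ whenever the top ``layer'' of $M^j$ is semisimple; a short additional argument on the socle filtration handles the general layer. Thus $k(M^{j+1})<k(M^j)$ once the highest layer has been stripped, and since $k$ takes values in a finite set of non-negative integers (bounded by $k(M)$), the process stops.

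The hard part will be bookkeeping the socle-filtration layers correctly: one embedding step $M\hookrightarrow I_M$ does not by itself reduce $k$ unless $M$ is semisimple, because $I_M$ can acquire no new large weights but $M^1=I_M/M$ still ``remembers'' the large weights sitting in the non-socle part of $M$. The clean way around this is to induct on the Loewy length of $M$ simultaneously: first resolve $\soc(M)$, a semisimple module, obtaining a finite resolution by \refth{subquotients} and the argument above; then use \refle{glue} or a diagram chase to lift this to a finite resolution of $M$ by combining it with a (shorter, by the Loewy-length induction) resolution of $M/\soc(M)$, splicing the two via the horseshoe lemma in the abelian category $\lind_\gg$ which has enough injectives (Corollary \ref{injtenscat} c)). The finiteness of the combined resolution follows because the length of an injective resolution of $M$ is then bounded by that of $\soc(M)$ plus that of $M/\soc(M)$, and the base case — $M$ semisimple — is exactly the $k$-decreasing argument, which terminates in at most $k(M)+1$ steps by \refprop{linj} (the Loewy length of $I_\lambda$ being $|\lambda|+1$).
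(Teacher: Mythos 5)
Your overall architecture is the natural one (the paper states this corollary without proof, but the intended argument is exactly an induction of this kind): resolve by iterated injective hulls, which stay in $\lind_\gg$ by Corollary \ref{injtenscat} c), note that cokernels stay in $\lind_\gg$ by Theorem \ref{subquotients}, and force termination by the invariant $k(N)=\max\{|\mu|\,:\,V_\mu\ \text{a subquotient of}\ N\}$. However, the decisive step --- that every simple subquotient of $I_{M}/\soc(M)$ has $|\cdot|$ strictly smaller than $k(M)$ --- is not established by your ``summand-by-summand'' appeal to Lemma \ref{injhull} and Theorem \ref{injfil}, and this is a genuine gap. When $\soc(M)$ contains infinitely many copies of a constituent of maximal $|\cdot|$, the injective hull is \emph{not} the direct sum of the corresponding $I_\mu$'s: the example following Theorem \ref{injfil} (hull of a countable direct sum of copies of $V$) shows $I_M\supsetneq\bigoplus_f I_{\varepsilon_1}$, and Theorem \ref{injfil} only identifies the first layer of the filtration by the socle weights --- the weights $\mu_j$ of the higher layers are precisely what you must bound, so as stated the argument assumes what it needs to prove. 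Note also that replacing the hull by the explicit injective $(\soc(M)_*)^*$ makes the statement false: $\prod_t I_\lambda/\bigoplus_t V_\lambda$ acquires new copies of $V_\lambda$ (cf. part d) of the example following Lemma \ref{tp}), so essentiality of the hull must really be used. What is missing is an Ext-type statement for \emph{infinite} direct sums: if $V_\nu$ appears above the socle of an essential extension of $S=\bigoplus_t V_{\mu_t}$, one gets a non-split extension of $V_\nu$ by a semisimple module, and one must deduce $\ext^1_\gg(V_\nu,V_{\mu_t})\neq 0$ for some $t$; this does not follow formally, since $\ext^1$ need not commute with infinite direct sums. It does follow here because $S$ has only finitely many isomorphism classes of constituents (Theorem \ref{subquotients}) and because $\hom_{\gg_i}(V^i_\nu,V_\mu)=0$ for one $i$ implies the same for all larger $i$ (restrict to the canonical constituent), so a single large $i$ works for all classes and the splitting argument from the proof of Lemma \ref{extsimp} applies. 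Combined with Lemma \ref{injhull} (or Lemma \ref{extsimp}), this gives $|\nu|<|\mu_t|$ and hence the strict decrease $k(I_M/M)<k(M)$; the base case $k=0$ is clean, since a trivial module has no proper essential extension in $\Int_\gg$ (its hull sits inside $\Gamma_\gg(T^{**})$, which is again trivial), so the resolution terminates after at most $k(M)+1$ steps.

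Two smaller remarks. First, your diagnosis that the difficulty lies with non-semisimple $M$ ``remembering'' large weights above the socle is misplaced: the same Ext-argument shows that for any $N\in\lind_\gg$ the maximal $|\cdot|$ is always attained on $\soc(N)$, so once the key lemma above is in place the plain induction on $k$ works and the Loewy-length induction with the horseshoe lemma, though legitimate (Corollary \ref{injtenscat} c) does give enough injectives in $\lind_\gg$), is an unnecessary detour; the real difficulty is the infinite multiplicity in the socle, not the higher layers of $M$. Second, the citation of Proposition \ref{linj} for termination is not what does the work --- termination comes from the weight decrease --- and the recursion ``$M^{j+1}=I^j/\Im(M^{j-1})$'' should read $M^{j+1}=I^j/M^j$.
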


We now introduce the following partial order on $\Theta$: we set
$\mu\leq\lambda$ if for any sufficiently large $i$ there exists
$j>i$ such that $\hom_{\gg_i}(V^i_\mu,V^j_\lambda)\neq 0$. If
$\mu\leq\lambda$, then $l(\lambda,\mu)$ denotes the length of a
maximal chain $\mu<\mu_1<\dots<\lambda$ in $\Theta$.

\begin{lemma}\label{extsimp} $\ext^1_\gg(V_\mu,V_\lambda)\neq 0$ if and only if $\mu<\lambda$. If $\mu<\lambda$, $\dim\ext^1_\gg(V_\mu,V_\lambda)=2^\ZZ$.
\end{lemma}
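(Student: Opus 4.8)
The plan is to establish the two directions of the ``iff'' separately, and then to pin down the dimension. For the direction $\mu<\lambda\Rightarrow\ext^1_\gg(V_\mu,V_\lambda)\neq 0$, I would reuse the construction already appearing in the proof of Proposition \ref{linj}: choosing a covering chain $\mu\lessdot\nu_1\lessdot\dots\lessdot\lambda$ in $\Theta$ (so $l(\lambda,\mu)$ steps), it suffices to treat the case $l(\lambda,\mu)=1$, since a nonzero $\ext^1$ at a covering step propagates upward via the surjections $I_\nu\twoheadrightarrow I_{\nu'}$ of Lemma \ref{surj} applied to the minimal injective resolution of $V_\lambda$. For a covering pair $\mu\lessdot\lambda$, pick $i$ large and nonzero maps $\phi_i\in\hom_{\gg_i}(V^i_\mu,V^{i+1}_\lambda)$ (nonempty by definition of the order), and glue $X_i=V^i_\mu\oplus V^i_\lambda$ along $q_i(x,y)=(e_i(x),\phi_i(x)+f_i(y))$ exactly as in Proposition \ref{linj}; the direct limit $X$ is then a non-split extension of $V_\mu$ by $V_\lambda$.

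For the converse, suppose $\ext^1_\gg(V_\mu,V_\lambda)\neq 0$. A nonzero class gives a non-split sequence $0\to V_\lambda\to Z\to V_\mu\to 0$ with $Z\in\lind_\gg$ by Corollary \ref{tens} c) (its simple subquotients are $V_\lambda,V_\mu$). Then $\soc(Z)=V_\lambda$, and functoriality of injective hulls yields a map $I_\mu=((V_\mu)_*)^*\to$ well, more directly: the inclusion $V_\lambda\subset Z$ extends to $Z\hookrightarrow I_\lambda$, so $V_\mu$ embeds in $I_\lambda/V_\lambda$, i.e. $V_\mu$ is a subquotient of $I_\lambda$ distinct from the socle. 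By Lemma \ref{injhull} every such subquotient $V_\mu$ satisfies $|\mu|<|\lambda|$ and, more precisely, $\hom_{\gg_i}(V^i_\mu,I_\lambda)\neq 0$ for all large $i$; but $I_\lambda=((V_\lambda)_*)^*$, and from the description of which $\gg_i$-modules occur in such a dual (the argument behind Lemma \ref{injhull}) one reads off $\hom_{\gg_i}(V^i_\mu,V^{i+1}_\lambda)\neq 0$ for large $i$, i.e. $\mu<\lambda$. (Strict inequality because $\mu\neq\lambda$ as $|\mu|<|\lambda|$.)

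For the dimension count when $\mu<\lambda$: I would compute $\ext^1_\gg(V_\mu,V_\lambda)$ as a subquotient of $\hom_\gg$ into the first term of the minimal injective resolution, namely via $0\to\hom_\gg(V_\mu,V_\lambda)\to\hom_\gg(V_\mu,I_\lambda)\to\hom_\gg(V_\mu,I_\lambda/V_\lambda)\to\ext^1_\gg(V_\mu,V_\lambda)\to 0$ (using $\ext^1_\gg(V_\mu,I_\lambda)=0$). Since $V_\mu\neq V_\lambda$ and $V_\mu$ is not a subquotient of $\soc(I_\lambda)=V_\lambda$ in a way that splits off, $\hom_\gg(V_\mu,I_\lambda)=0$ when $\mu\neq\lambda$, so $\ext^1_\gg(V_\mu,V_\lambda)\cong\hom_\gg(V_\mu,I_\lambda/V_\lambda)$. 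It then remains to show this Hom-space has dimension $2^\ZZ=\beth_1$: the upper bound $\leq\beth_1$ follows because $I_\lambda/V_\lambda$ has finite Loewy length with simple subquotients among finitely many $V_\nu$, each occurring with multiplicity at most $\beth_1$ (one sees this from $I_\lambda=((V_\lambda)_*)^*$ being a quotient-type dual of a countable-dimensional module, as in Examples 6.2 and 6.14); the lower bound $\geq\beth_1$ comes from producing $\beth_1$ linearly independent non-split extensions, e.g. by varying the gluing data $\{t_i\}$ in the construction above over a continuum of inequivalent choices, or more cleanly by exhibiting a submodule of $I_\lambda/V_\lambda$ isomorphic to $\bigoplus_{\beth_1}V_\mu$ using the $\beth_1$-fold multiplicity of $V_\mu$ in $I_\lambda$ established analogously to the $F_r/F\simeq\bigoplus_{2^\ZZ}V_q$ computation in Example 6.2 c).

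The main obstacle I expect is the exact multiplicity $2^\ZZ$ rather than merely ``nonzero and at most $2^\ZZ$'': getting the lower bound requires either a careful count of inequivalent gluings or an explicit continuum-sized family of submodules of $I_\lambda/V_\lambda$, and one must be sure these are genuinely linearly independent in $\ext^1$, not merely nonzero. The cleanest route is probably to first prove, by the same dualization-and-$\Gamma_\gg$ bookkeeping used throughout Section 6, that for a covering pair $\mu\lessdot\lambda$ the multiplicity of $V_\mu$ in $\soc^1(I_\lambda)/V_\lambda$ is exactly $\beth_1$, and then invoke that $\ext^1_\gg(V_\mu,V_\lambda)$ equals precisely that multiplicity space; the general case $\mu<\lambda$ (not covering) reduces to a covering step at the ``top'' of a chain since any extension of $V_\mu$ by $V_\lambda$ factors through $\soc^1(I_\lambda)$ and the relevant $V_\mu$ already appears at the first socle layer above $V_\lambda$ only when the step is covering — so in fact $\ext^1_\gg(V_\mu,V_\lambda)\neq 0$ forces $l(\lambda,\mu)=1$ as well, and one should double-check the order-theoretic claim that $\mu<\lambda$ with $\ext^1\neq 0$ implies $\mu$ is covered by $\lambda$, which follows from the socle-layer analysis of $I_\lambda$ via Proposition \ref{linj}.
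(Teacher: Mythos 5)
There is a genuine gap, and it sits precisely in your treatment of the direction $\mu<\lambda\Rightarrow\ext^1_\gg(V_\mu,V_\lambda)\neq 0$. Your reduction to covering pairs is both unnecessary and, as you push it in your last paragraph, leads you to a false conclusion: you end by suggesting that $\ext^1_\gg(V_\mu,V_\lambda)\neq 0$ should force $l(\lambda,\mu)=1$, which contradicts the very statement being proved (non-vanishing for \emph{all} $\mu<\lambda$) and also Proposition \ref{socfilt} for $k=1$, which says that every $\mu<\lambda$ occurs in $\soc^1(I_\lambda)/\soc(I_\lambda)$. A concrete counterexample to your closing claim: for $\gg=sl(\infty)$, Example 6.2 c) exhibits trivial constituents in $\soc^1(\gg^*)/\soc(\gg^*)$, i.e. $\ext^1_\gg(\CC,\gg)\neq 0$, although $0<\varepsilon_1<$ (highest weight of the adjoint module), so this is not a covering relation. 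Moreover, the ``propagation upward via the surjections of Lemma \ref{surj}'' step is circular: the surjection $I_\lambda\to I_\nu$ used in Proposition \ref{linj} is itself produced from a non-vanishing $\ext^1$, and even granted a chain of such surjections one only controls Loewy length, not the occurrence of $V_\mu$ in the \emph{first} socle layer of $I_\lambda$, which is exactly what $\ext^1_\gg(V_\mu,V_\lambda)\neq 0$ amounts to. The fix is simply to drop the reduction: the gluing construction you describe (which is the paper's) works verbatim for an arbitrary pair $\mu<\lambda$, because the definition of the order supplies an infinite sequence $i_1<i_2<\dots$ with $\hom_{\gg_{i_j}}(V^{i_j}_\mu,V^{i_{j+1}}_\lambda)\neq 0$, and the resulting limit module is a non-split extension of $V_\mu$ by $V_\lambda$ whenever infinitely many of the chosen maps $\phi_j$ are non-zero.

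The remaining parts are essentially sound. Your converse direction (embed the extension into $I_\lambda$, read off which $\gg_i$-types occur in $((V_\lambda)_*)^*$) works and differs from the paper, which argues more directly: if $\hom_{\gg_i}(V^i_\mu,V^j_\lambda)=0$ for all $j>i$ for suitable $i$, then $\dim\hom_{\gg_i}(V^i_\mu,X)=1$ and the $\gg$-submodule generated by the image of $V^i_\mu$ splits the sequence. Your upper bound $\dim\ext^1_\gg(V_\mu,V_\lambda)\leq 2^\ZZ$ via the multiplicity in $\soc^1(I_\lambda)/\soc(I_\lambda)$ and $\dim I_\lambda=2^\ZZ$ agrees with the paper. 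The lower bound, which you flag as the main obstacle, is exactly where the unreduced construction pays off: since the extension is non-split as soon as infinitely many $\phi_j\neq 0$, varying the choice (e.g. the support or scalars) of the sequence $(\phi_j)$ over $2^\ZZ$ possibilities produces $2^\ZZ$ linearly independent classes, because any non-trivial finite linear combination is again an extension of the same glued form with infinitely many non-zero structure maps, hence non-split. So the proposal is repairable, but as written the forward direction rests on an incorrect order-theoretic claim and an unsupported propagation step, and the dimension count is left incomplete.
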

\begin{proof} Assume that there is a non-trivial extension
\begin{equation}\label{eq6}
0\to V_\lambda\to X\to V_\mu\to 0.\end{equation} We will show that
$\mu<\lambda$. Let, on the contrary,
$\hom_{\gg_i}(V^i_\mu,V^j_\lambda)=0$ for all $j>i$. Then
$\hom_{\gg_i}(V^i_\mu,V_\lambda)=0$. Since
$\dim\hom_{\gg_i}(V^i_\mu,V_\mu)=1$, we have
$\dim\hom_{\gg_i}(V^i_\mu,X)=1$. Let $\phi:V^i_\mu\to X$ be a
non-zero homomorphism. Then $U(\gg)\cdot\phi(V^i_\mu)\simeq X$.
Therefore $\phi$ extends to a homomorphism of $\gg$-modules
$V_\mu\to X$, and this yields a splitting of the sequence in
\refeq{eq6}. Thus, $\ext^1_\gg(V_\mu,V_\lambda)\neq 0$ implies
$\mu<\lambda$.

Now let $\mu<\lambda$. Then there exists an infinite sequence
$i_1,i_2,\dots$ such that $\hom_{\gg_{i_j}}(V^{i_j}_\mu,
V^{i_{j+1}}_\lambda)\neq 0$ for all $j$. Consider a sequence of
non-zero homomorphisms $\phi_j\in
\hom_{\gg_{i_j}}(V^{i_j}_\mu,V^{i_{j+1}}_\lambda)$ and set
$Z_j:=V^{i_j}_\mu\oplus V^{i_j}_\lambda$. Denote by $e_j$
(respectively, $f_j$) the inclusion $V^{i_j}_\mu\to
V^{i_{j+1}}_\mu$ (resp., $V^{i_j}_\lambda\to
V^{i_{j+1}}_\lambda$). Define $\psi_j:Z_j\to Z_{j+1}$ by
$$\psi(x,y)=(e_j(x),\phi_j(x)+f_j(y)),\,\,x\in V^{i_j}_\mu, y\in V^{i_{j}}_\lambda.$$
Consider $Z=\underrightarrow{\lim} Z_j$. It is an exercise to
check that $Z$ is an extension of $V_\mu$ by $V_\lambda$, and it
does not split if infinitely many $\phi_j\neq 0$. Hence the
dimension of $\ext^1_\gg(V_\mu,V_\lambda)$ is at least $2^\ZZ$. On
the other hand, the dimension of $\ext^1_\gg(V_\mu,V_\lambda)$ is
bounded by the multiplicity of $V_\mu$ in
$\soc^1(I_\lambda)/\soc(I_\lambda)$. The dimension of
$I_\mu=((V_\mu)_*)^*$ is $2^\ZZ$, hence the dimension of
$\ext^1_\gg(V_\mu,V_\lambda)$ is at most $2^\ZZ$.

To finish the proof just note that
$\ext^1_\gg(V_\lambda,V_\lambda)=0$ by Lemma \ref{injhull}.
\end{proof}
\begin{corollary}
The category $\lind_\gg$ consists of a single block.
\end{corollary}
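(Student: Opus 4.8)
The plan is to show that $\lind_\gg$ cannot be written as a product $\mathcal A\times\mathcal B$ of two non-zero full abelian subcategories, which is precisely the assertion that it is a single block. So suppose $\lind_\gg\simeq\mathcal A\times\mathcal B$ with both factors non-zero. First I would record two standard observations. (i) Since every object of $\lind_\gg$ has finite Loewy length, every non-zero object has non-zero socle, hence contains a simple submodule; since every simple object of a product category lies in one of the two factors, both $\mathcal A$ and $\mathcal B$ contain a simple object. (ii) If $M,N$ are simple objects with $\ext^1_\gg(M,N)\neq 0$, then $M$ and $N$ belong to the same factor: a non-split extension $0\to N\to E\to M\to 0$ splits as $E=E_{\mathcal A}\oplus E_{\mathcal B}$, the simple submodule $N$ lies inside $E_{\mathcal A}$ or $E_{\mathcal B}$, say $N\subseteq E_{\mathcal A}$, and if $M$ lay in $\mathcal B$ then the quotient map $E\to M$ would annihilate $E_{\mathcal A}$, forcing $E_{\mathcal A}=N$ and a splitting --- a contradiction. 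Granting (i) and (ii), it suffices to connect every simple object of $\lind_\gg$ to the trivial module $\CC$ by a finite chain of simple objects, consecutive ones having a non-zero $\ext^1$ in one direction or the other: this puts every simple object in the same factor as $\CC$, so the other factor contains no simple object, contradicting (i).

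To build such chains I would use that the simple objects of $\lind_\gg$ are exactly the simple tensor modules $V_\lambda$, $\lambda\in\Theta$ (with $\CC=V_0$), and argue by induction on $|\lambda|$. For $|\lambda|=0$ there is nothing to prove. For $|\lambda|\geq 1$, exactly as in the proof of \refprop{linj} one finds $\mu\in\Theta$ with $|\mu|=|\lambda|-1$ and $\hom_{\gg_i}(V^i_\mu,V^{i+1}_\lambda)\neq 0$ for all sufficiently large $i$; by the definition of the order on $\Theta$ this means $\mu<\lambda$, so $\ext^1_\gg(V_\mu,V_\lambda)\neq 0$ by \refle{extsimp}. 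By the induction hypothesis $V_\mu$ is connected to $\CC$, and prefixing the edge $\{V_\lambda,V_\mu\}$ connects $V_\lambda$ to $\CC$. This completes the induction and hence the proof.

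The main obstacle is not in the bookkeeping above but in the one external input: that for every $\lambda\in\Theta$ with $|\lambda|\geq 1$ there is $\mu\in\Theta$ with $\mu<\lambda$ and $|\mu|<|\lambda|$ --- equivalently, that $0$ is the minimum of $(\Theta,\leq)$. This is the branching statement already invoked in the proof of \refprop{linj} (for large $i$ the $\gg_i$-module $V^{i+1}_\lambda$ has a constituent $V^i_\mu$ with $|\mu|=|\lambda|-1$), and I would simply cite it; verifying it from scratch amounts to unwinding the branching rules for $sl(i)\subset sl(i+1)$, $o(2i+1)\subset o(2i+3)$ and $sp(2i)\subset sp(2i+2)$ as in the proof of \refth{slind}. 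The only subtlety on the categorical side is to be sure that the reduction in observation (i) uses finite Loewy length (which guarantees a simple submodule) rather than finite length, which fails here.
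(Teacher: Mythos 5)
Your proof is correct and takes essentially the same route as the paper: both reduce the claim to \refle{extsimp} together with connectivity of the Ext-graph of the simple tensor modules through $\CC$. The only difference is that the paper links every $V_\mu$ to $\CC$ in a single step, using that $0<\mu$ for all $\mu\in\Theta$ and hence $\ext^1_\gg(\CC,V_\mu)\neq 0$, whereas you build a chain by induction on $|\lambda|$ via the branching fact invoked in the proof of \refprop{linj}, and you spell out the block-decomposition formalism that the paper leaves implicit.
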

\begin{proof}
According to Lemma \ref{extsimp}, $\ext_\gg^1(\mathbb{C},V_\mu)\neq 0$ for
any $\mu\in\Theta$.
\end{proof}
\begin{prop}\label{socfilt} For $k\in\ZZ_{>0}$, set
$$\Theta^k(\lambda)=\{\mu<\lambda | l(\lambda,\mu)\geq k+1\}.$$
Then
$$\soc^k(I_\lambda)/\soc^{k-1}(I_\lambda)=\bigoplus_{\mu\in\Theta^k(\lambda)}X^\mu\otimes
V_\mu,$$ where each $X^\mu$ is a trivial $\gg$-module of dimension
$2^\ZZ$.
\end{prop}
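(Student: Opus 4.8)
The plan is to identify the socle layers of $I_\lambda$ by combining the Loewy-length formula of Proposition \ref{linj} with the $\ext^1$ computation of Lemma \ref{extsimp} and the already-established constraint on which simple modules occur in $I_\lambda$ (Lemma \ref{injhull}: every $V_\mu$ occurring, other than $V_\lambda$ in the socle, satisfies $|\mu|<|\lambda|$, and more precisely $\mu<\lambda$). First I would argue that every simple subquotient $V_\mu$ of $I_\lambda$ satisfies $\mu\leq\lambda$: indeed $\hom_{\gg_i}(V_\mu^i,(I_\lambda))\neq 0$ forces, via the structure of $I_\lambda=((V_\lambda)_*)^*$ and Lemma \ref{injhull}, the existence of $j>i$ with $\hom_{\gg_i}(V_\mu^i,V_\lambda^j)\neq 0$, which is precisely $\mu\leq\lambda$. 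So all the relevant $\mu$ lie in the finite interval $[\,\cdot\,,\lambda]$ of $\Theta$.

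Next I would prove the inclusion $\soc^{k-1}(I_\lambda)\supseteq$ (everything) $\Leftrightarrow \mu\in\Theta^{\leq k}(\lambda)$ by a two-sided induction on $k$. For the ``no earlier'' direction: if $V_\mu$ appeared in $\soc^{k-1}(I_\lambda)$ with $l(\lambda,\mu)\geq k+1$, pick a maximal chain $\mu<\mu_1<\dots<\mu_r=\lambda$ with $r\geq k+1$; using Lemma \ref{extsimp} that $\ext^1_\gg(V_{\mu_t},V_{\mu_{t+1}})\neq 0$ exactly when $\mu_t<\mu_{t+1}$, together with injectivity of $I_\lambda$ and the fact (Lemma \ref{injhull}, Corollary \ref{brick}) that $I_\lambda$ is the injective hull of $V_\lambda$, one builds a submodule of $I_\lambda$ of Loewy length $\geq r+1>|\lambda|+1$, contradicting Proposition \ref{linj}. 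Equivalently, and perhaps more cleanly, one shows by downward induction that the quotient $I_\lambda/\soc^{k-1}(I_\lambda)$ has socle containing every $V_\mu$ with $l(\lambda,\mu)=k+1$: any such $\mu$ admits a chain of length $k$ below $\lambda$ wait—above; refine to a maximal chain and use that each $\ext^1$ is nonzero to propagate the module up through the socle filtration, forcing $V_\mu$ into layer exactly $k$ and not earlier (else the chain length would be too small, again by Lemma \ref{extsimp} read as "consecutive socle layers can only be glued along $\ext^1$, i.e. along $<$"). For the ``occurs by layer $k$'' direction: again induct, using that $I_\lambda/\soc^{k-1}(I_\lambda)$ is itself an object of $\lind_\gg$ whose socle, by injectivity arguments and Lemma \ref{surj}, captures every $V_\mu$ that can be glued to $\soc^{k-1}(I_\lambda)$ via $\ext^1$, which by Lemma \ref{extsimp} is exactly those $\mu$ one step down in the order.

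Finally I would pin down the multiplicity $X^\mu$. The dimension bound $\dim\ext^1_\gg(V_\mu,V_\lambda)=2^\ZZ$ from Lemma \ref{extsimp} generalizes: for $\mu\in\Theta^k(\lambda)$ the multiplicity of $V_\mu$ in $\soc^k(I_\lambda)/\soc^{k-1}(I_\lambda)$ is controlled above by $\dim\hom_\gg(I_\mu, I_\lambda)\cdot(\text{something})$; more directly, since $I_\mu=((V_\mu)_*)^*$ has dimension $2^\ZZ$ and $I_\lambda$ is a submodule/quotient-related object of a fixed $(T^{p,q})^*$ (Corollary \ref{injtenscat} b)), the multiplicity is at most $2^\ZZ$, and the explicit non-split extensions constructed in the proof of Lemma \ref{extsimp} (one for each of a family of $2^\ZZ$ independent directions, realized inside $I_\lambda$ by injectivity and Lemma \ref{surj}) show it is at least $2^\ZZ$. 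Hence $X^\mu$ is a trivial module of dimension $2^\ZZ$, and the layer decomposes as $\bigoplus_{\mu\in\Theta^k(\lambda)}X^\mu\otimes V_\mu$ because distinct simple tensor modules have no extensions among themselves in the same layer (Lemma \ref{extsimp} again: $\ext^1_\gg(V_\mu,V_\mu)=0$ and $\ext^1$ is only nonzero across the strict order).

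I expect the main obstacle to be the ``not earlier'' direction — ruling out that some $V_\mu$ with $l(\lambda,\mu)\geq k+1$ sneaks into $\soc^{k-1}(I_\lambda)$. This is where one must use Proposition \ref{linj} in an essential way, turning a hypothetical too-early appearance into a submodule of $I_\lambda$ of excessive Loewy length via iterated application of Lemma \ref{glue} and Lemma \ref{surj} along a maximal chain. Packaging this chain-to-submodule construction cleanly, and making sure the Loewy length genuinely increases by one at each step (rather than possibly stalling), is the delicate point.
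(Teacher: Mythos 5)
Your plan founders on a misreading of the statement: you treat $l(\lambda,\mu)$ as pinning $V_\mu$ to a single socle layer, and your ``no earlier'' direction asserts that $V_\mu$ with $l(\lambda,\mu)\geq k+1$ cannot occur in $\soc^{k-1}(I_\lambda)$. But the sets $\Theta^k(\lambda)$ are defined with $l(\lambda,\mu)\geq k+1$ and are therefore nested, $\Theta^1(\lambda)\supset\Theta^2(\lambda)\supset\dots$, so the proposition itself says that such a $V_\mu$ \emph{does} occur in every layer up to $l(\lambda,\mu)-1$. This is not a fluke of the formulation: in Example 6.2~c), for $\lambda$ the highest weight of the adjoint module of $sl(\infty)$, trivial constituents (coming from the space $J$) already appear in $\soc^1(\gg^*)/\soc(\gg^*)$ even though $l(\lambda,0)=3$; they reappear in the top layer. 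So the claim you single out as the main delicate point is false, and the contradiction you propose to reach it -- a submodule of Loewy length $r+1>|\lambda|+1$ -- cannot arise anyway, since a maximal chain below $\lambda$ has at most $|\lambda|+1$ terms, which matches rather than exceeds the Loewy length given by Proposition \ref{linj}.

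The half of your argument that points in the right direction (the ``occurs by layer $k$'' part) is also left without its engine. The paper's proof rests on the characterization: $V_\mu$ is a constituent of $\soc^k(I_\lambda)/\soc^{k-1}(I_\lambda)$ if and only if there is a non-zero homomorphism $\phi:I_\lambda\to I_\mu$ with $\phi(\soc^{k-1}(I_\lambda))=0$; by Lemma \ref{surj} such $\phi$ is surjective, so layer $k$ of $I_\lambda$ picks up the first layer of $I_\psi$ for every $\psi\in\Theta^{k-1}(\lambda)$, and one inducts on $k$ with Lemma \ref{extsimp} as the base case (which also supplies the multiplicity $2^\ZZ$, the upper bound coming from $\dim I_\lambda=2^\ZZ$). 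You gesture at ``gluing via $\ext^1$'' and at Lemma \ref{surj}, but without formulating this surjection criterion neither the occurrence of every $\mu\in\Theta^k(\lambda)$ in layer $k$ nor the multiplicity statement for $k>1$ is actually established. The constituent-by-constituent upper bound (each layer-$k$ constituent is $<$ some layer-$(k-1)$ constituent, hence lies in $\Theta^k(\lambda)$) does follow from Lemma \ref{extsimp} as you suggest, but that is the easy half; the rest of the proof needs to be rebuilt along the lines above rather than via the excessive-Loewy-length argument.
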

\begin{proof} For $k=1$ the statement follows from Lemma  \ref{extsimp}.
Now we proceed by induction on $k$. Note first that if $V_\mu$ is
a simple constituent of $\soc^k(I_\lambda)/\soc^{k-1}(I_\lambda)$,
then, by \refle{extsimp}, $\mu<\chi$ for some simple constituent
$V_\chi$ of $\soc^{k-1}(I_\lambda)/\soc^{k-2}(I_\lambda)$. By the
induction assumption, $\chi\in\Theta^{k-1}(\lambda)$. In addition,
it is clear that $V_\mu$ is a simple constituent of
$\soc^k(I_\lambda)/\soc^{k-1}(I_\lambda)$ if and only if there
exists a non-zero homomorphism $\phi:I_\lambda\to I_\mu$, such
that $\phi(\soc^{k-1}(I_\lambda))=0$. By Lemma \ref{surj}, $\phi$
is surjective, so all simple constituents of
$\soc^1(I_\mu)/\soc(I_\mu)$ are also simple constituents of
$\soc^k(I_\lambda)/\soc^{k-1}(I_\lambda)$. This implies that
$V_\mu$ is a simple constituent of
$\soc^k(I_\lambda)/\soc^{k-1}(I_\lambda)$ if and only if there
exists $\psi\in\Theta^{k-1}(\lambda)$ such that
$\mu\in\Theta^1(\psi)$. Since $\mu\in\Theta^1(\psi)$ if and only
if $\mu\in\Theta^k(\lambda)$, the statement follows.
\end{proof}

Let $\tens_\gg$ be the full subcategory of $\lind_\gg$ consisting of
modules $M$ whose cardinality $\mathrm{card}M$ is
bounded by $\beth_n$ for some $n$ depending on $M$.

\begin{theo}\label{cardinality} $\tens_\gg$ is the unique minimal abelian full subcategory of $\Int_\gg$ which does not consist of trivial modules only and which is closed under $\otimes$ and $^*$.
\end{theo}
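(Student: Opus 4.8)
The plan is to prove the two halves of the characterization separately. Throughout I take an \emph{abelian full subcategory} of $\Int_\gg$ to be a full subcategory closed under subobjects, quotient objects and finite direct sums, so that it is abelian with exact inclusion. \emph{That $\tens_\gg$ belongs to the family} is the easy half: $\tens_\gg$ is the class of objects of $\lind_\gg$ of cardinality $\le\beth_n$ for some $n$, and since $\lind_\gg$ is closed under subquotients and finite direct sums, and these operations do not increase cardinality, $\tens_\gg$ is an abelian full subcategory of $\Int_\gg$; it contains the non-trivial module $V=T^{1,0}\in\lind_\gg$ (\refle{tp}). It is closed under $\otimes$ because $\lind_\gg$ is a tensor category and $\card(M\otimes M')\le\max(\card M,\card M')$ for infinite-dimensional $M,M'$, and closed under $^*$ because $M^*\in\lind_\gg$ when $M\in\lind_\gg$ while $\card(M^*)\le 2^{\card M}$, so an object of cardinality $\le\beth_n$ has dual of cardinality $\le\beth_{n+1}$.

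For the other half, let $\mathcal C\subseteq\Int_\gg$ be an abelian full subcategory closed under $\otimes$ and $^*$ with a non-trivial object; I must show $\tens_\gg\subseteq\mathcal C$. First, $\mathcal C$ is closed under direct summands, because a summand of $A\in\mathcal C$ is the image of an idempotent endomorphism of $A$. Grant for the moment that $V$ and $V_*$ lie in $\mathcal C$ (the crux, treated in the last paragraph); then every $T^{p,q}=V^{\otimes p}\otimes V_*^{\otimes q}$, and every iterated algebraic dual of a $T^{p,q}$, lies in $\mathcal C$. Next, each indecomposable injective $I_\lambda=((V_\lambda)_*)^*$ lies in $\mathcal C$: the simple tensor module $(V_\lambda)_*$ embeds in some $T^{q,p}$ with $q+p=|\lambda|$, and dualizing gives a surjection $(T^{q,p})^*\twoheadrightarrow I_\lambda$; by \refprop{inj} and the integrability of $(T^{q,p})^*$ this module is injective in $\lind_\gg$, hence by \refth{injfil} together with the splitting of extensions of injectives it is a direct sum of modules $I_\mu$, necessarily with $|\mu|\le q+p$ (\refle{tp}); some such summand surjects onto $I_\lambda$ (\refle{surj}), and if it were $I_\mu$ with $\mu\neq\lambda$ then $V_\mu=\soc(I_\mu)$ would lie in the kernel, so $I_\lambda$ would be a quotient of $I_\mu/\soc(I_\mu)$, forcing $|\lambda|+1\le|\mu|$ by \refprop{linj}, contrary to $|\mu|\le|\lambda|$; hence $I_\lambda$ is a direct summand of $(T^{q,p})^*$ and so lies in $\mathcal C$. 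Moreover the trivial module $V^*/V_*=V^*/\soc(V^*)$, of cardinality $\beth_1$ (Example 6.2(a)), is the cokernel of the canonical map $V_*\hookrightarrow V^*$, hence lies in $\mathcal C$; iterating $^*$, taking finite direct sums and summands then puts $\CC$ and a trivial module of every cardinality $\le\beth_n$ into $\mathcal C$.

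With this in hand, every simple tensor module $V_\lambda$ lies in $\mathcal C$, by induction on $|\lambda|$: given $I_\lambda\in\mathcal C$, the start $0\to V_\lambda\to I_\lambda\stackrel{d}{\to}I_\lambda^1$ of a minimal injective resolution presents $I_\lambda^1$ as the injective hull of $I_\lambda/\soc(I_\lambda)$, whose simple constituents are the $V_\mu$ with $|\mu|<|\lambda|$ (\refle{injhull}); there are finitely many such $\mu$ and each occurs with multiplicity $\le\beth_1$ (\refle{extsimp}), so $I_\lambda^1$ is a direct summand of a finite direct sum of modules $I_\mu\otimes W$ with $W$ trivial of dimension $\beth_1$, all lying in $\mathcal C$ by the induction hypothesis and the previous paragraph; hence $I_\lambda^1\in\mathcal C$ and $V_\lambda=\ker d\in\mathcal C$. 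Finally, an arbitrary $M\in\tens_\gg$ has a finite injective resolution $0\to M\to J^0\to\cdots\to J^r\to 0$ in $\lind_\gg$ (\refcor{resolution}); each $J^i$ is an injective object of $\lind_\gg$, hence a direct sum of $I_\mu$'s, and being of bounded cardinality with only finitely many simple subquotient types (\refth{subquotients}) is a direct summand of a finite direct sum of modules $I_\mu\otimes W$ with $W$ a suitable trivial module, so $J^i\in\mathcal C$; therefore $M=\ker(J^0\to J^1)\in\mathcal C$. This proves $\tens_\gg\subseteq\mathcal C$, and together with the first half shows that $\tens_\gg$ is the (necessarily unique) minimal member of the family.

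The main obstacle is the claim that $V,V_*\in\mathcal C$. Since $\gg$ is perfect, a non-trivial module has a non-trivial simple subquotient, so by closure under subquotients $\mathcal C$ contains a non-trivial simple module $S$; as $S,S^*,S^{**},\dots$ all lie in $\mathcal C\subseteq\Int_\gg$, the module $S\otimes S^*\in\mathcal C$ is non-trivial with integrable iterated duals (\refcor{tendual}), and the finiteness statement of \refle{crit} lets one show that the adjoint module $\gg$ occurs as a subquotient of $S\otimes S^*$ (roughly: the $\gg$-equivariant map $\gg\to\End_\CC S$ has $\gg$ as a subquotient of its intersection with the finite-rank part $S\otimes S^*$). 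Hence $\gg\in\mathcal C$, so $\gg^*\in\mathcal C$, and a branching computation exhibits $V$ and $V_*$ as subquotients of $\gg^*$. (An alternative route is to first verify that $\Gamma_\gg(S^*)$ has finite Loewy length, conclude by \refth{slind} that $S\cong V_\lambda$ is a simple tensor module, and then descend along iterated algebraic duals using \refle{injhull} until reaching a simple tensor module with $|\lambda|=1$, recovering the other of $V,V_*$ as the socle of its dual.) Everything else in the argument is bookkeeping with cardinal arithmetic, socle filtrations, and the structure of injective modules established in Sections~5 and~6.
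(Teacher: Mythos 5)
Your overall skeleton for the hard direction (get $V$ and $V_*$ into $\mathcal C$, then the $T^{p,q}$, the simple tensor modules, the injectives, and finally all of $\tens_\gg$ by cardinality bookkeeping) is essentially the paper's, and your easy half (that $\tens_\gg$ itself is an abelian full subcategory closed under $\otimes$ and $^*$) is fine, if left implicit in the paper. But the crux --- $V,V_*\in\mathcal C$ --- is exactly where your argument breaks. You claim $\gg$ occurs as a subquotient of $S\otimes S^*$ because the image of $\gg\to\End_\CC S$ meets the finite-rank part $S\otimes S^*$; since that image is a copy of the simple module $\gg$, this amounts to claiming that every element of $\gg$ acts on $S$ by a finite-rank operator, which is false in general. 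The hypotheses on $\mathcal C$ only say $\mathcal C\subseteq\Int_\gg$ is closed under $\otimes$ and $^*$ --- there is no Loewy length restriction --- so $S$ need not be a tensor module: for $\gg=o(\infty)$ the spinor module $S(\{t_i\})$ of Example \ref{ex38} b) has all iterated duals integrable (Lemma \ref{crit}, Corollary \ref{tendual}), yet its restriction to each $\gg_i$ is an infinite direct sum of non-trivial modules, so every non-zero element of $\gg_i$ acts with infinite rank and the image of $\gg$ in $\End_\CC S$ meets $S\otimes S^*$ trivially. Your parenthetical alternative fails for the same reason: you cannot ``first verify that $\Gamma_\gg(S^*)$ has finite Loewy length'', since by Theorem \ref{slind} that would force $S$ to be a simple tensor module, which the spinor module is not (and analogous ``semi-infinite'' simple modules with integrable iterated duals exist for $sl(\infty)$). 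So as written your proof does not cover all admissible $\mathcal C$.

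The missing ingredient is the paper's small but essential trick: instead of the finite-rank operators, use the full endomorphism algebra and the embedding $\End_\CC M\subseteq (M^*\otimes M)^*$, $\phi\mapsto(\psi\otimes m\mapsto\psi(\phi(m)))$. Since $\mathcal C$ is closed under $\otimes$, $^*$ and subobjects, $(M^*\otimes M)^*\in\mathcal C$, and for any non-trivial $M$ the adjoint module $\gg$ embeds in $\End_\CC M$, so $\gg\in\mathcal C$; then $V$ is a subquotient of $\gg^*$, $V_*=\soc(V^*)$, and from there your build-up of the $T^{p,q}$, the $V_\lambda$, trivial modules of dimension $\leq\beth_n$, and the embedding $M\subseteq(\soc(M)_*)^*$ of Corollary \ref{injtenscat} a) completes the argument much as you describe. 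A further caution on a secondary point: your assertion that $(T^{q,p})^*$ is a direct sum of modules $I_\mu$ is not established in the paper --- Theorem \ref{injfil} only provides a filtration, and injectivity of infinite direct sums of the $I_\mu$ is not proved --- but that detour is unnecessary: once every simple tensor module $(V_\lambda)_*$ is known to lie in $\mathcal C$ (as a submodule of some $T^{q,p}$), one gets $I_\lambda=((V_\lambda)_*)^*\in\mathcal C$ directly from closure under $^*$.
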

\begin{proof} Let $\mathcal C$ be a minimal abelian full subcategory of $\Int_\gg$ which contains a non-trivial module $M$ and is closed under $\otimes$ and $^*$.
We will show that $V\in\mathcal C$. Since $\End_{\CC}M$ is a
$\gg$-submodule of $(M^*\otimes  M)^*$ (through the map
$\phi(\psi\otimes m)=\psi(\phi(m))$ for $m\in M,\,\psi\in
M^*,\,\phi\in\End_\CC M$), we have $\End_{\CC}M\in\mathcal C$.
Furthermore, the adjoint module $\gg$ is a
  submodule of $\End_{\CC}M$. Hence $\gg\in\mathcal C$. Recall that
  $\gg$ is the socle of $V_*\otimes V$ for $sl(\infty)$, of $\Lambda^2(V)$
  for $o(\infty)$, and of $S^2(V)$ for $sp(\infty)$. In all cases it is
  easy to see that $\gg^*$ contains a subquotient isomorphic to $V$.
  Therefore $V\in\mathcal C$. In addition, $V_*=\soc(V^*)\in\mathcal C$.
  Therefore $T^{p,q}\in\mathcal C$ for all $p,q$, and $V_\lambda\in\mathcal
  C$ for all $\lambda\in\Theta$. Finally, by \refcor{injtenscat} a), any $M\in\lind_\gg$ is a
  submodule of $(\soc(M)_*)^*$, and the
  statement follows.
\end{proof}

We conclude this paper with the remark that the category
$\lind_\gg$, for $\gg=sl(\infty), o(\infty), sp(\infty)$, is
functorial with respect to any  homomorphism of locally semisimple
Lie algebras $\phi:\gg'\to\gg$. By this we mean that any
$M\in\lind_{\gg}$ considered as a 
$\gg'$-module is an object of
$\lind_{\gg}$.

To prove this, recall that the image of $\phi'$, being a locally
semisimple subalgebra of $\gg$, is isomorphic to a direct sum of
copies of $sl(\infty), o(\infty), sp(\infty)$ and of
finite-dimensional simple Lie algebras, [DP2]. Furthermore,
the result of [DP2] implies that as
$\gg'$-modules both $V$ and $V_*$ have Loewy length at most 2 and
that all non-trivial simple constituents of $V$ and $V_*$ are
isomorphic to the natural and conatural representations
$V_{\mathfrak  s}$ and $(V_{\mathfrak  s})_*$ for some simple
direct summands $\mathfrak s$ of $\phi(\gg')$ and that all non-trivial
constituents occur with
finite multiplicity. (The simple trivial representation may occur
with up to countable multiplicity in both $\soc(V)$ and
$V/\soc(V)$ (respectively, $\soc(V_*)$ and $V_*/\soc(V_*)$.)
This allows us to conclude that any single simple object of
$\lind_\gg$ is an object of $\lind_{\phi(\gg')}$. Hence, by Theorem \ref{subquotients},
any $M\in\lind_\gg$ is an object of $\lind_{\phi(\gg')}$.

\bigskip

\textbf{References}

\bigskip
[Ba1] A. Baranov, Complex finitary simple Lie algebras, Archiv der Math. 71(1998), 1-6.

\bigskip

[Ba2] A. Baranov, Simple diagonal locally finite Lie algebras,  Proc. London Math. Soc. (3)  77  (1998),  no. 2, 362-386

\bigskip

[Ba3] A. Baranov, Finitary simple Lie algebras, J. Algebra 219(1999), 299-329.

[BZh] A. Baranov, A. Zhilinski, Diagonal direct limits of simple Lie algebras, Comm.
Algebra 27 (1998), 2749-2766.

\bigskip

[DP] I. Penkov, I. Dimitrov, Weight modules of direct limit Lie algebras, IMRN 1999, no. 5, 223-249.

\bigskip

[GW] R. Goodman, N. Wallach, Representations and Invariants of the Classical Groups,
Cambridge University Press, 1998.

\bigskip

[PS] I. Penkov, K. Styrkas, Tensor representations of infinite-dimensional root-reductive Lie algebras, in Trends and Developments in Infinite Dimensional Lie Theory, Birkhauser, to appear.

\bigskip

[PStr] I. Penkov, H. Strade, Locally finite Lie algebras with root
decomposition, Archiv der Math. 80 (2003), 478-485.

\bigskip

[W] C. Weibel, An introduction to homological algebra, Cambridge Studies in Advanced Mathematics, Cambridge University Press, 1994.
\end{document}